\newtheorem{thm}{Theorem}
\newtheorem{prop}[thm]{Proposition}
\newtheorem{lem}[thm]{Lemma}
\newtheorem{cor}[thm]{Corollary}
\newtheorem{rem}[thm]{Remark}
\renewcommand{\epsilon}{\varepsilon}
\renewcommand{\phi}{\varphi}
\renewcommand{\deg}{\operatorname{deg}}
\newcommand{\BB}{\mathbb}
\newcommand{\g}{\mathfrak}
\newcommand{\separate}{\vskip5pt}
\newcommand{\re}{\operatorname{Re}}
\newcommand{\tr}{\operatorname{Tr}}
\newcommand{\B}{\overline}
\newcommand{\HC}{\BB H_{\BB C}}
\newcommand{\degt}{\widetilde{\operatorname{deg}}}
\newcommand\textcyr[1]{{\fontencoding{OT2}\fontfamily{wncyr}\selectfont #1}}
\newcommand{\Zh}{\textit{\textcyr{Zh}}}
\begin{document}

\title{\bf The Two-Loop Ladder Diagram and \\ Representations of $U(2,2)$}
\author{Matvei Libine}
\maketitle

\begin{abstract}
Feynman diagrams are a pictorial way of describing integrals predicting
possible outcomes of interactions of subatomic particles in the context of
quantum field physics. It is highly desirable to have an intrinsic
mathematical interpretation of Feynman diagrams, and in this article we
find the representation-theoretic meaning of a particular kind of Feynman
diagrams called the two-loop ladder diagram.
This is done in the context of representations of a Lie group $U(2,2)$,
its Lie algebra $\mathfrak{u}(2,2)$ and quaternionic analysis.
The results and techniques developed in this article are used in \cite{L}
to provide a mathematical interpretation of all conformal
four-point integrals -- including those described by the $n$-loop ladder
diagrams -- in the context of representations $U(2,2)$
and quaternionic analysis. Moreover, this representation-quaternionic model
produces a proof of ``magic identities'' in the Minkowski metric space.

No prior knowledge of physics or Feynman diagrams is assumed from the reader.
We provide a summary of all relevant results from quaternionic analysis to
make the article self-contained.
\end{abstract}

\noindent
{\bf MSC:} 22E70, 81T18, 30G35, 53A30.

\noindent
{\bf Keywords:} Feynman diagrams, conformal four-point integrals,
representations of $U(2,2)$, conformal geometry, quaternionic analysis.

\section{Introduction}

Feynman diagrams are a pictorial way of describing integrals predicting
possible outcomes of interactions of subatomic particles in the context of
quantum field physics. As the number of variables which are being integrated
out increases, the integrals become more and more difficult to compute.
But in the cases when the integrals can be computed, the accuracy of
their prediction is amazing.
Feynman diagrams are also very interesting objects from mathematical
perspective, and a number of mathematicians are trying to find their intrinsic
mathematical meaning, mostly in the setting of algebraic geometry.
See, for example, \cite{Mar} for a summary of these algebraic-geometric
developments as well as a comprehensive list of references.
On the other hand, Igor Frenkel has noticed that at least some
types of Feynman diagrams can be interpreted in the context of representation
theory and quaternionic analysis.
Thus, in \cite{FL1, FL3} we give natural identifications of the two
fundamental Feynman diagrams shown in Figure \ref{basic} with
projectors onto irreducible components of certain representations of $U(2,2)$
in the context of quaternionic analysis.
For example, the one-loop Feynman diagram is identified with the projection
onto the first irreducible component $(\rho_1,\Zh^+)$ in the decomposition of
the tensor product of two representations of $\mathfrak{u}(2,2)$
into irreducible subrepresentations:
\begin{equation}  \label{decomp-intro}
(\pi^0_l, {\cal H}^+) \otimes (\pi^0_r, {\cal H}^+) \simeq
\bigoplus_{n=1}^{\infty} (\rho_n,\Zh^+\otimes \BB C^{n \times n}),
\end{equation}
where ${\cal H}^+$ denotes the space of harmonic functions on the algebra of
quaternions $\BB H$ (see the discussion after Remark \ref{discussion}).
Then we raise a natural question of finding mathematical interpretation of
other Feynman diagrams in the same setting.

\begin{figure}
\begin{center}
\begin{subfigure}{0.3\textwidth}
\centering
\includegraphics[scale=1]{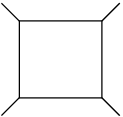}
\end{subfigure}
\begin{subfigure}{0.3\textwidth}
\centering
\includegraphics[scale=1]{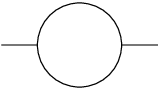}
\end{subfigure}
\end{center}
\caption{Feynman diagrams: the one-loop ladder diagram (left) and
the scalar vacuum polarization diagram (right).}
\label{basic}
\end{figure}

\begin{figure}
\begin{center}
\centerline{\includegraphics[scale=1]{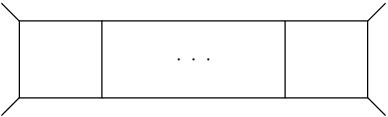}}
\end{center}
\caption{Conformal four-point box or ladder diagrams.}
\label{n-ladder}
\end{figure}

Conformal four-point box integrals play an important role in physics,
particularly Yang-Mills conformal field theory (see \cite{DHSS} and
references therein for more details).
These integrals are described by the box diagrams and have been thoroughly
studied by physicists.
For example, the integral described by the one-loop ladder diagram
is known to express the hyperbolic volume of an ideal tetrahedron
and is given by the dilogarithm function \cite{DD, W};
there are explicit expressions for the integrals described by the
ladder diagrams in terms of polylogarithms \cite{UD}.
Perhaps the most important property of the box integrals are the
``magic identities'' due to J.~M.~Drummond, J.~Henn, V.~A.~Smirnov
and E.~Sokatchev \cite{DHSS}.
These identities assert that all $n$-loop box integrals
for four scalar massless particles are equal to each other.
Thus we can parametrize the box integrals by the number of loops in the
diagrams and choose a single representative from the set of all $n$-loop
diagrams, such as the $n$-loop ladder diagram (Figure \ref{n-ladder}).

The original paper \cite{DHSS} gives a proof of magical identities for the
Euclidean metric case only and claims that the result is also true for the
Minkowski metric case.
In the Euclidean case, all variables belong to $\BB H$ and there are
no convergence issues whatsoever.
On the other hand, the Minkowski case (which is the case we consider)
is much more subtle.
In order to deal with convergence issues, we must
consider the so-called ``off-shell Minkowski integrals'' or
perturb the cycles of integration inside $\BB H \otimes_{\BB R} \BB C$.
Then the relative position of the cycles becomes very important.
In fact, choosing the ``wrong'' cycles typically results in integral being zero.

\begin{figure}
\begin{center}
\centerline{\includegraphics[scale=1]{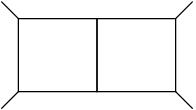}}
\end{center}
\caption{The two-loop ladder diagram.}
\label{2-ladder}
\end{figure}

In this paper we find the representation-theoretic meaning of the two-loop
ladder diagram (Figure \ref{2-ladder}) in the Minkowski metric case.
Thus, we associate to this diagram an integral operator $L^{(2)}$ on
${\cal H}^+ \otimes {\cal H}^+$, which is $\mathfrak{u}(2,2)$-equivariant.
We prove that the operator $L^{(2)}$ sends ${\cal H}^+ \otimes {\cal H}^+$
into itself and, in particular, that the result is a function of two variables
that is harmonic with respect to each variable, which is not at all obvious
from the construction. Then we show that if
$x \in {\cal H}^+ \otimes {\cal H}^+$ belongs to an irreducible component
isomorphic to $(\rho_n,\Zh^+ \otimes \BB C^{n \times n})$
in the decomposition (\ref{decomp-intro}), then
$$
L^{(2)}(x) = \mu_n x, \qquad \text{where} \qquad
\mu_n =
\begin{cases}
1 & \text{if $n=1$;} \\
\frac{(-1)^{n+1}}{n(n-1)} & \text{if $n \ge 2$.}
\end{cases}
$$
(Theorem \ref{main-thm}).
We also prove a certain non-obvious symmetry property for the operator $L^{(2)}$
(Lemma \ref{symmetry-lem}).
This property is a direct analogue of equation (8) in \cite{DHSS}
that is one of the ingredients of the proof of ``magic identities''.

In \cite{L} the results and techniques developed in this article are
extended to establish the ``magic identities'' for all conformal
four-point integrals in the Minkowski metric case. In particular, we
spell out the ``right'' choice of cycles of integration.
This is done by associating to each $n$-loop box integral an equivariant
operator $L^{(n)}$ on ${\cal H}^+ \otimes {\cal H}^+$ and computing the action
of $L^{(n)}$ on each irreducible component in the decomposition
(\ref{decomp-intro}).
It is reasonable to expect that an even larger class of Feynman diagrams
can be interpreted in the same context.

The paper is organized as follows. In Section \ref{preliminaries}
we establish our notations and state relevant results from quaternionic
analysis.
In Section \ref{Zh-decomp-section} we study the decomposition of a certain
representation  $(\varpi_2,\Zh)$ of $\mathfrak{u}(2,2)$ into irreducible
components (Theorem \ref{Zh2-decomposition} and
Proposition \ref{quotient-prop}).
These results are needed to establish that the operator $L^{(2)}$ is
$\mathfrak{u}(2,2)$-equivariant.
In Section \ref{fd-section} we describe the one- and two-loop ladder
integrals $l^{(1)}$ and $l^{(2)}$ represented by the one- and two-loop ladder
diagrams, then we introduce equivariant operators $L^{(1)}$ and $L^{(2)}$ on
${\cal H}^+ \otimes {\cal H}^+$ corresponding to those ladder integrals.
We also introduce auxiliary operators $\tilde L^{(2)}$ and $\mathring{L}^{(2)}$
closely related to $L^{(2)}$.
In Section \ref{I_R-section} we determine the action of the operator
$\tilde L^{(2)}$ by breaking it down as a composition of more elementary
operators and using their equivariance properties
(Proposition \ref{Lambda-prop} and Theorem \ref{main2-thm}).
Section \ref{main-section} contains our main result about the action of
the operator $L^{(2)}$ (Theorem \ref{main-thm}).
We essentially compute the action of $L^{(2)}$ on certain suitably chosen
generators of ${\cal H}^+ \otimes {\cal H}^+$ and reduce these calculations
to the ones already performed for $\tilde L^{(2)}$.
We also prove Lemma \ref{symmetry-lem} asserting a certain symmetry property
for the operator $L^{(2)}$.

The author was supported by the NSF grant DMS-0904612.

\section{Preliminaries}  \label{preliminaries}

In this section we establish notations and state relevant results from
quaternionic analysis. We mostly follow our previous papers \cite{FL1}
and \cite{FL2}.
A contemporary review of quaternionic analysis can be found in \cite{Su}.
Quaternionic analysis also has many applications in physics
(see, for instance, \cite{GT}).

\subsection{Complexified Quaternions $\HC$ and the Conformal Group $GL(2,\HC)$}

We recall some notations from \cite{FL1}.
Let $\HC$ denote the space of complexified quaternions:
$\HC = \BB H \otimes_{\BB R} \BB C$, it can be identified with the algebra of
$2 \times 2$ complex matrices:
$$
\HC = \BB H \otimes_{\BB R} \BB C \simeq \biggl\{
Z = \begin{pmatrix} z_{11} & z_{12} \\ z_{21} & z_{22} \end{pmatrix}
; \: z_{ij} \in \BB C \biggr\}
= \biggl\{ Z= \begin{pmatrix} z^0-iz^3 & -iz^1-z^2 \\ -iz^1+z^2 & z^0+iz^3
\end{pmatrix} ; \: z^k \in \BB C \biggr\}.
$$
For $Z \in \HC$, we write
$$
N(Z) = \det \begin{pmatrix} z_{11} & z_{12} \\ z_{21} & z_{22} \end{pmatrix}
= z_{11}z_{22}-z_{12}z_{21} = (z^0)^2 + (z^1)^2 + (z^2)^2 + (z^3)^2
$$
and think of it as the norm of $Z$. We realize $U(2)$ as
$$
U(2) = \{ Z \in \HC ;\: Z^*=Z^{-1} \},
$$
where $Z^*$ denotes the complex conjugate transpose of a complex matrix $Z$.
For $R>0$, we set
$$
U(2)_R = \{ RZ ;\: Z \in U(2) \} \quad \subset \HC
$$
and orient it as in \cite{FL1}, so that
$$
\int_{U(2)_R} \frac{dV}{N(Z)^2} = -2\pi^3 i,
$$
where $dV$ is a holomorphic 4-form
$$
dV = dz^0 \wedge dz^1 \wedge dz^2 \wedge dz^3
= \frac14 dz_{11} \wedge dz_{12} \wedge dz_{21} \wedge dz_{22}.
$$
Recall that a group $GL(2,\HC) \simeq GL(4,\BB C)$ acts on $\HC$ by fractional
linear (or conformal) transformations:
\begin{equation}  \label{conformal-action}
h: Z \mapsto (aZ+b)(cZ+d)^{-1} = (a'-Zc')^{-1}(-b'+Zd'),
\qquad Z \in \HC,
\end{equation}
where
$h = \bigl(\begin{smallmatrix} a & b \\ c & d \end{smallmatrix}\bigr)
\in GL(2,\HC)$ and 
$h^{-1} = \bigl(\begin{smallmatrix} a' & b' \\ c' & d' \end{smallmatrix}\bigr)$.

For convenience we recall Lemmas 10 and 61 from \cite{FL1}:

\begin{lem}  \label{Z-W}
For $h = \bigl( \begin{smallmatrix} a' & b' \\ c' & d' \end{smallmatrix} \bigr)
\in GL(2,\HC)$
with $h^{-1} =
\bigl(\begin{smallmatrix} a & b \\ c & d \end{smallmatrix}\bigr)$,
let $\tilde Z = (aZ+b)(cZ+d)^{-1}$ and $\tilde W = (aW+b)(cW+d)^{-1}$.
Then
\begin{align*}
(\tilde Z - \tilde W) &= (a'-Wc')^{-1} \cdot(Z-W) \cdot (cZ+d)^{-1}  \\
&= (a'-Zc')^{-1} \cdot(Z-W) \cdot (cW+d)^{-1}.
\end{align*}
\end{lem}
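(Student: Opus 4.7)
The key observation is that (\ref{conformal-action}) furnishes two equivalent expressions for each conformally transformed point. The strategy is to write $\tilde Z$ in one form and $\tilde W$ in the other, so that after multiplying the difference by suitable factors to clear the inverses, the resulting polynomial expression collapses using the matrix identity $h^{-1}h=I$.

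For the first equality, I would use $\tilde Z = (aZ+b)(cZ+d)^{-1}$ together with $\tilde W = (a'-Wc')^{-1}(-b'+Wd')$. Multiplying $\tilde Z - \tilde W$ on the left by $(a'-Wc')$ and on the right by $(cZ+d)$ removes all inverses and produces
$$
(a'-Wc')(aZ+b) - (-b'+Wd')(cZ+d).
$$
Expanding and then collecting terms according to whether $W$ appears on the far left and $Z$ on the far right, the coefficients that appear are precisely $a'a+b'c$, $a'b+b'd$, $c'a+d'c$, $c'b+d'd$ --- namely the four entries of $h^{-1}h$, which equal $1$, $0$, $0$, $1$ respectively. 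Hence the whole expression collapses to $Z-W$, and moving the clearing factors back across yields the first equality.

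The second equality is obtained by the symmetric choice $\tilde Z = (a'-Zc')^{-1}(-b'+Zd')$ and $\tilde W = (aW+b)(cW+d)^{-1}$, clearing instead by $(a'-Zc')$ on the left and $(cW+d)$ on the right; the same four relations from $h^{-1}h=I$ again reduce the expanded expression to $Z-W$. The only thing requiring care throughout is the non-commutativity of $\HC$, which forces one to track the left/right placement of every factor; however, because the coefficients that need to vanish or equal $1$ always appear sandwiched between $Z$'s and $W$'s as scalar-like entries of $h^{-1}h$, this is bookkeeping rather than a genuine obstacle. Thus the lemma reduces entirely to the identity $h^{-1}h=I$ read off in $GL(2,\HC)$.
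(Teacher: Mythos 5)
Your proof is correct: clearing the inverses as you describe gives $(a'-Wc')(\tilde Z-\tilde W)(cZ+d)=(a'-Wc')(aZ+b)-(-b'+Wd')(cZ+d)=(a'a+b'c)Z+(a'b+b'd)-W(c'a+d'c)Z-W(c'b+d'd)=Z-W$ by $h^{-1}h=I$, and the symmetric computation yields the second equality. The paper itself gives no proof (it simply recalls Lemmas 10 and 61 of \cite{FL1}), and your direct verification is exactly the standard argument.
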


\begin{lem}  \label{Jacobian_lemma}
Let $d\tilde V$ denote the pull-back of $dV$ under the map
$Z \mapsto (aZ+b)(cZ+d)^{-1}$, where
$h = \bigl(\begin{smallmatrix} a' & b' \\ c' & d' \end{smallmatrix}\bigr)
\in GL(2,\HC)$ and
$h^{-1} = \bigl(\begin{smallmatrix} a & b \\ c & d \end{smallmatrix}\bigr)$.
Then
$$
dV = N(cZ+d)^2 \cdot N(a'-Zc')^2 \,d\tilde V.
$$
\end{lem}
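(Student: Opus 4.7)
The plan is to deduce the Jacobian formula by differentiating the identity in Lemma \ref{Z-W} to obtain the tangent map of $Z \mapsto \tilde Z$, and then to recognize the result as a standard bimultiplication operator on $M_2(\BB C)$ whose determinant is well-known.

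First I would set $W = Z + \delta Z$ in the first equality of Lemma \ref{Z-W} and take the limit $\delta Z \to 0$, or equivalently, directly differentiate $\tilde Z = (aZ+b)(cZ+d)^{-1}$ using the product and inverse rules. Since the right-hand factor in the lemma is $(cZ+d)^{-1}$ and the left-hand factor evaluated at $W = Z$ is $(a'-Zc')^{-1}$, this will yield the infinitesimal relation
$$
d\tilde Z \;=\; (a'-Zc')^{-1}\cdot dZ \cdot (cZ+d)^{-1}.
$$
Thus the differential of the conformal map $Z \mapsto \tilde Z$, regarded as a $\BB C$-linear endomorphism of $\HC \simeq M_2(\BB C)$, is the bimultiplication operator $L \mapsto A\,L\,B$ with $A = (a'-Zc')^{-1}$ and $B = (cZ+d)^{-1}$.

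Next I would compute the determinant of this bimultiplication. The map $L \mapsto AL$ on $M_2(\BB C) \simeq \BB C^2 \oplus \BB C^2$ acts as $A$ on each column, hence has determinant $(\det A)^2 = N(A)^2$; similarly $L \mapsto LB$ has determinant $N(B)^2$. Composing gives Jacobian $N(A)^2\,N(B)^2 = N(a'-Zc')^{-2}\cdot N(cZ+d)^{-2}$. Because $dV$ is the top holomorphic $4$-form on $\HC$ (either expression, $dz^0\wedge\cdots\wedge dz^3$ or $\tfrac14\,dz_{11}\wedge\cdots\wedge dz_{22}$, transforms by this same complex Jacobian), the pull-back satisfies
$$
d\tilde V \;=\; N(cZ+d)^{-2}\cdot N(a'-Zc')^{-2}\,dV,
$$
which rearranges to the claim.

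The only mildly subtle step is justifying that $L \mapsto ALB$ has complex determinant $N(A)^2 N(B)^2$; the cleanest verification is to note that under the Kronecker identification $M_2(\BB C) \simeq \BB C^2 \otimes \BB C^2$ this operator becomes $A \otimes B^{t}$, whose determinant is $(\det A)^{\dim \BB C^2}(\det B^{t})^{\dim \BB C^2} = N(A)^2 N(B)^2$. Alternatively one could verify the Jacobian formula on the generators of $GL(2,\HC)$ — translations (trivial Jacobian), the scalings $Z\mapsto \alpha Z\beta$ (Jacobian $N(\alpha)^2 N(\beta)^2$ by inspection), and the inversion $Z\mapsto Z^{-1}$ (where $d(Z^{-1}) = -Z^{-1}\,dZ\,Z^{-1}$ gives Jacobian $N(Z)^{-4}$) — and assemble the general formula by the chain rule, but the direct route through Lemma \ref{Z-W} is more economical and avoids any case analysis.
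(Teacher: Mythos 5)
The paper does not prove this lemma at all --- it is quoted verbatim as Lemma 61 of \cite{FL1} --- so there is no in-paper argument to compare against. Your proof is correct and is essentially the standard derivation: the differential $d\tilde Z = (a'-Zc')^{-1}\,dZ\,(cZ+d)^{-1}$ follows either from Lemma \ref{Z-W} with $W\to Z$ or from direct differentiation, the determinant of the bimultiplication $L\mapsto ALB$ on $M_2(\BB C)$ is indeed $N(A)^2N(B)^2$ (your column/row or Kronecker-product justification is fine), and the sanity check on the inversion $Z\mapsto Z^{-1}$ (Jacobian $N(Z)^{-4}$, matching $N(cZ+d)^2N(a'-Zc')^2=N(Z)^4$) confirms the direction of the pull-back identity.
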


\subsection{Harmonic Functions on $\HC$}

As in Section 2 of \cite{FL2}, we consider the space of $\BB C$-valued
functions on $\HC$ (possibly with singularities) which are holomorphic with
respect to the complex variables $z_{11},z_{12},z_{21},z_{22}$ and harmonic,
i.e. annihilated by
$$
\square 
= 4\biggl( \frac{\partial^2}{\partial z_{11}\partial z_{22}}
- \frac{\partial^2}{\partial z_{12}\partial z_{21}} \biggr)
= \frac{\partial^2}{(\partial z^0)^2} + \frac{\partial^2}{(\partial z^1)^2}
+ \frac{\partial^2}{(\partial z^2)^2} + \frac{\partial^2}{(\partial z^3)^2}.
$$
We denote this space by $\widetilde{\cal H}$.
Then the conformal group $GL(2,\HC)$ acts on $\widetilde{\cal H}$ by two
slightly different actions:
\begin{align*}
\pi^0_l(h): \: \phi(Z) \quad &\mapsto \quad \bigl( \pi^0_l(h)\phi \bigr)(Z) =
\frac 1{N(cZ+d)} \cdot \phi \bigl( (aZ+b)(cZ+d)^{-1} \bigr),  \\
\pi^0_r(h): \: \phi(Z) \quad &\mapsto \quad \bigl( \pi^0_r(h)\phi \bigr)(Z) =
\frac 1{N(a'-Zc')} \cdot \phi \bigl( (a'-Zc')^{-1}(-b'+Zd') \bigr),
\end{align*}
where
$h = \bigl(\begin{smallmatrix} a' & b' \\ c' & d' \end{smallmatrix}\bigr)
\in GL(2,\HC)$ and
$h^{-1} = \bigl(\begin{smallmatrix} a & b \\ c & d \end{smallmatrix}\bigr)$.
These two actions coincide on $SL(2,\HC) \simeq SL(4,\BB C)$
which is defined as the connected Lie subgroup of $GL(2,\HC)$ with Lie algebra
$$
\mathfrak{sl}(2,\HC) = \{ x \in \mathfrak{gl}(2,\HC) ;\: \re (\tr x) =0 \}
\simeq \mathfrak{sl}(4,\BB C).
$$

We introduce two spaces of harmonic polynomials:
$$
{\cal H}^+ = \widetilde{\cal H} \cap \BB C[z_{11},z_{12},z_{21},z_{22}],
$$
$$
{\cal H} = \widetilde{\cal H} \cap \BB C[z_{11},z_{12},z_{21},z_{22}, N(Z)^{-1}]
$$
and the space of harmonic polynomials regular at infinity:
$$
{\cal H}^- = \bigl\{ \phi \in \widetilde{\cal H};\:
N(Z)^{-1} \cdot \phi(Z^{-1}) \in {\cal H}^+ \bigr\}.
$$
Then
$$
{\cal H} = {\cal H}^- \oplus {\cal H}^+.
$$
In particular, there are no homogeneous harmonic functions in
$\BB C[z_{11},z_{12},z_{21},z_{22},N(Z)^{-1}]$ of degree $-1$.
Differentiating the actions $\pi^0_l$ and $\pi^0_r$, we obtain actions of
$\mathfrak{gl}(2,\HC) \simeq \mathfrak{gl}(4,\BB C)$ which preserve
the spaces ${\cal H}$, ${\cal H}^-$ and ${\cal H}^+$.
By abuse of notation, we denote these Lie algebra actions by
$\pi^0_l$ and $\pi^0_r$ respectively.
They are described in Subsection 3.2 of \cite{FL2}.

By Theorem 28 in \cite{FL1}, for each $R>0$, we have a
bilinear pairing between $(\pi^0_l, {\cal H})$ and $(\pi^0_r, {\cal H})$:
\begin{equation}  \label{H-pairing}
(\phi_1,\phi_2)_R = \frac 1{2\pi^2}
\int_{S^3_R} (\degt \phi_1)(Z) \cdot \phi_2(Z) \,\frac{dS}R,
\qquad \phi_1, \phi_2 \in {\cal H},
\end{equation}
where $S^3_R \subset \BB H$ is the three-dimensional sphere of radius $R$
centered at the origin
$$
S^3_R = \{ X \in \BB H ;\: N(X)=R^2 \},
$$
$dS$ denotes the usual Euclidean volume element on $S^3_R$, and
$\degt$ denotes the degree operator plus identity:
$$
\degt f = f + \deg f = f + z_{11}\frac{\partial f}{\partial z_{11}} +
z_{12}\frac{\partial f}{\partial z_{12}} + z_{21}\frac{\partial f}{\partial z_{21}}
+ z_{22}\frac{\partial f}{\partial z_{22}}.
$$
When this pairing is restricted to ${\cal H}^+ \times {\cal H}^-$,
it is $\mathfrak{gl}(2,\HC)$-invariant, independent of the choice of $R>0$,
non-degenerate and antisymmetric
$$
(\phi_1,\phi_2)_R = - (\phi_2,\phi_1)_R,
\qquad \phi_1 \in {\cal H}^+, \: \phi_2 \in {\cal H}^-.
$$

We conclude this subsection with an analogue of the Poisson formula
(Theorem 34 in \cite{FL1}). It involves a certain open region $\BB D^+_R$
in $\HC$ which will be defined in (\ref{D_R}).

\begin{thm} \label{Poisson}
Let $R>0$ and let $\phi \in \widetilde{\cal H}$ be a harmonic function
with no singularities on the closure of $\BB D^+_R$, then
$$
\phi(W) = \biggl( \phi,\frac1{N(Z-W)} \biggr)_R =
\frac 1{2\pi^2} \int_{Z \in S^3_R} \frac{(\degt \phi)(Z)}{N(Z-W)}
\,\frac{dS}R, \qquad \forall W \in \BB D^+_R.
$$
\end{thm}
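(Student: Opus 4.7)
The plan is to reduce the formula to a calculation on homogeneous harmonic polynomials via an explicit series expansion of the reproducing kernel $1/N(Z-W)$.

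First I would verify the base case $W = 0$. For a homogeneous harmonic polynomial $\phi$ of degree $n$, one has $\degt \phi = (n+1)\phi$, and $N(Z) = R^2$ on $S^3_R$, so
$$
\biggl(\phi, \frac{1}{N(Z)}\biggr)_R = \frac{n+1}{2\pi^2 R^3} \int_{S^3_R} \phi(Z)\,dS.
$$
For $n = 0$ this equals $\phi$ (using $\mathrm{Vol}(S^3_R) = 2\pi^2 R^3$), and for $n \geq 1$ it vanishes by the mean value property for spherical harmonics of positive degree. Both cases match $\phi(0)$.

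For general $W \in \BB D^+_R$ the heart of the argument is a series expansion
$$
\frac{1}{N(Z-W)} = \sum_{n=0}^{\infty} K_n(Z,W),
$$
where $K_n$ is a finite sum of products $f_n^{ij}(Z)\,e_n^{ij}(W)$ of degree-$n$ harmonic polynomials in $W$ paired with a basis in $Z$ that is dual under $(\cdot,\cdot)_R$. The region $\BB D^+_R$ is presumably defined precisely so that this expansion converges absolutely and uniformly on compacta for $W$ inside and $Z \in S^3_R$. Granting this, interchange of sum and integral together with the duality relation yields $(\phi, K_n(\cdot, W))_R = \phi_n(W)$, the degree-$n$ homogeneous component of $\phi$ evaluated at $W$. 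Since $\phi$ is harmonic on the closure of $\BB D^+_R$, its Taylor expansion converges there, and summation over $n$ produces $\phi(W)$.

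The main technical obstacle is the construction of the kernel expansion and verification of its convergence on $\BB D^+_R$; once these are in hand, the rest is essentially formal. An alternative route would be to observe that both sides are harmonic functions of $W$ on $\BB D^+_R$, agreeing at $W = 0$, and then to propagate equality by analytic continuation or conformal invariance --- but $S^3_R$ is not preserved by the conformal transformations that shift $W$, so one would need a Stokes-type deformation of the integration cycle, which in the end demands essentially the same analytic control as the direct expansion approach.
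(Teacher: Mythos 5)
This theorem is not proved in the paper at all --- it is imported as Theorem 34 of \cite{FL1} --- but your argument is essentially the standard one and the one used there: the kernel expansion you posit is precisely the matrix-coefficient expansion (\ref{1/N-expansion}) with the roles of $Z$ and $W$ interchanged (valid for $Z \in S^3_R$ and $W \in \BB D^+_R$, since then $WZ^{-1}\in\BB D^+$), and the duality between the $W$-side polynomials $t^l_{m\,\underline{n}}(W)$ and the $Z$-side factors $t^l_{n\,\underline{m}}(Z^{-1})N(Z)^{-1}$ is exactly the orthogonality relation (\ref{H-orthogonality}). The two points you leave open --- uniform convergence on $S^3_R$ justifying the interchange of sum and integral, and convergence on $\BB D^+_R$ of the expansion of $\phi$ into homogeneous harmonic ($K$-type) components rather than its raw Taylor series --- are real but standard, and you flag them correctly.
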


\subsection{Representation $(\rho_1,\Zh)$ of $\mathfrak{gl}(2,\HC)$}

Let $\widetilde{\Zh}$ denote the space of $\BB C$-valued functions on $\HC$
(possibly with singularities) which are holomorphic with respect to the
complex variables $z_{11}$, $z_{12}$, $z_{21}$, $z_{22}$.
(There are no differential equations imposed on functions in $\widetilde{\Zh}$
whatsoever.) We recall the action of $GL(2,\HC)$ on $\widetilde{\Zh}$ given by
equation (49) in \cite{FL1}:
\begin{equation}  \label{1-action}
\rho_1(h): \: f(Z) \quad \mapsto \quad \bigl( \rho_1(h)f \bigr)(Z) =
\frac {f \bigl( (aZ+b)(cZ+d)^{-1} \bigr)}{N(cZ+d) \cdot N(a'-Zc')},
\end{equation}
where
$h = \bigl(\begin{smallmatrix} a' & b' \\ c' & d' \end{smallmatrix}\bigr)
\in GL(2,\HC)$ and 
$h^{-1} = \bigl(\begin{smallmatrix} a & b \\ c & d \end{smallmatrix}\bigr)$.
We have a natural $GL(2,\HC)$-equivariant multiplication map
\begin{equation}  \label{M}
M: (\pi_l^0, \widetilde{\cal H}) \otimes (\pi_r^0, \widetilde{\cal H})
\to (\rho_1,\widetilde{\Zh})
\end{equation}
which is determined on pure tensors by
$$
M \bigl( \phi_1(Z_1) \otimes \phi_2(Z_2) \bigr) = (\phi_1 \cdot \phi_2)(Z),
\qquad \phi_1, \phi_2 \in \widetilde{\cal H}.
$$
Differentiating the $\rho_1$-action, we obtain an action
(still denoted by $\rho_1$) of $\mathfrak{gl}(2,\HC)$ which preserves spaces
\begin{align}
\Zh^+ &= \{\text{polynomial functions on $\HC$}\}
= \BB C[z_{11},z_{12},z_{21},z_{22}] \qquad \text{and}  \label{zh+} \\
\Zh &= \bigl\{\text{polynomial functions on
$\{ Z \in \HC ;\: N(Z) \ne 0 \}$}\bigr\}
= \BB C[z_{11},z_{12},z_{21},z_{22}, N(Z)^{-1}].  \label{zh}
\end{align}

Recall Proposition 69 from \cite{FL1}:

\begin{prop}
The representation $(\rho_1,\Zh)$ of $\mathfrak{gl}(2,\HC)$
has a non-degenerate symmetric bilinear pairing
\begin{equation}  \label{pairing}
\langle f_1,f_2 \rangle =
\frac i{2\pi^3} \int_{Z \in U(2)_R} f_1(Z) \cdot f_2(Z) \,dV,
\qquad f_1, f_2 \in \Zh.
\end{equation}
This bilinear pairing is $\mathfrak{gl}(2,\HC)$-invariant and
independent of the choice of $R>0$.
\end{prop}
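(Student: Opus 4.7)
The plan is to verify well-definedness, symmetry, $R$-independence, $\mathfrak{gl}(2,\HC)$-invariance, and non-degeneracy in turn. Well-definedness is immediate: $|N(Z)| = R^2 > 0$ on the compact cycle $U(2)_R$, so every element of $\Zh$ is regular there and the integral converges. Symmetry is immediate from the commutativity of scalar multiplication: $f_1 \cdot f_2 = f_2 \cdot f_1$.

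For independence of $R$, I would observe that $f_1 f_2 \, dV$ is a holomorphic $(4,0)$-form on the open set $\Omega = \{Z \in \HC : N(Z) \neq 0\}$, and hence automatically $d$-closed (its $\bar\partial$ vanishes by holomorphy, and $\partial$ of a top holomorphic form vanishes for degree reasons). The family $\{U(2)_t\}_{t \in [R_1,R_2]}$ is a real 5-chain in $\Omega$ whose boundary is $U(2)_{R_2} - U(2)_{R_1}$, so Stokes' theorem yields equality of the two integrals.

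For $\mathfrak{gl}(2,\HC)$-invariance I would first establish local $GL(2,\HC)$-invariance. Setting $\tilde Z = (aZ+b)(cZ+d)^{-1}$ and combining the definition (\ref{1-action}) of $\rho_1$ with Lemma \ref{Jacobian_lemma}, the factor $N(cZ+d)^2 \cdot N(a'-Zc')^2$ appearing in the Jacobian exactly cancels the product of the two $\rho_1$-weights, yielding
$$
(\rho_1(h)f_1)(Z) \cdot (\rho_1(h)f_2)(Z) \cdot dV = f_1(\tilde Z) \cdot f_2(\tilde Z) \cdot d\tilde V.
$$
Integrating over $U(2)_R$ and changing variables to $W = \tilde Z$ turns $\langle \rho_1(h)f_1, \rho_1(h)f_2 \rangle$ into the integral of $f_1 f_2 \, dV$ over $h(U(2)_R)$. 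For $h$ sufficiently close to the identity, $h(U(2)_R)$ is a small deformation of $U(2)_R$ inside $\Omega$, hence homologous to it by the same $d$-closedness argument as above. Differentiating at $h = e$ along one-parameter subgroups then gives invariance under all of $\mathfrak{gl}(2,\HC)$.

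Non-degeneracy is the main obstacle. I would decompose $\Zh = \bigoplus_n \Zh_n$ by homogeneity, each $\Zh_n$ being finite-dimensional (spanned by rational functions $P(Z)/N(Z)^k$ with $\deg P - 2k = n$). The scaling $Z \mapsto tZ$ sends $U(2)_R$ to $U(2)_{tR}$ and produces a factor $t^{n+m+4}$ in the integrand of the pairing of $\Zh_n$ with $\Zh_m$, so the already-proved $R$-independence forces this pairing to vanish unless $m = -n-4$. It then suffices to show that each finite-dimensional pairing $\Zh_n \times \Zh_{-n-4} \to \BB C$ is non-degenerate. The most hands-on route is to parametrize $U(2) \subset \HC$ by $z_{22} = \bar z_{11}\,\delta$, $z_{21} = -\bar z_{12}\,\delta$ with $|z_{11}|^2 + |z_{12}|^2 = 1$ and $\delta = N(Z)$ a unit phase, reducing the monomial pairing to a standard Schur-orthogonality integral on $U(2)$; a cleaner alternative is to decompose $(\rho_1,\Zh)$ into $\mathfrak{gl}(4,\BB C)$-irreducibles and apply invariance together with Schur's lemma, reducing the problem to exhibiting a single nonzero pairing between each irreducible component and its contragredient partner.
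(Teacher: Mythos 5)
The paper itself gives no proof of this proposition --- it is quoted verbatim as Proposition 69 of [FL1] --- so there is nothing internal to compare against; your argument follows the same standard route that [FL1] uses (Lemma \ref{Jacobian_lemma} plus a Stokes/homotopy argument for invariance and $R$-independence, and matrix-coefficient orthogonality on $U(2)$ for non-degeneracy). The well-definedness, symmetry, $R$-independence and invariance steps are all correct as written; in particular the cancellation of $N(cZ+d)^2 N(a'-Zc')^2$ against the two $\rho_1$-weights and the subsequent homotopy of $h(U(2)_R)$ back to $U(2)_R$ inside $\{N(Z)\ne 0\}$ is exactly the right mechanism.

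There is, however, one factual error in the non-degeneracy step: the homogeneous components $\Zh_n$ are \emph{not} finite-dimensional. Since $k$ ranges over all of $\BB Z$ in the basis (\ref{Zh-basis}), the degree-$n$ piece contains $t^l_{n\,\underline{m}}(Z)\cdot N(Z)^{(n-2l)/2}$ for arbitrarily large $l$ (e.g.\ $\Zh_0$ contains $t^l_{n\,\underline{m}}(Z)\cdot N(Z)^{-l}$ for every integer $l\ge 0$). So the reduction ``it suffices to check non-degeneracy of a finite-dimensional pairing $\Zh_n\times\Zh_{-n-4}$'' does not literally apply. The repair is the refinement you mention only in passing at the end: grade not just by homogeneity but by $U(2)\times U(2)$-isotypic component, i.e.\ by the pair $(l,k)$. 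Each such piece is $(2l+1)^2$-dimensional, the piece indexed by $(l,k)$ pairs trivially with every piece except the one indexed by $(l,-k-2l-2)$ (this is forced by Schur orthogonality on $U(2)\times U(2)$ together with the scaling argument you already gave), and on that pair the pairing is perfect by the explicit orthogonality relations (\ref{orthogonality}). With that substitution the argument closes; as written, the finite-dimensionality claim is false and the non-degeneracy step is incomplete.
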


\subsection{The Group $\HC^{\times}$ and Its Matrix Coefficients}  \label{matrix-coeff-subsection}

We denote by $\HC^{\times}$ the group of invertible complexified quaternions:
$$
\HC^{\times} = \{ Z \in \HC ;\: N(Z) \ne 0 \}.
$$
Clearly, $\HC^{\times} \simeq GL(2,\BB C)$.
We denote by $(\tau_{\frac12},\BB S)$ the tautological representation of
$\HC^{\times}$. That is, we let
$$
\BB S = \biggl\{ \begin{pmatrix} s_1 \\ s_2 \end{pmatrix} ;
s_1, s_2 \in \BB C \biggr\}
$$
and define
$$
\tau_{\frac12}(Z) \begin{pmatrix} s_1 \\ s_2 \end{pmatrix}
= \begin{pmatrix} z_{11}s_1 + z_{12}s_2 \\ z_{21}s_1 + z_{22}s_2 \end{pmatrix},
\qquad
Z = \begin{pmatrix} z_{11} & z_{12} \\ z_{21} & z_{22} \end{pmatrix} \in \HC^{\times},
\quad \begin{pmatrix} s_1 \\ s_2 \end{pmatrix} \in \BB S.
$$
For $l=0,\frac12,1,\frac32, \dots$, we denote by $(\tau_l,V_l)$ the
$2l$-th symmetric power product of $(\tau_{\frac12},\BB S)$.
(In particular, $(\tau_0,V_0)$ is the trivial one-dimensional representation.)
Then each $(\tau_l,V_l)$ is an irreducible representation of $\HC^{\times}$
of dimension $2l+1$.
A concrete realization of $(\tau_l,V_l)$ as well as an isomorphism
$V_l \simeq \BB C^{2l+1}$ suitable for our purposes are described in
Subsection 2.5 of \cite{FL1}.

Recall the matrix coefficient functions of $\tau_l(Z)$ described by
equation (27) of \cite{FL1} (cf. \cite{V}):
\begin{equation}  \label{t}
t^l_{n\,\underline{m}}(Z) = \frac 1{2\pi i}
\oint (sz_{11}+z_{21})^{l-m} (sz_{12}+z_{22})^{l+m} s^{-l+n} \,\frac{ds}s,
\qquad
\begin{matrix} l = 0, \frac12, 1, \frac32, \dots, \\ m,n \in \BB Z +l, \\
 -l \le m,n \le l, \end{matrix}
\end{equation}
$Z=\bigl(\begin{smallmatrix} z_{11} & z_{12} \\
z_{21} & z_{22} \end{smallmatrix}\bigr) \in \HC$,
the integral is taken over a loop in $\BB C$ going once around the origin
in the counterclockwise direction.
These functions extend to $\HC$ as polynomials.
We have the following orthogonality relations with respect to the pairing
(\ref{H-pairing}):
\begin{equation}  \label{H-orthogonality}
\bigl( t^{l'}_{n'\,\underline{m'}}(Z), t^l_{m\underline{n}}(Z^{-1}) \cdot N(Z)^{-1} \bigr)_R
= -\bigl(t^l_{m\underline{n}}(Z^{-1}) \cdot N(Z)^{-1},t^{l'}_{n'\,\underline{m'}}(Z)\bigr)_R
= \delta_{ll'} \delta_{mm'} \delta_{nn'}
\end{equation}
and similar orthogonality relations with respect to the pairing (\ref{pairing}):
\begin{equation}  \label{orthogonality}
\bigl\langle t^{l'}_{n'\,\underline{m'}}(Z) \cdot N(Z)^{k'},
t^l_{m\underline{n}}(Z^{-1}) \cdot N(Z)^{-k-2} \bigr\rangle
= \frac1{2l+1} \delta_{kk'}\delta_{ll'} \delta_{mm'} \delta_{nn'},
\end{equation}
where the indices $k,l,m,n$ are
$l = 0, \frac12, 1, \frac32, \dots$, $m,n \in \BB Z +l$, $-l \le m,n \le l$,
$k \in \BB Z$ and similarly for $k',l',m',n'$ (see, for example, \cite{V}).
It is useful to recall that
$$
t^l_{m\underline{n}}(Z^{-1}) \quad \text{is proportional to} \quad
t^l_{-n\underline{-m}}(Z) \cdot N(Z)^{-2l}.
$$

One advantage of working with these functions is that they form $K$-type bases
of various spaces:

\begin{prop} [Proposition 19 in \cite{FL1}, Proposition 5 in \cite{FL3} and
Corollary 6 in \cite{FL3}] We have the following vector space bases:
\begin{enumerate}
\item
The functions 
$$
t^l_{n\,\underline{m}}(Z), \qquad
l=0, \frac12, 1, \frac32, \dots, \quad m,n=-l,-l+1,\dots,l,
$$
form a vector space basis of
${\cal H}^+ = \{ \phi \in \Zh^+;\: \square\phi=0 \}$;
\item
The functions 
$$
t^l_{n\,\underline{m}}(Z) \cdot N(Z)^{-(2l+1)}, \qquad
l=0, \frac12, 1, \frac32, \dots, \quad m,n=-l,-l+1,\dots,l,
$$
form a vector space basis of ${\cal H}^-$;
\item
The functions 
$$
t^l_{n\,\underline{m}}(Z) \cdot N(Z)^k, \qquad
l=0, \frac12, 1, \frac32, \dots, \quad m,n=-l,-l+1,\dots,l, \quad k=0,1,2,\dots,
$$
form a vector space basis of $\Zh^+ = \BB C[z_{11},z_{12},z_{21},z_{22}]$;
\item
The functions 
\begin{equation}  \label{Zh-basis}
t^l_{n\,\underline{m}}(Z) \cdot N(Z)^k, \qquad
l=0, \frac12, 1, \frac32, \dots, \quad m,n=-l,-l+1,\dots,l, \quad k \in\BB Z,
\end{equation}
form a vector space basis of $\Zh = \BB C[z_{11},z_{12},z_{21},z_{22}, N(Z)^{-1}]$.
\end{enumerate}
\end{prop}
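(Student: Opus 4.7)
The plan is to establish the four parts in cascade: first part (3) on $\Zh^+$ by combining the Fischer (harmonic) decomposition with an identification of the $t^l_{n\,\underline{m}}$ as matrix coefficients under the $\HC^{\times}\times\HC^{\times}$ left-right action on polynomials; then deduce parts (1) and (4) by restriction to harmonic polynomials and by localization at $N(Z)$; and finally derive part (2) by transporting the basis of ${\cal H}^+$ through the bijection $\phi(Z)\mapsto N(Z)^{-1}\phi(Z^{-1})$ that identifies ${\cal H}^-$ with ${\cal H}^+$.

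First I would verify that each $t^l_{n\,\underline{m}}(Z)$ is a homogeneous harmonic polynomial of degree $2l$. Homogeneity is immediate from the contour integral (\ref{t}). Harmonicity follows because for every fixed $s$, the integrand $(sz_{11}+z_{21})^{l-m}(sz_{12}+z_{22})^{l+m}$ is annihilated by $\square=4(\partial_{11}\partial_{22}-\partial_{12}\partial_{21})$: the two cross-derivatives produce the same factored expression (each picks up one factor of $s$ and differentiates one of the two linear forms), and $\square$ commutes with integration in $s$. This establishes $\Span\{t^l_{n\,\underline{m}}\}_{m,n}\subseteq{\cal H}^+$. To upgrade to equality in each degree, I would recognize $\{t^l_{n\,\underline{m}}\}_{m,n}$ as the matrix coefficients of the irreducible representation $(\tau_l,V_l)$ of $\HC^{\times}$, so this span is an irreducible $\HC^{\times}\times\HC^{\times}$-submodule of dimension $(2l+1)^2$; a comparison against the classical count that harmonic polynomials of degree $2l$ on $\BB R^4$ form a space of dimension $(2l+1)^2$ closes the gap. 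Linear independence across $l$, $m$, $n$ follows from the non-degenerate orthogonality (\ref{H-orthogonality}).

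For part (3), I would invoke the Fischer decomposition $\Zh^+=\bigoplus_{k\ge 0}N(Z)^k\cdot{\cal H}^+$, which holds because $\square$, multiplication by $N(Z)$, and the degree operator form an $\mathfrak{sl}_2$-triple acting on $\BB C[z_{11},z_{12},z_{21},z_{22}]$ (the non-degenerate quadratic form $N$ is the symbol of $\square$). Combining with part (1) yields the asserted basis. Part (4) is then immediate by localization: every element of $\Zh$ has the form $P(Z)/N(Z)^a$ with $P\in\Zh^+$, so expanding $P$ via part (3) and dividing produces the expansion in the basis (\ref{Zh-basis}); global linear independence across $k\in\BB Z$ is precisely (\ref{orthogonality}). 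For part (2), I would push the basis from part (1) through the bijection ${\cal H}^+\to{\cal H}^-$, $\phi\mapsto N(Z)^{-1}\phi(Z^{-1})$, and invoke the identity $t^l_{m\,\underline{n}}(Z^{-1})\propto t^l_{-n\,\underline{-m}}(Z)\cdot N(Z)^{-2l}$ recalled in the excerpt to rewrite the image in the claimed form $t^l_{n\,\underline{m}}(Z)\cdot N(Z)^{-(2l+1)}$.

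The main obstacle is the equality in part (1): showing that the matrix coefficients $\{t^l_{n\,\underline{m}}\}$ span \emph{all} harmonics of degree $2l$ rather than a proper subspace. My preferred route is the representation-theoretic one above, which uses irreducibility of $(\tau_l,V_l)$ together with the fact that $\square$ commutes with the $\HC^{\times}\times\HC^{\times}$-action (because $N(Z)$ is a bi-weight relative invariant). An alternative would be to use the pairing (\ref{H-pairing}) together with the orthogonality (\ref{H-orthogonality}) to write down explicit coefficients: for any $\phi\in{\cal H}^+$ the scalars $(\phi,t^l_{m\,\underline{n}}(Z^{-1})N(Z)^{-1})_R$ reproduce the expansion and prove surjectivity simultaneously, circumventing the abstract representation count.
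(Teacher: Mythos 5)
This proposition is imported verbatim from \cite{FL1} and \cite{FL3}; the paper gives no proof of its own, only the citations, so there is no ``paper route'' to compare against. Your argument is a correct and essentially standard self-contained proof. The cascade is sound: harmonicity of each $t^l_{n\,\underline{m}}$ from $\partial_{11}\partial_{22}$ and $\partial_{12}\partial_{21}$ acting identically on the integrand $(sz_{11}+z_{21})^{l-m}(sz_{12}+z_{22})^{l+m}$; the count $(2l+1)^2$ against the dimension $(d+1)^2$ of degree-$d$ harmonics in four variables with $d=2l$, using linear independence of matrix coefficients of the irreducible $(\tau_l,V_l)$ (or the orthogonality relations (\ref{H-orthogonality})); the Fischer decomposition $\Zh^+=\bigoplus_{k\ge0}N(Z)^k\cdot{\cal H}^+$ for part (3); localization for part (4). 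Two small points are worth making explicit rather than leaving implicit. First, in part (2) the map $\phi\mapsto N(Z)^{-1}\phi(Z^{-1})$ is, by the definition of ${\cal H}^-$, only guaranteed to land in ${\cal H}^-$ once you know its output is \emph{harmonic}; you should record the Kelvin-transform computation $\square\bigl(N(Z)^k f_l\bigr)=4k(2l+k+1)N(Z)^{k-1}f_l$ (which follows by iterating the identity $\square(N(Z)g)=N(Z)\square g+4(\deg+2)g$ used elsewhere in the paper), so that $k=-(2l+1)$ gives harmonicity of $t^l_{n\,\underline{m}}(Z)\cdot N(Z)^{-(2l+1)}$. Second, in part (3) linear independence across different $k$ comes from the \emph{uniqueness} of the Fischer decomposition, which deserves one sentence (it is exactly the multiplicity-one statement for the $\mathfrak{sl}_2$-triple $(N(Z)\cdot,\;\square,\;\deg+2)$). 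With those two remarks added, the proof is complete; your alternative closing argument via the pairing (\ref{H-pairing}) and (\ref{H-orthogonality}) is also valid, with the caveat that those orthogonality relations are themselves quoted from the literature, so that route trades one external input for another.
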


Another advantage is having matrix coefficient expansions such as
those described in Propositions 25, 26 and 27 in \cite{FL1}.
For convenience we restate Proposition 25 from \cite{FL1}:

\begin{prop}
We have the following matrix coefficient expansion
\begin{equation}  \label{1/N-expansion}
\frac 1{N(Z-W)}= N(W)^{-1} \cdot \sum_{l,m,n}
t^l_{m\,\underline{n}}(Z) \cdot t^l_{n\,\underline{m}}(W^{-1}),
\qquad \begin{matrix} l=0,\frac 12, 1, \frac 32,\dots, \\
m,n = -l, -l+1, \dots, l, \end{matrix}
\end{equation}
which converges pointwise absolutely in the region
$\{ (Z,W) \in \HC \times \HC^{\times}; \: ZW^{-1} \in \BB D^+ \}$,
where $\BB D^+$ is an open region in $\HC$ to be defined in (\ref{D}).
\end{prop}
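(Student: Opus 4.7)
The plan is to reduce the claimed expansion to the standard character generating function $\det(I - A)^{-1} = \sum_k \tr(\operatorname{Sym}^k A)$. First I would factor out $W$: since $Z - W = (ZW^{-1} - I) W$ and $N$ is the determinant on $\HC^{\times} \simeq GL(2, \BB C)$, multiplicativity yields $N(Z - W) = N(ZW^{-1} - I) \cdot N(W) = N(I - ZW^{-1}) \cdot N(W)$, the last step using $\det(-M) = \det M$ for $2 \times 2$ matrices. Cancelling the prefactor $N(W)^{-1}$ on both sides of (\ref{1/N-expansion}), it therefore suffices to prove
\begin{equation*}
\frac{1}{N(I - A)} = \sum_{l, m, n} t^l_{m\,\underline{n}}(Z) \cdot t^l_{n\,\underline{m}}(W^{-1}), \qquad A := ZW^{-1},
\end{equation*}
with $l \in \tfrac{1}{2}\BB Z_{\ge 0}$ and $-l \le m, n \le l$.

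Next I would collapse the $(m, n)$ double sum via the homomorphism property of $(\tau_l, V_l)$. By construction $t^l_{n\,\underline{m}}(Z)$ is an entry of the representation matrix $\tau_l(Z)$, with row index $n$ and column index $m$ (this is checked directly from (\ref{t}) at $l = \tfrac12$, where the matrix $(t^{1/2}_{n\,\underline{m}}(Z))$ recovers $Z$ itself). The identity $\tau_l(A_1 A_2) = \tau_l(A_1) \tau_l(A_2)$ then yields the index formula
\begin{equation*}
\sum_{k} t^l_{a\,\underline{k}}(A_1) \cdot t^l_{k\,\underline{b}}(A_2) = t^l_{a\,\underline{b}}(A_1 A_2).
\end{equation*}
Applied with $A_1 = Z$, $A_2 = W^{-1}$, $a = b = m$, and then summed over $m$, the triple sum collapses to $\sum_l \tr(\tau_l(A))$. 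Writing the eigenvalues of $A$ as $\lambda_1, \lambda_2$, the $2l$-th symmetric power $\tau_l(A) = \operatorname{Sym}^{2l}(A)$ has eigenvalues $\{\lambda_1^{j}\lambda_2^{2l-j}\}_{j=0}^{2l}$, so
\begin{equation*}
\sum_{l \in \frac12 \BB Z_{\ge 0}} \tr(\tau_l(A)) = \sum_{j, k \ge 0} \lambda_1^{j} \lambda_2^{k} = \frac{1}{(1 - \lambda_1)(1 - \lambda_2)} = \frac{1}{N(I - A)},
\end{equation*}
exactly as required.

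The main obstacle is \emph{pointwise absolute} convergence of the triple sum, since the rearrangement above secretly invokes Fubini. The region $\BB D^+$ promised in (\ref{D}) should be precisely the open set on which both eigenvalues of $A = ZW^{-1}$ have absolute value strictly less than $1$, which is the natural domain of convergence of the double geometric series in $\lambda_1, \lambda_2$. To upgrade convergence of this collapsed series to convergence of the unsummed $(l, m, n)$-series, I would extract bounds of the form $|t^l_{n\,\underline{m}}(Z)| \le C(Z)\,\rho(Z)^{2l}$ from the contour integral (\ref{t}) by optimizing the radius of the loop in $s$; multiplying the analogous bound for $W^{-1}$, noting that each $l$ contributes only $(2l + 1)^2$ terms in the $(m,n)$-indices, and using the hypothesis $ZW^{-1} \in \BB D^+$ to arrange the geometric ratio to be strictly less than $1$, one obtains a convergent majorant. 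This justifies all rearrangements and yields (\ref{1/N-expansion}).
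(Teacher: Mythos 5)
This proposition is quoted verbatim from \cite{FL1} (Proposition 25) and the present paper gives no proof of it, so there is nothing internal to compare against; your algebraic core is indeed the standard argument behind that result. The factorization $N(Z-W)=N(W)\cdot N(1-ZW^{-1})$, the collapse of the $(m,n)$-sum via $\tau_l(A_1A_2)=\tau_l(A_1)\tau_l(A_2)$ to $\sum_l \tr\tau_l(ZW^{-1})$, and the symmetric-power generating function $\sum_{k\ge 0}\tr\operatorname{Sym}^k(A)=\det(1-A)^{-1}$ are all correct and are exactly the right reduction.

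The convergence discussion, however, has a genuine gap. First, $\BB D^+$ is defined in (\ref{D}) by $AA^*<1$, i.e.\ by the \emph{singular values} of $A=ZW^{-1}$ being less than $1$, not by its eigenvalues; your identification of $\BB D^+$ with the eigenvalue condition is false (e.g.\ a nilpotent matrix with a large entry has both eigenvalues $0$ but lies outside $\BB D^+$), although the containment goes the right way for summing the collapsed series. Second, and more seriously, your proposed majorant bounds $|t^l_{n\,\underline{m}}(Z)|$ and $|t^l_{n\,\underline{m}}(W^{-1})|$ \emph{separately}, so the geometric ratio you obtain is controlled by something like $\rho(Z)\cdot\rho(W^{-1})$, which is not forced to be $<1$ by the hypothesis $ZW^{-1}\in\BB D^+$: take $Z=\tfrac12 C$ and $W=C$ with $C\in SL(2,\BB C)$ of large operator norm, so that $ZW^{-1}=\tfrac12$ while $\|Z\|\,\|W^{-1}\|$ is as large as you like; then the generic entries of $\tau_l(Z)$ and $\tau_l(W^{-1})$ each grow like $\|C\|^{2l}$ and the term-by-term absolute values blow up. Thus your argument as sketched establishes convergence only on a strictly smaller region. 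To obtain the stated region one must use that the inner $(m,n)$-sums telescope to $\tr\tau_l(ZW^{-1})$, which is bounded by $(2l+1)\,\|ZW^{-1}\|_{op}^{2l}$ and hence sees only the product $ZW^{-1}$ --- that is, the convergence that actually holds uniformly on $\{ZW^{-1}\in\BB D^+\}$ is the grouped-in-$l$ convergence, and any claim of unrestricted absolute convergence of the full triple sum on that region needs either this grouping or a substantially more careful estimate than independent bounds on the two factors.
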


\subsection{Subgroups $U(2,2)_R \subset GL(2,\HC)$ and Domains
$\BB D^+_R$, $\BB D^-_R$}

We often regard the group $U(2,2)$ as a subgroup of $GL(2,\HC)$,
as described in Subsection 3.5 of \cite{FL1}. That is
$$
U(2,2) = \Biggl\{ \begin{pmatrix} a & b \\ c & d \end{pmatrix}
\in GL(2,\HC) ;\: a,b,c,d \in \HC,\:
\begin{matrix} a^*a = 1+c^*c \\ d^*d = 1+b^*b \\ a^*b=c^*d \end{matrix}
\Biggr\}.
$$
The maximal compact subgroup of $U(2,2)$ is
\begin{equation}  \label{U(2)xU(2)}
U(2) \times U(2) = \biggl\{
\begin{pmatrix} a & 0 \\ 0 & d \end{pmatrix} \in GL(2, \HC);\:
a,d \in \HC, \: a^*a=d^*d=1 \biggr\}.
\end{equation}

The group $U(2,2)$ acts on $\HC$ by fractional linear transformations
(\ref{conformal-action}) preserving $U(2) \subset \HC$ and open domains
\begin{equation}  \label{D}
\BB D^+ = \{ Z \in \HC;\: ZZ^*<1 \}, \qquad
\BB D^- = \{ Z \in \HC;\: ZZ^*>1 \},
\end{equation}
where the inequalities $ZZ^*<1$ and $ZZ^*>1$ mean that the matrix $ZZ^*-1$
is negative and positive definite respectively.
The sets $\BB D^+$ and $\BB D^-$ both have $U(2)$ as the Shilov boundary.

Similarly, for each $R>0$ we can define a conjugate of $U(2,2)$
$$
U(2,2)_R = \begin{pmatrix} R & 0 \\ 0 & 1 \end{pmatrix} U(2,2)
\begin{pmatrix} R^{-1} & 0 \\ 0 & 1 \end{pmatrix} \quad \subset GL(2,\HC).
$$
Each group $U(2,2)_R$ is a real form of $GL(2,\HC)$, preserves $U(2)_R$
and open domains
\begin{equation}  \label{D_R}
\BB D^+_R = \{ Z \in \HC ;\: ZZ^*<R^2 \}, \qquad
\BB D^-_R = \{ Z \in \HC ;\: ZZ^*>R^2 \}.
\end{equation}
These sets $\BB D^+_R$ and $\BB D^-_R$ both have $U(2)_R$ as the Shilov boundary.

\section{Representation $(\varpi_2,\Zh)$ and Its Properties}  \label{Zh-decomp-section}

In Sections \ref{I_R-section} and \ref{main-section} we break the two-loop
ladder diagram into smaller pieces and associate to each piece a
$\g{gl}(2,\HC)$-equivariant integral operator so that $L^{(2)}$ -- the operator
associated to the original two-loop ladder diagram -- is the composition of
the operators associated to the pieces. 
The intermediate operators that appear that way are equivariant with respect
to different actions of $\g{gl}(2,\HC)$, and all of these actions have appeared
before, for example, in \cite{FL3} with the exception of $\varpi_2$,
which we study in this section.

\subsection{Representations $(\varpi_m,\Zh)$}

In this subsection we introduce a family of representations $(\varpi_m,\Zh)$,
where the parameter $m=1,2,3,\dots$.
Thus we define the following actions of $GL(2,\HC)$ on $\widetilde{\Zh}$:
\begin{equation}  \label{pi_m-action}
\varpi_m(h): \: f(Z) \quad \mapsto \quad \bigl( \varpi_m(h)f \bigr)(Z) =
\frac {f \bigl( (aZ+b)(cZ+d)^{-1} \bigr)}{N(cZ+d)^m \cdot N(a'-Zc')},
\end{equation}
where
$h = \bigl(\begin{smallmatrix} a' & b' \\ c' & d' \end{smallmatrix}\bigr)
\in GL(2,\HC)$ and 
$h^{-1} = \bigl(\begin{smallmatrix} a & b \\ c & d \end{smallmatrix}\bigr)$.
When $m=1$, $\varpi_1$ coincides with $\rho_1$.
We have natural $GL(2,\HC)$-equivariant multiplication maps
$$
(\pi_l^0, \widetilde{\cal H}) \otimes (\varpi_m, \widetilde{\Zh})
\to (\varpi_{m+1},\widetilde{\Zh}) \quad \text{and} \quad
\underbrace{(\pi_l^0, \widetilde{\cal H}) \otimes \dots \otimes
(\pi_l^0, \widetilde{\cal H})}_{\text{$m$ times}} \otimes
(\pi_r^0, \widetilde{\cal H}) \to (\varpi_m,\widetilde{\Zh})
$$
which are determined on pure tensors by respectively
$$
\phi(Z_1) \otimes f(Z_2) \mapsto (\phi \cdot f)(Z) \quad \text{and} \quad
\phi_1(Z_1) \otimes \dots \otimes \phi_{m+1}(Z_{m+1}) \mapsto
(\phi_1 \cdot \ldots \cdot \phi_{m+1})(Z),
$$
where $\phi, \phi_1, \dots, \phi_{m+1} \in \widetilde{\cal H}$,
$f \in \widetilde{\Zh}$.
Differentiating the $\varpi_m$-action, we obtain an action of
$\mathfrak{gl}(2,\HC)$ (cf. Lemma 68 in \cite{FL1} which treats the case $m=1$).
Recall that $\partial = \bigl(\begin{smallmatrix} \partial_{11} & \partial_{21} \\
\partial_{12} & \partial_{22} \end{smallmatrix}\bigr)$, where
$\partial_{ij} = \frac{\partial}{\partial z_{ij}}$.

\begin{lem}  \label{pi_m-algebra-action}
The Lie algebra action $\varpi_m$ of $\mathfrak{gl}(2,\HC)$ on $\widetilde{\Zh}$
is given by
\begin{align*}
\varpi_m \begin{pmatrix} A & 0 \\ 0 & 0 \end{pmatrix} &:
f \mapsto \tr \bigl( A \cdot (-Z \cdot \partial f - f) \bigr)  \\
\varpi_m \begin{pmatrix} 0 & B \\ 0 & 0 \end{pmatrix} &:
f \mapsto \tr \bigl( B \cdot (-\partial f ) \bigr)  \\
\varpi_m \begin{pmatrix} 0 & 0 \\ C & 0 \end{pmatrix} &:
f \mapsto \tr \Bigl( C \cdot \bigl(
Z \cdot (\partial f) \cdot Z +(m+1)Zf \bigr) \Bigr)
= \tr \Bigl( C \cdot \bigl(Z \cdot \partial (Zf) \bigr) +(m-1)Zf \Bigr)  \\
\varpi_m \begin{pmatrix} 0 & 0 \\ 0 & D \end{pmatrix} &:
f \mapsto \tr \Bigl( D \cdot \bigl( (\partial f) \cdot Z + mf \bigr) \Bigr)
= \tr \Bigl( D \cdot \bigl( \partial (Zf) +(m-2)f \bigr) \Bigr).
\end{align*}
\end{lem}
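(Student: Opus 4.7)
The approach is direct computation: differentiate the group action (\ref{pi_m-action}) at the identity along the four standard types of one-parameter subgroups. By linearity in $X$, it suffices to handle the cases $X = \bigl(\begin{smallmatrix} A & 0 \\ 0 & 0 \end{smallmatrix}\bigr)$, $\bigl(\begin{smallmatrix} 0 & B \\ 0 & 0 \end{smallmatrix}\bigr)$, $\bigl(\begin{smallmatrix} 0 & 0 \\ C & 0 \end{smallmatrix}\bigr)$, $\bigl(\begin{smallmatrix} 0 & 0 \\ 0 & D \end{smallmatrix}\bigr)$ separately.

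For each block I would first write out $h(t) = \exp(tX)$ and $h(t)^{-1}$, extract the entries $(a,b,c,d)$ and $(a',b',c',d')$, and substitute into (\ref{pi_m-action}). The off-diagonal cases are exact since $X^2 = 0$; the diagonal cases involve $e^{\pm tA}$ or $e^{\pm tD}$, but only the first-order Taylor behavior matters. Differentiation at $t = 0$ then proceeds via product and chain rules, using the standard identities $N(I + tM) = 1 + t\tr M + O(t^2)$ and $N(e^{tM}) = e^{t \tr M}$. The key intermediate calculation appears in the $C$-block: from $h(t) \cdot Z = Z(tCZ + 1)^{-1}$ one reads off $\frac{d}{dt}\bigl|_{t=0}(h(t) \cdot Z) = -ZCZ$, while both $N$-factors contribute, combining to give the coefficient $m+1$ in the mass term $(m+1)Zf$.

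Because (\ref{pi_m-action}) is a right action on functions---one checks $\varpi_m(h_1)\varpi_m(h_2) = \varpi_m(h_2 h_1)$ using multiplicativity of the $N$-factors, in the spirit of Lemma \ref{Z-W}---its derivative at the identity is an anti-homomorphism of $\mathfrak{gl}(2,\HC)$, and the Lie algebra representation $\varpi_m$ in the statement is obtained from this derivative by the standard sign negation. After negating, the four derivatives computed above reproduce the stated formulas for the $A$, $B$, $C$, $D$ blocks exactly. The equivalent expressions $\tr(C(Z \partial(Zf) + (m-1)Zf))$ and $\tr(D(\partial(Zf) + (m-2)f))$ then follow from the product-rule identity $\partial(Zf) = 2fI + (\partial f)Z$, a one-line computation using the transposed matrix $\partial = \bigl(\begin{smallmatrix}\partial_{11} & \partial_{21} \\ \partial_{12} & \partial_{22}\end{smallmatrix}\bigr)$.

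The proof is entirely mechanical, with no deep obstacle. The only point requiring care is bookkeeping: the transposed arrangement of $\partial$ versus $Z$ must be respected when converting coordinate sums $\sum_{ij} M_{ij} \partial_{ij} f$ into traces $\tr(M \cdot \partial f)$, and cyclic invariance of trace is used repeatedly (for instance, to rewrite $\tr(ZCZ \cdot \partial f)$ as $\tr(C \cdot Z(\partial f)Z)$ in the $C$-block).
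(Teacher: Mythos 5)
Your proposal is correct and follows exactly the route the paper intends: the paper states this lemma without writing out the computation, simply noting that one differentiates the action (\ref{pi_m-action}) and citing Lemma 68 of \cite{FL1} for the case $m=1$. Your differentiation along the four block types, the identification of the $(m+1)$ coefficient from the two $N$-factors in the $C$-block, the sign negation coming from the right-action convention, and the identity $\partial(Zf) = 2fI + (\partial f)Z$ for the alternative forms all check out.
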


This lemma implies that $\mathfrak{gl}(2, \HC)$ preserves spaces
$\Zh$ and $\Zh^+$ defined by (\ref{zh+})-(\ref{zh}).
In Subsection \ref{irred-comp-subsection} we extend this family of
representations $(\varpi_m,\Zh)$.
By Theorem \ref{JV-thm}, each $(\varpi_m,\Zh^+)$ is irreducible.

Define\footnote{Unfortunately, this notation $\Zh^-_m$ conflicts with notations
of Subsection 5.1 of \cite{FL1}.}
$$
\Zh^-_m = \biggl\{ f \in \Zh;\:
\varpi_m\begin{pmatrix} 0 & 1 \\ 1 & 0 \end{pmatrix} f(Z)
= \frac{f(Z^{-1})}{N(Z)^{m+1}} \in \Zh^+ \biggr\}.
$$
In the spirit of Definition 16 in \cite{FL1}, we can say that
$\Zh^-_m$ consists of those elements of $\Zh$ that are regular at infinity
according to the $\varpi_m$-action of $GL(2,\HC)$.
Clearly, $\Zh^-_m$ is invariant under the $\varpi_m$-action of
$\mathfrak{gl}(2,\HC)$ and $\Zh^+ \oplus \Zh^-_m$ are proper subspaces of $\Zh$.

\subsection{Irreducible Components of $(\varpi_2,\Zh)$}

In this subsection we are concerned with decomposition of
$(\varpi_2,\Zh)$ into irreducible components.

\begin{thm}  \label{Zh2-decomposition}
The spaces
\begin{align*}
\Zh^+ &= \BB C \text{-span of }
\bigl\{ t^l_{n\,\underline{m}}(Z) \cdot N(Z)^k;\: k \ge 0 \bigr\}, \\
\Zh_2^- &= \BB C \text{-span of }
\bigl\{ t^l_{n\,\underline{m}}(Z) \cdot N(Z)^k;\: k \le -(2l+3) \bigr\}, \\
I_2^- &= \BB C \text{-span of }
\bigl\{ t^l_{n\,\underline{m}}(Z) \cdot N(Z)^k;\: k \le -2 \bigr\}, \\
I_2^+ &= \BB C \text{-span of }
\bigl\{ t^l_{n\,\underline{m}}(Z) \cdot N(Z)^k;\: k \ge -(2l+1) \bigr\}, \\
J_2 &= \BB C \text{-span of }
\bigl\{ t^l_{n\,\underline{m}}(Z) \cdot N(Z)^k;\: -(2l+1) \le k \le -2 \bigr\}
\end{align*}
and their sums are the only proper $\mathfrak{gl}(2,\HC)$-invariant subspaces
of $\Zh$ (see Figure \ref{decomposition-fig}).

The irreducible components of $(\varpi_2, \Zh)$ are the subrepresentations
$$
(\varpi_2, \Zh^+), \qquad (\varpi_2, \Zh_2^-), \qquad (\varpi_2, J_2)
$$
and the quotients
\begin{equation}  \label{quotients}
\bigl( \varpi_2, \Zh/(I_2^- \oplus \Zh^+) \bigr)
= \bigl( \varpi_2, I_2^+/(\Zh^+ \oplus J_2) \bigr), \quad
\bigl( \varpi_2, \Zh/(\Zh_2^- \oplus I_2^+) \bigr)
= \bigl( \varpi_2, I_2^-/(\Zh_2^- \oplus J_2) \bigr).
\end{equation}
\end{thm}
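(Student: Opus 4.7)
The plan is to work with the basis $\{t^l_{n\,\underline{m}}(Z) \cdot N(Z)^k\}$ of $\Zh$ in (\ref{Zh-basis}) and to track how $\varpi_2(X)$ acts on these basis elements via Lemma \ref{pi_m-algebra-action}. The block-diagonal generators of $\mathfrak{gl}(2,\HC)$ (the $A$- and $D$-parts) only mix the indices $m,n$ within a fixed $(l,k)$-block of size $(2l+1)^2$; the off-diagonal generators (the $B$- and $C$-parts) shift the pair $(l,k)$ by one of $(\pm\tfrac12,0)$ or $(\mp\tfrac12,\pm1)$, since $\partial$ lowers total degree by $1$ and multiplication by $Z$ raises it by $1$, and each of these two operations decomposes naturally into a ``pure $l$-shift'' piece and an ``$l$-shift combined with $k$-shift'' piece.

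Next I would verify invariance of the five subspaces. For $\Zh^+$ this is immediate from Lemma \ref{pi_m-algebra-action} with $m=2$, since each of the four types of generators sends polynomials to polynomials. For $\Zh_2^-$ it follows by conjugating the previous case with $\varpi_2\bigl(\begin{smallmatrix}0&1\\1&0\end{smallmatrix}\bigr)$, which by the very definition of $\Zh_2^-$ identifies it with $\Zh^+$. For $I_2^+$, $I_2^-$ and $J_2$ the real content is the vanishing of certain ``boundary coefficients'': when $\varpi_2$ of a $B$- or $C$-type generator is applied to $t^l_{n\,\underline{m}}(Z) \cdot N(Z)^k$ at one of the critical values $k=0,\, -1,\, -(2l+1),\, -(2l+2)$, the component of the result that would land in a forbidden region must vanish. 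These vanishings are numerical identities which one reads off from explicit formulas for $\partial\bigl(t^l_{n\,\underline{m}} \cdot N^k\bigr)$ and $Z \cdot \bigl(t^l_{n\,\underline{m}} \cdot N^k\bigr)$ obtained from the contour-integral definition (\ref{t}) of $t^l_{n\,\underline{m}}$; the critical values appear as roots of the quadratic factor $k(k+2l+1)$ that arises when $\square$ (equivalently $\partial \circ Z\cdot$) is applied to the product.

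Having established invariance, I would prove that each listed subrepresentation and each quotient in (\ref{quotients}) is irreducible by showing that the graph whose vertices are the $(l,k)$-blocks inside the piece in question and whose edges are the nonzero $\varpi_2(B)$- and $\varpi_2(C)$-transitions is connected. Since each $(l,k)$-block is a single $K$-type occurring with multiplicity one, this is enough. To finish, any proper invariant subspace $V\subset\Zh$ is a direct sum of $(l,k)$-blocks, and the set of $(l,k)$-pairs it occupies must be closed under all nonzero $B$- and $C$-type transitions; the connectivity analysis above identifies the only such closed subsets with unions of the five regions in Figure \ref{decomposition-fig}.

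The main obstacle is the bookkeeping in the invariance step: one must compute the exact coefficients of $\varpi_2(B)$- and $\varpi_2(C)$-type operators on $t^l_{n\,\underline{m}}(Z) \cdot N(Z)^k$ and verify that precisely the right combinations vanish at precisely the four critical values of $k$, so as to produce five irreducible constituents rather than a coarser or finer decomposition. The calculation is mechanical once one has explicit formulas for $\partial t^l_{n\,\underline{m}}$ and $Z \cdot t^l_{n\,\underline{m}}$, but it is the heart of the argument.
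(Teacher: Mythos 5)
Your strategy coincides with the paper's: work in the $K$-type basis (\ref{Zh-basis}), note that the diagonal generators preserve each $(l,k)$-block and that each block is a multiplicity-one $U(2)\times U(2)$-type, decompose the $B$- and $C$-type actions into a ``vertical'' and a ``diagonal'' harmonic component, and read off invariance, irreducibility and completeness from the vanishing of the arrow coefficients. Most of the outline is sound (invariance of $\Zh^+$ by polynomiality, of $\Zh_2^-$ by conjugation with $\varpi_2\bigl(\begin{smallmatrix}0&1\\1&0\end{smallmatrix}\bigr)$, irreducibility via connectivity of the arrow graph, completeness via multiplicity one), modulo the small extra check, made in the paper, that the target functions $Z\cdot(\partial f_l)\cdot Z+Zf_l$ and its conjugate are never zero while $\partial^+f_l=0$ exactly when $l=0$.

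The genuine gap is in your list of critical values, and it is not a harmless slip. For the upper-triangular generators the arrows do vanish at $k=0$ and $k=-(2l+1)$, the roots of the $\square$-eigenvalue factor $k(2l+k+1)$, as you say. But for the lower-triangular generators the $\varpi_2$-action is $f\mapsto\tr\bigl(C\cdot(Z\cdot(\partial f)\cdot Z+(m+1)Zf)\bigr)$ with $m=2$ (Lemma \ref{pi_m-algebra-action}), and the resulting coefficients are $\frac{2l+k+3}{2l+1}$ and $\frac{k+2}{2l+1}$ (equation (\ref{C-action})): the critical values are $k=-2$ and $k=-(2l+3)$, the roots of $(k+2)(2l+k+3)$, not $k=-1$ and $k=-(2l+2)$. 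Your heuristic that all four critical values come from $k(k+2l+1)$ is exactly what leads you astray; the values $0,\,-1,\,-(2l+1),\,-(2l+2)$ you list are the critical values for $\varpi_1=\rho_1$. Carrying out your plan with them reproduces the three-piece direct-sum decomposition of $(\rho_1,\Zh)$ in Figure \ref{decomposition-fig2} and misses the content of the theorem: the constituents $I_2^+/(\Zh^+\oplus J_2)$ (the column $k=-1$) and $I_2^-/(\Zh_2^-\oplus J_2)$ (the line $k=-(2l+2)$) arise only as quotients, precisely because the arrows leaving the column $k=-1$ into $\Zh^+\oplus J_2$ and leaving the line $k=-(2l+2)$ into $\Zh_2^-\oplus J_2$ do \emph{not} vanish, so the five regions assemble into a non-semisimple lattice of invariant subspaces rather than a direct sum.
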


\begin{figure}
\begin{center}
\setlength{\unitlength}{1mm}
\begin{picture}(120,70)
\multiput(10,10)(10,0){11}{\circle*{1}}
\multiput(10,20)(10,0){11}{\circle*{1}}
\multiput(10,30)(10,0){11}{\circle*{1}}
\multiput(10,40)(10,0){11}{\circle*{1}}
\multiput(10,50)(10,0){11}{\circle*{1}}
\multiput(10,60)(10,0){11}{\circle*{1}}

\thicklines
\put(70,0){\vector(0,1){70}}
\put(0,10){\vector(1,0){120}}

\thinlines
\put(68,10){\line(0,1){55}}
\put(70,8){\line(1,0){45}}
\qbezier(68,10)(68,8)(70,8)

\put(52,20){\line(0,1){45}}
\put(48.6,18.6){\line(-1,1){43.6}}
\qbezier(52,20)(52,15.2)(48.6,18.6)

\put(5,8){\line(1,0){35}}
\put(41.4,11.4){\line(-1,1){36.4}}
\qbezier(40,8)(44.8,8)(41.4,11.4)

\put(63,7){\line(1,0){52}}
\put(56,10){\line(-1,1){51}}
\qbezier(63,7)(59,7)(56,10)

\put(5,7){\line(1,0){45}}
\put(53,10){\line(0,1){55}}
\qbezier(50,7)(53,7)(53,10)

\put(72,67){$2l$}
\put(117,12){$k$}
\put(3,24){$\Zh_2^-$}
\put(33,62){$J_2$}
\put(112,44){$\Zh^+$}
\put(25,3){$I_2^-$}
\put(85,3){$I_2^+$}
\end{picture}
\end{center}
\caption{Decomposition of $(\varpi_2,\Zh)$ into irreducible components.}
\label{decomposition-fig}
\end{figure}
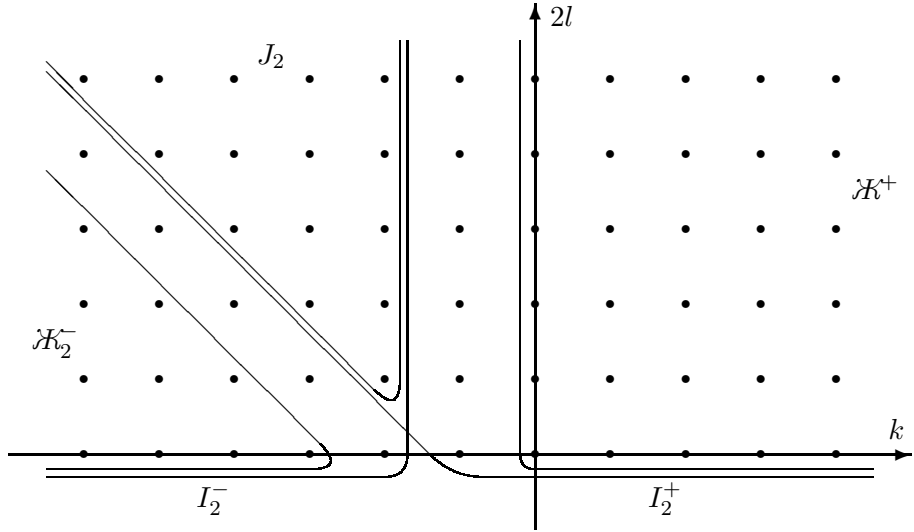

\begin{proof}
Note that the basis elements (\ref{Zh-basis}) consist of functions of the kind
$$
f_l(Z) \cdot N(Z)^k, \qquad \square f_l(Z)=0, \quad
l=0, \frac12, 1, \frac32, \dots, \quad k \in\BB Z,
$$
where the functions $f_l(Z)$ range over a basis of harmonic functions which
are polynomials of degree $2l$.
Recall that we consider $U(2) \times U(2)$ as a subgroup of $GL(2,\HC)$
via (\ref{U(2)xU(2)}).
For $k$ and $l$ fixed, these functions span an irreducible representation
of $U(2) \times U(2)$, which -- when restricted to $SU(2) \times SU(2)$ --
becomes isomorphic to $V_l \boxtimes V_l$, where $V_l$ denotes the
irreducible representation of $SU(2)$ of dimension $2l+1$ described in
Subsection \ref{matrix-coeff-subsection}.

To determine the effect of matrices of the kind
$\bigl(\begin{smallmatrix} 0 & B \\ 0 & 0 \end{smallmatrix}\bigr)
\in \mathfrak{gl}(2,\HC)$ with $B \in \HC$, we use
Lemma \ref{pi_m-algebra-action} describing their action and compute
$$
\partial \bigl( f_l(Z) \cdot N(Z)^k \bigr)
= \partial f_l \cdot N(Z)^k + k Z^+f_l \cdot N(Z)^{k-1}.
$$
By direct computation we have:
$$
\partial f_l \cdot N(Z) = Z^+ \deg f_l - Z^+ \cdot (\partial^+ f_l) \cdot Z^+
= 2l Z^+f_l - Z^+ \cdot (\partial^+ f_l) \cdot Z^+,
$$
$$
\square (Z^+f_l) = Z^+ \square f_l + 4\partial f_l
\qquad \text{and} \qquad
\square ( N(Z) \cdot g ) = N(Z) \cdot \square g + 4(\deg + 2) g.
$$
Hence we can write
\begin{equation}  \label{Z^+f}
Z^+f_l = \Bigl(Z^+f_l - \frac{\partial f_l \cdot N(Z)}{2l+1} \Bigr)
+ \frac{\partial f_l \cdot N(Z)}{2l+1}
= \frac{Z^+ \cdot (\partial^+ f_l) \cdot Z^+ + Z^+f_l}{2l+1}
+ \frac{\partial f_l \cdot N(Z)}{2l+1}
\end{equation}
and
\begin{equation}  \label{B-action}
\partial \bigl( f_l(Z) \cdot N(Z)^k \bigr)
= \frac{2l+k+1}{2l+1} \partial f_l \cdot N(Z)^k
+ \frac{k}{2l+1} \bigl( Z^+ \cdot (\partial^+ f_l) \cdot Z^+ + Z^+f_l \bigr)
\cdot N(Z)^{k-1}
\end{equation}
with $\partial f_l$ and $Z^+ \cdot (\partial^+ f_l) \cdot Z^+ + Z^+f_l$
being harmonic and having degrees $2l-1$ and $2l+1$ respectively.

Next we determine the effect of matrices of the kind
$\bigl(\begin{smallmatrix} 0 & 0 \\ C & 0 \end{smallmatrix}\bigr)
\in \mathfrak{gl}(2,\HC)$ with $C \in \HC$. Again, we use
Lemma \ref{pi_m-algebra-action} and compute
$$
Z \cdot \partial \bigl( f_l \cdot N(Z)^k \bigr) \cdot Z + 3 Zf_l \cdot N(Z)^k
= Z \cdot (\partial f_l) \cdot Z \cdot N(Z)^k
+ (k+3) Zf_l \cdot N(Z)^k.
$$
Conjugating (\ref{Z^+f}) we see that
$$
Zf_l = \frac{Z \cdot (\partial f_l) \cdot Z + Zf_l}{2l+1}
+ \frac{\partial^+ f_l \cdot N(Z)}{2l+1}.
$$
Therefore,
\begin{multline}  \label{C-action}
Z \cdot \partial \bigl( f_l \cdot N(Z)^k \bigr) \cdot Z + 3 Zf_l \cdot N(Z)^k \\
= \frac{2l+k+3}{2l+1} \bigl( Z \cdot (\partial f_l) \cdot Z + Zf_l \bigr)
\cdot N(Z)^k + \frac{k+2}{2l+1} \partial^+ f_l \cdot N(Z)^{k+1}
\end{multline}
with $Z \cdot (\partial f_l) \cdot Z + Zf_l$ and $\partial^+ f_l$
being harmonic and having degrees $2l+1$ and $2l-1$ respectively.

The actions of
$\bigl(\begin{smallmatrix} A & 0 \\ 0 & 0 \end{smallmatrix}\bigr)$,
$\bigl(\begin{smallmatrix} 0 & B \\ 0 & 0 \end{smallmatrix}\bigr)$,
$\bigl(\begin{smallmatrix} 0 & 0 \\ C & 0 \end{smallmatrix}\bigr)$ and
$\bigl(\begin{smallmatrix} 0 & 0 \\ 0 & D \end{smallmatrix}\bigr)$
are illustrated in Figure \ref{actions}.
In the diagram describing
$\varpi_2\bigl(\begin{smallmatrix} 0 & B \\ 0 & 0 \end{smallmatrix}\bigr)$
the vertical arrow disappears if $l=0$ or $2l+k+1=0$
and the diagonal arrow disappears if $k=0$.
Similarly, in the diagram describing
$\varpi_2\bigl(\begin{smallmatrix} 0 & 0 \\ C & 0 \end{smallmatrix}\bigr)$
the vertical arrow disappears if $2l+k+3=0$ and the
diagonal arrow disappears if $k=-2$ or $l=0$.
This proves that $\Zh^+$, $\Zh_2^-$, $I_2^+$, $I_2^-$ and $J_2$
are $\mathfrak{gl}(2,\HC)$-invariant subspaces of $\Zh$. Note that 
$$
\tr (Z \cdot \partial f + f) =
\tr \begin{pmatrix}
z_{11} \partial_{11}f + z_{12} \partial_{12}f + f & \ast \ast \ast \\
\ast \ast \ast & z_{21} \partial_{21}f + z_{22} \partial_{22}f + f
\end{pmatrix}
= (\deg+2)f,
$$
hence
$Z \cdot (\partial f_l) \cdot Z + Zf_l = (Z \cdot \partial f_l + f_l) \cdot Z$
and its conjugate $Z^+ \cdot (\partial^+ f_l) \cdot Z^+ + Z^+f_l$ are never zero.
It follows from (\ref{B-action}) and (\ref{C-action}) that the
subrepresentations $(\varpi_2, \Zh^+)$, $(\varpi_2, \Zh_2^-)$, $(\varpi_2, J_2)$
and the quotients (\ref{quotients}) are irreducible with respect to the
$\varpi_2$-action of $\mathfrak{gl}(2,\HC)$.
Moreover, $\Zh^+$, $\Zh_2^-$, $I_2^+$, $I_2^-$, $J_2$ and their sums are
the only proper $\mathfrak{gl}(2,\HC)$-invariant subspaces of $\Zh$.
\end{proof}

\begin{figure}
\begin{center}
\begin{subfigure}[b]{0.22\textwidth}
\centering
\includegraphics[scale=0.2]{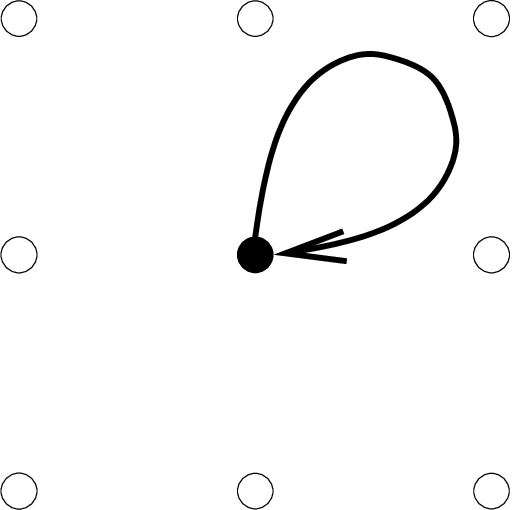}
\caption{Action of
$\varpi_2\bigl(\begin{smallmatrix} A & 0 \\ 0 & 0 \end{smallmatrix}\bigr)$.}
\end{subfigure}
\quad
\begin{subfigure}[b]{0.22\textwidth}
\centering
\includegraphics[scale=0.2]{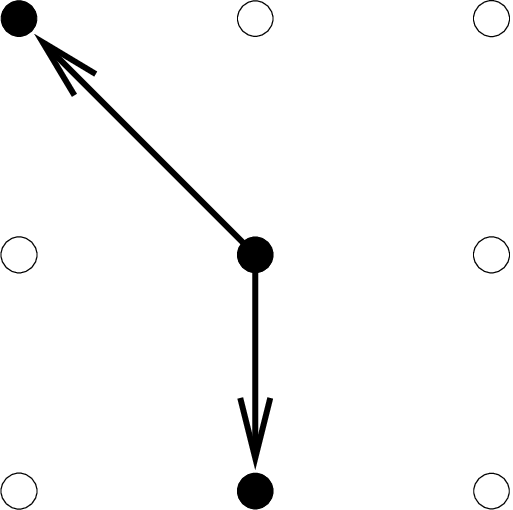}
\caption{Action of
$\varpi_2\bigl(\begin{smallmatrix} 0 & B \\ 0 & 0 \end{smallmatrix}\bigr)$.}
\end{subfigure}
\quad
\begin{subfigure}[b]{0.22\textwidth}
\centering
\includegraphics[scale=0.2]{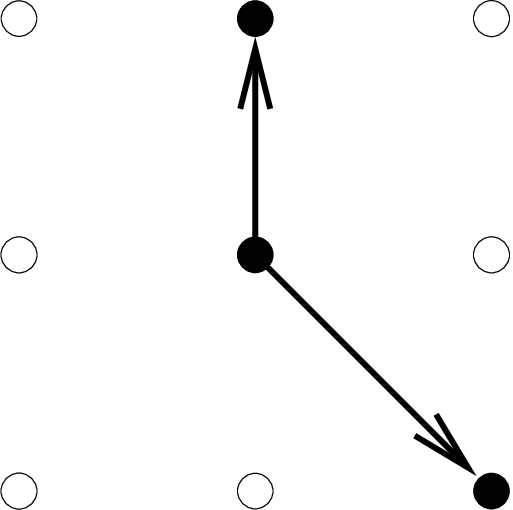}
\caption{Action of
$\varpi_2\bigl(\begin{smallmatrix} 0 & 0 \\ C & 0 \end{smallmatrix}\bigr)$.}
\end{subfigure}
\quad
\begin{subfigure}[b]{0.22\textwidth}
\centering
\includegraphics[scale=0.2]{AD.eps}
\caption{Action of
$\varpi_2\bigl(\begin{smallmatrix} 0 & 0 \\ 0 & D \end{smallmatrix}\bigr)$.}
\end{subfigure}
\end{center}
\caption{}
\label{actions}
\end{figure}


\begin{rem}
The same argument can be used to identify the subrepresentations and
irreducible components of all $(\varpi_m, \Zh)$'s.
\end{rem}

Next we identify the quotient representations (\ref{quotients}).

\begin{prop}  \label{quotient-prop}
As representations of $\mathfrak{gl}(2,\HC)$,
$$
\bigl( \varpi_2, \Zh/(I_2^- \oplus \Zh^+) \bigr) \simeq (\pi^0_l, {\cal H}^+)
\qquad \text{and} \qquad
\bigl( \varpi_2, \Zh/(\Zh_2^- \oplus I_2^+) \bigr) \simeq (\pi^0_l, {\cal H}^-),
$$
in both cases the isomorphism map being
$$
{\cal H}^{\pm} \ni \phi(Z) \quad \mapsto \quad
\frac{\degt \phi(Z)}{N(Z)} \in
\begin{matrix} \Zh/(\Zh_2^- \oplus I_2^+) \\ \text{or} \\
\Zh/(I_2^- \oplus \Zh^+). \end{matrix}
$$
The inverse of this isomorphism is given by
\begin{equation}  \label{inverse-iso}
\begin{matrix} \Zh/(\Zh_2^- \oplus I_2^+) \\ \text{or} \\
\Zh/(I_2^- \oplus \Zh^+) \end{matrix} \ni f(Z)
\quad \mapsto \quad \biggl\langle f(Z), \frac1{N(Z-W)} \biggr\rangle_Z
= \frac i{2\pi^3} \int_{Z \in U(2)_R} \frac{f(Z)\,dV}{N(Z-W)} \in {\cal H}.
\end{equation}
\end{prop}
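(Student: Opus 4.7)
The plan is to verify that $\Phi^\pm: \phi \mapsto \degt\phi/N(Z)$ and the integral $\Psi^\pm(f)(W) = \langle f, 1/N(Z-W)\rangle_Z$ are mutually inverse bijections at the level of $K$-types, and then separately establish $\mathfrak{gl}(2,\HC)$-equivariance of $\Psi^\pm$. Since both sides are irreducible (by Theorem \ref{Zh2-decomposition} and standard irreducibility of $(\pi^0_l, {\cal H}^\pm)$), equivariance combined with nontriviality would also suffice via Schur's lemma; but the explicit inversion is needed to justify the integral formula \eqref{inverse-iso} anyway.

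First I would compute $\Phi^\pm$ on the $K$-type basis. For $\phi = t^l_{n\,\underline{m}}(Z) \in {\cal H}^+$ (homogeneous of degree $2l$), $\degt\phi = (2l+1)\phi$, so $\Phi^+(\phi) = (2l+1)\, t^l_{n\,\underline{m}}(Z) N(Z)^{-1}$. By Theorem \ref{Zh2-decomposition}, the quotient $\Zh/(I_2^- \oplus \Zh^+) = I_2^+/(\Zh^+ \oplus J_2)$ is spanned by precisely the basis functions $t^l_{n\,\underline{m}}(Z) N(Z)^k$ with $k = -1$ (the unique value in the range $k \ge -(2l+1)$ not already in $\Zh^+$ or $J_2$). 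Hence $\Phi^+$ sends the $K$-type basis of ${\cal H}^+$ bijectively, up to the nonzero scalar $(2l+1)$, to that of the quotient. The identical calculation for $\phi = t^l_{n\,\underline{m}}(Z) N(Z)^{-(2l+1)} \in {\cal H}^-$ (degree $-2l-2$) gives $\Phi^-(\phi) = -(2l+1)\, t^l_{n\,\underline{m}}(Z) N(Z)^{-(2l+2)}$, again a basis-to-basis bijection onto $\Zh/(\Zh_2^- \oplus I_2^+)$.

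Next I would verify the inverse formula and the factoring through the quotient simultaneously. For $W \in \BB D^+_R$ we have $|WZ^{-1}| < 1$ on $U(2)_R$, so swapping $Z$ and $W$ in \eqref{1/N-expansion} (and using $N(W-Z) = N(Z-W)$) gives
$$\frac{1}{N(Z-W)} = N(Z)^{-1} \sum_{l',m',n'} t^{l'}_{m'\,\underline{n'}}(W) \cdot t^{l'}_{n'\,\underline{m'}}(Z^{-1}).$$
Substituting this into the pairing and applying the orthogonality relations \eqref{orthogonality}, which force $k = k' = -1$ for a nonvanishing term, shows that $\Psi^+$ annihilates every basis element of $\Zh^+$ (where $k \ge 0$) and of $I_2^-$ (where $k \le -2$). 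So $\Psi^+$ factors through the quotient, and on the remaining representatives $t^l_{n\,\underline{m}}(Z) N(Z)^{-1}$ the pairing evaluates to $\tfrac{1}{2l+1} t^l_{n\,\underline{m}}(W) \in {\cal H}^+$. Combined with the previous step, this gives $\Psi^+ \circ \Phi^+ = \mathrm{Id}$ and $\Phi^+ \circ \Psi^+ = \mathrm{Id}$. The ${\cal H}^-$ case is handled symmetrically by using the original expansion \eqref{1/N-expansion} valid for $W \in \BB D^-_R$.

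For $\mathfrak{gl}(2,\HC)$-equivariance the cleanest route goes through $\Psi^\pm$. A direct Jacobian computation using Lemma \ref{Jacobian_lemma} extends the invariance of the pairing to the identity $\langle \varpi_m f_1, \varpi_{2-m} f_2\rangle = \langle f_1, f_2\rangle$, so $\varpi_2$ is dually paired with $\varpi_0$. Meanwhile, using the second form in Lemma \ref{Z-W}, $N(\tilde Z - \tilde W) = N(Z-W)/[N(a'-Zc') N(cW+d)]$, one checks that the kernel $K(Z,W) = 1/N(Z-W)$ satisfies $[\varpi_0(h)]_Z K = [\pi^0_l(h^{-1})]_W K$. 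Combining these two invariances,
$$\Psi^+(\varpi_2(h)f)(W) = \bigl\langle f,\, [\varpi_0(h^{-1})]_Z K\bigr\rangle_Z = \bigl\langle f,\, [\pi^0_l(h)]_W K\bigr\rangle_Z = [\pi^0_l(h)]_W \Psi^+(f)(W).$$
The main obstacle I anticipate is precisely the bookkeeping in this last step: one must select the correct of the two equivalent forms in Lemma \ref{Z-W} so that the $\varpi_0$-factor in $Z$ cancels cleanly against the transformation of $K$, leaving exactly the $\pi^0_l$-factor in $W$; and one must separately extend invariance from $U(2,2)_R$ to all of $\mathfrak{gl}(2,\HC)$ by analytic continuation in the parameters.
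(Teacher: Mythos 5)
Your proposal is correct, but it reaches the equivariance by a genuinely different route than the paper. The paper verifies directly that the \emph{forward} map $\phi \mapsto \degt\phi/N(Z)$ intertwines $\pi^0_l$ with $\varpi_2$ on the quotient, working infinitesimally: it checks diagonal matrices by a one-line computation and the matrices $\bigl(\begin{smallmatrix} 0 & B \\ 0 & 0\end{smallmatrix}\bigr)$, $\bigl(\begin{smallmatrix} 0 & 0 \\ C & 0\end{smallmatrix}\bigr)$ by specializing the formulas (\ref{B-action}) and (\ref{C-action}) (already derived in the proof of Theorem \ref{Zh2-decomposition}) to $k=-1$ and reducing modulo $\Zh^+\oplus J_2\oplus\Zh_2^-$; the inverse formula is then dispatched in one sentence via (\ref{1/N-expansion}) and (\ref{orthogonality}). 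You instead establish the $K$-type bijection first, then prove \emph{group-level} equivariance of the integral map $\Psi$ using the duality $\langle\varpi_2(h)f_1,\varpi_0(h)f_2\rangle=\langle f_1,f_2\rangle$ (note $\varpi_0=\pi^0_r$ extended to $\widetilde{\Zh}$, which is exactly the duality the paper records in the following subsection, with no circularity) together with the transformation law of the kernel $1/N(Z-W)$ from Lemma \ref{Z-W} and the cocycle identity $N(c(h^{-1}W)+d)\,N(c'W+d')=1$; equivariance of the forward map then follows by inversion. Your route is more conceptual and simultaneously shows that $\Psi$ kills $I_2^-\oplus\Zh^+$ (resp.\ $\Zh_2^-\oplus I_2^+$), but it carries the overhead you correctly flag: one must pass from invariance under $U(2,2)_R$ (or contour deformation for $h$ near the identity) to the full $\mathfrak{gl}(2,\HC)$-action by differentiation. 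The paper's infinitesimal computation on the forward map avoids that issue entirely and recycles work already done. Both arguments are sound; your constants $(2l+1)$ and $-(2l+1)$ and the identification of the quotients with the $k=-1$ and $k=-(2l+2)$ layers are all correct.
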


\begin{proof}
First we check that this vector space isomorphism commutes with the action of
diagonal matrices.
Let $h=\bigl(\begin{smallmatrix} a & 0 \\ 0 & d \end{smallmatrix}\bigr)^{-1}
\in GL(2,\HC)$, then
$$
\varpi_2(h): \frac{\phi(Z)}{N(Z)} \quad \mapsto \quad
\frac{N(a)}{N(d)^2} \cdot \frac{\phi(aZd^{-1})}{N(aZd^{-1})}
= \frac1{N(d)} \cdot \frac{\phi(aZd^{-1})}{N(Z)}
= \frac1{N(Z)} \cdot (\pi^0_l \phi)(Z).
$$

Next we check for the matrices of the kind
$\bigl(\begin{smallmatrix} 0 & B \\ 0 & 0 \end{smallmatrix}\bigr)
\in \mathfrak{gl}(2,\HC)$ with
$B \in \HC$. Their action is described in Lemma \ref{pi_m-algebra-action}.
Suppose that $\phi \in {\cal H}$ is homogeneous of homogeneity degree $\lambda$
(note that $\lambda$ is never equal to $-1$).
From (\ref{B-action}) with $k=-1$ we see that
$$
\partial \Bigl( \frac{\phi}{N(Z)} \Bigr)
\equiv \frac{\lambda}{\lambda+1}  \frac{\partial \phi}{N(Z)}
\mod \Zh^+ \oplus J_2 \oplus \Zh_2^-,
$$
which proves that the isomorphism respects the actions of the matrices
$\bigl(\begin{smallmatrix} 0 & B \\ 0 & 0 \end{smallmatrix}\bigr)
\in \mathfrak{gl}(2,\HC)$.

Then we check for the matrices of the kind
$\bigl(\begin{smallmatrix} 0 & 0 \\ C & 0 \end{smallmatrix}\bigr)
\in \mathfrak{gl}(2,\HC)$ with
$C \in \HC$. Suppose again that $\phi \in {\cal H}$ is homogeneous of
homogeneity degree $\lambda$.
From Lemma \ref{pi_m-algebra-action} and (\ref{C-action})
with $k=-1$ we see that
$$
Z \cdot \partial \Bigl( \frac{\phi}{N(Z)} \Bigr) \cdot Z
+ 3\frac{Z\phi}{N(Z)} \equiv
\frac{\lambda+2}{\lambda+1} \frac{Z \cdot (\partial \phi) \cdot Z +Z\phi}{N(Z)}
\mod \Zh^+ \oplus J_2 \oplus \Zh_2^-,
$$
which proves that the isomorphism respects the actions of the matrices
$\bigl(\begin{smallmatrix} 0 & 0 \\ C & 0 \end{smallmatrix}\bigr)
\in \mathfrak{gl}(2,\HC)$.

Finally, it follows from the matrix coefficient expansion (\ref{1/N-expansion})
and the orthogonality relations (\ref{orthogonality}) that the map
(\ref{inverse-iso}) is well defined and is the inverse isomorphism.
\end{proof}

\subsection{Invariant Pairing between $(\varpi_2,\Zh)$ and $(\pi_r^0, \Zh)$}

We can extend the $\pi_r^0$ action of $GL(2,\HC)$ on $\widetilde{\cal H}$
to $\widetilde{\Zh}$.
Differentiating this action, we obtain an action of $\mathfrak{gl}(2,\HC)$,
which preserves $\Zh$, $\Zh^+$ (and, of course, ${\cal H}^-$, ${\cal H}^+$).
This action is given by the same formulas as in Subsection 3.2 of \cite{FL2}.
Then we have a bilinear pairing between $(\varpi_2,\Zh)$ and $(\pi_r^0, \Zh)$
that is formally the same as (\ref{pairing}):
\begin{equation}  \label{pairing2}
\langle f_1,f_2 \rangle =
\frac i{2\pi^3} \int_{Z \in U(2)_R} f_1(Z) \cdot f_2(Z) \,dV,
\qquad R>0,
\end{equation}
except now the $\mathfrak{gl}(2,\HC)$-actions on the first and second
components are different: $f_1 \in (\varpi_2,\Zh)$ and $f_2 \in (\pi_r^0, \Zh)$.
This bilinear pairing is $\mathfrak{gl}(2,\HC)$-invariant, non-degenerate and
independent of the choice of $R>0$.
In other words, the representations $(\varpi_2,\Zh)$ and $(\pi_r^0, \Zh)$
are dual to each other.
The proof of these assertions is exactly the same as that of
Proposition 69 in \cite{FL1}.

Now, let us restrict $f_2$ to $(\pi_r^0,{\cal H}) \subset (\pi_r^0, \Zh)$.
Then, by (\ref{orthogonality}), this pairing annihilates all
$f_1 \in (\varpi_2, \Zh_2^- \oplus J_2 \oplus \Zh^+)$.
Hence this pairing descends to a pairing between $(\pi_r^0,{\cal H})$ and
$\bigl(\varpi_2, \Zh/(\Zh_2^- \oplus J_2 \oplus \Zh^+)\bigr)$.
By Proposition \ref{quotient-prop}, the latter representation is isomorphic
to $(\pi_l^0,{\cal H})$.
Thus we obtain the following expression for a $\mathfrak{gl}(2,\HC)$-invariant
bilinear pairing between $(\pi_l^0,{\cal H})$ and $(\pi_r^0,{\cal H})$:
\begin{equation}  \label{H-pairing2}
(\phi_1,\phi_2) =
\frac i{2\pi^3} \int_{Z \in U(2)_R} (\degt\phi_1)(Z) \cdot \phi_2(Z)
\,\frac{dV}{N(Z)}, \qquad \phi_1,\phi_2 \in {\cal H}.
\end{equation}
(This pairing is independent of the choice of $R>0$.)
Comparing the orthogonality relations (\ref{H-orthogonality}) and
(\ref{orthogonality}), we see that the pairings (\ref{H-pairing})
and (\ref{H-pairing2}) coincide when $\phi_1 \in {\cal H}^+$,
$\phi_2 \in {\cal H}^-$ (but differ for other choices of
$\phi_1$ and $\phi_2$).

\subsection{Multiplication Maps and Their Images}

In \cite{FL3} we prove the following result, its proof is very similar to
that of Theorem \ref{Zh2-decomposition}:

\begin{thm} [Theorem 7 in \cite{FL3}]
The representation $(\rho_1,\Zh)$ of $\mathfrak{gl}(2,\HC)$ has the
following decomposition into irreducible components:
$$
(\rho_1,\Zh) = (\rho_1,\Zh_1^-) \oplus (\rho_1,\Zh^0) \oplus (\rho_1,\Zh^+),
$$
where
\begin{align*}
\Zh^+ &= \BB C \text{-span of }
\bigl\{ t^l_{n\,\underline{m}}(Z) \cdot N(Z)^k;\: k \ge 0 \bigr\}, \\
\Zh_1^- &= \BB C \text{-span of }
\bigl\{ t^l_{n\,\underline{m}}(Z) \cdot N(Z)^k;\: k \le -(2l+2) \bigr\}, \\
\Zh^0 &= \BB C \text{-span of }
\bigl\{ t^l_{n\,\underline{m}}(Z) \cdot N(Z)^k;\: -(2l+1) \le k \le -1 \bigr\}
\end{align*}
(see Figure \ref{decomposition-fig2}).
\end{thm}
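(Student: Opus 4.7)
The proof proposal is to mirror the argument used in Theorem \ref{Zh2-decomposition}, adapted to the case $m=1$. Since $\rho_1=\varpi_1$, Lemma \ref{pi_m-algebra-action} already furnishes the Lie algebra action, and the $K$-type basis $t^l_{n\,\underline{m}}(Z)\cdot N(Z)^k$ from part (4) of the $K$-type proposition lets us reduce everything to tracking the effect of each generator of $\mathfrak{gl}(2,\HC)$ on a single basis vector indexed by $(l,m,n,k)$.

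First I would use the formula (\ref{B-action}) for
$\varpi_1\bigl(\begin{smallmatrix} 0 & B \\ 0 & 0\end{smallmatrix}\bigr)$ (which is identical for all $m$ since the $B$-action in Lemma \ref{pi_m-algebra-action} does not involve $m$) to see that on a basis vector $f_l(Z)\cdot N(Z)^k$ it produces a $(l-\tfrac12)$-piece with coefficient $\frac{2l+k+1}{2l+1}$ and an $(l+\tfrac12)$-piece with coefficient $\frac{k}{2l+1}$; hence the diagonal arrow dies at $k=0$ and the vertical arrow dies at $k=-(2l+1)$. Then I would redo the computation leading to (\ref{C-action}) with $m=1$, which replaces the factor $(m+1)=2$ by $2$ in the formula
$Z\cdot\partial f\cdot Z+(m+1)Zf$, and obtain coefficients $\frac{2l+k+2}{2l+1}$ and $\frac{k+1}{2l+1}$, so the diagonal arrow dies at $k=-1$ and the vertical arrow at $k=-(2l+2)$. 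The diagonal actions from
$\bigl(\begin{smallmatrix} A & 0 \\ 0 & 0\end{smallmatrix}\bigr)$ and
$\bigl(\begin{smallmatrix} 0 & 0 \\ 0 & D\end{smallmatrix}\bigr)$ just multiply basis vectors by scalars (for a fixed $K$-type), exactly as in the proof of Theorem \ref{Zh2-decomposition}.

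The four critical horizontal/diagonal loci for $m=1$ are therefore $k=0$, $k=-1$, $k=-(2l+1)$ and $k=-(2l+2)$. The crucial observation — and this is what differs qualitatively from the $m=2$ case — is that the pair $\{k=0,\,k=-1\}$ is now adjacent, and similarly $\{k=-(2l+1),\,k=-(2l+2)\}$. Thus the three regions $\{k\ge 0\}$, $\{-(2l+1)\le k\le -1\}$ and $\{k\le -(2l+2)\}$ cover the entire $(k,2l)$ lattice without leaving a "middle strip" of the sort that produced the quotient representations in the $m=2$ picture. Consequently $\Zh^+$, $\Zh^0$, $\Zh^-_1$ are genuine invariant subspaces with no arrows crossing between them, and together they span $\Zh$, giving the direct sum decomposition rather than a mere filtration.

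The final step is irreducibility of each of the three summands, proved exactly as in Theorem \ref{Zh2-decomposition}: since the nonzero coefficients in (\ref{B-action}) and its $m=1$ analogue of (\ref{C-action}) are nonvanishing throughout each region (one checks $2l+k+1\ne 0$ and $k\ne 0$ inside $\Zh^+$, and similarly in $\Zh^0$ and $\Zh^-_1$), the $\mathfrak{gl}(2,\HC)$-action moves any nonzero $K$-type within a region to every other $K$-type in that region, and the harmonic pieces $\partial f_l$ and $Z\cdot(\partial f_l)\cdot Z+Zf_l$ are nonzero whenever they formally appear (by the same trace identity used in the proof of Theorem \ref{Zh2-decomposition}). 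The main (in truth only) obstacle is bookkeeping — correctly locating the new critical lines for $m=1$ and verifying that no pair is separated by a gap — but no new ideas beyond the $m=2$ argument are needed.
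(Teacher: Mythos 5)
Your proposal is correct and follows exactly the route the paper indicates (it cites the result from [FL3] with the remark that the proof is ``very similar to that of Theorem \ref{Zh2-decomposition}''), and your $m=1$ coefficients $\frac{2l+k+2}{2l+1}$ and $\frac{k+1}{2l+1}$ for the $C$-action check out, as does the observation that the adjacency of the critical lines $k=0,\,k=-1$ and $k=-(2l+1),\,k=-(2l+2)$ is what turns the filtration of the $m=2$ case into a genuine direct sum here.
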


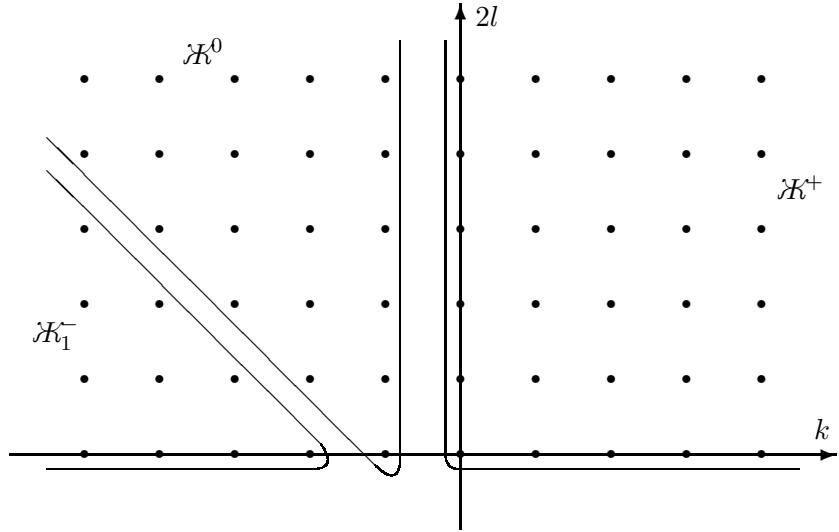
\begin{figure}
\begin{center}
\setlength{\unitlength}{1mm}
\begin{picture}(110,70)
\multiput(10,10)(10,0){10}{\circle*{1}}
\multiput(10,20)(10,0){10}{\circle*{1}}
\multiput(10,30)(10,0){10}{\circle*{1}}
\multiput(10,40)(10,0){10}{\circle*{1}}
\multiput(10,50)(10,0){10}{\circle*{1}}
\multiput(10,60)(10,0){10}{\circle*{1}}
\thicklines
\put(60,0){\vector(0,1){70}}
\put(0,10){\vector(1,0){110}}
\thinlines
\put(58,10){\line(0,1){55}}
\put(60,8){\line(1,0){45}}
\put(52,10){\line(0,1){55}}
\put(5,8){\line(1,0){35}}
\put(41.4,11.4){\line(-1,1){36.4}}
\put(48.6,8.6){\line(-1,1){43.6}}
\qbezier(58,10)(58,8)(60,8)
\qbezier(40,8)(43.8,8)(41.4,11.4)
\qbezier(52,10)(52,5.2)(48.6,8.6)
\put(62,67){$2l$}
\put(107,12){$k$}
\put(3,25){$\Zh_1^-$}
\put(23,62){$\Zh^0$}
\put(102,44){$\Zh^+$}
\end{picture}
\end{center}
\caption{Decomposition of $(\rho_1,\Zh)$ into irreducible components.}
\label{decomposition-fig2}
\end{figure}

Recall the natural $\mathfrak{gl}(2,\HC)$-equivariant multiplication maps:
$$
(\pi^0_l, {\cal H}^\pm) \otimes (\pi^0_r, {\cal H}^\pm) \to (\rho_1, \Zh)
$$
sending pure tensors
$$
\phi_1(Z_1) \otimes \phi_2(Z_2) \mapsto (\phi_1 \cdot \phi_2)(Z).
$$

\begin{lem}[Lemma 8 in \cite{FL3}]  \label{image-lemma}
Under the multiplication maps
$(\pi^0_l, {\cal H}^\pm) \otimes (\pi^0_r, {\cal H}^\pm) \to (\rho_1, \Zh)$,
\begin{enumerate}
\item
The image of ${\cal H}^+ \otimes {\cal H}^+$ in $\Zh$ is $\Zh^+$;
\item
The image of ${\cal H}^- \otimes {\cal H}^-$ in $\Zh$ is $\Zh_1^-$;
\item
The image of ${\cal H}^- \otimes {\cal H}^+$ in $\Zh$ is $\Zh^0$.
\end{enumerate}
\end{lem}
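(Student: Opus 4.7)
The plan is to exploit $\mathfrak{gl}(2,\HC)$-equivariance of the multiplication maps together with the irreducibility of $\Zh^+$, $\Zh_1^-$, $\Zh^0$ established in the preceding theorem. Since each multiplication map intertwines the actions, its image is a $\mathfrak{gl}(2,\HC)$-invariant subspace of $(\rho_1,\Zh)$, hence a sum of the three listed irreducible components. So in each of the three cases it suffices to show (i) the image is contained in one specified summand, and (ii) the image is nonzero. Irreducibility of that summand then forces equality.

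Part (1) is immediate: both factors are polynomials, so the image lies in $\Zh^+$, and it is nonzero since $1\cdot 1=1$.

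For parts (2) and (3), I would work in the matrix coefficient bases from the preceding proposition. The key ingredient is the Clebsch--Gordan bound $|l_1-l_2|\le l\le l_1+l_2$ for products of matrix coefficients: the product $t^{l_1}_{n_1\,\underline{m_1}}(Z)\cdot t^{l_2}_{n_2\,\underline{m_2}}(Z)$ is a polynomial of degree $2l_1+2l_2$, so by part (iii) of the preceding proposition it expands as $\sum_l c_l\,t^l_{n\,\underline{m}}(Z)\cdot N(Z)^{l_1+l_2-l}$ with $|l_1-l_2|\le l\le l_1+l_2$. Multiplying through by the additional $N(Z)^{-(2l_1+1)}$ coming from a factor in ${\cal H}^-$ (respectively two such factors in case (2)) and reading off the resulting exponent of $N(Z)$ shows that in case (2) the exponent satisfies $k=-l-l_1-l_2-2\le -(2l+2)$, placing the product in $\Zh_1^-$; in case (3) the exponent equals $l_2-l_1-l-1$, and the same inequalities $|l_1-l_2|\le l\le l_1+l_2$ translate directly into $-(2l+1)\le k\le -1$, placing the product in $\Zh^0$.

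Non-triviality is then checked by explicit examples: $N(Z)^{-1}\cdot N(Z)^{-1}=N(Z)^{-2}$ is a nonzero element of $\Zh_1^-$ (basis vector with $l=0$, $k=-2$), and $N(Z)^{-1}\cdot 1 = N(Z)^{-1}$ is a nonzero element of $\Zh^0$ (basis vector with $l=0$, $k=-1$). The only substantive step is the Clebsch--Gordan bound; it follows from the $SU(2)\times SU(2)$ structure of the harmonic polynomials, where under the action $Z\mapsto aZd^{-1}$ the span of $\{t^l_{n\,\underline{m}}\}_{m,n}$ realizes the irreducible $V_l\boxtimes V_l$, and $SU(2)\times SU(2)$-equivariance of multiplication together with Schur's lemma applied to $V_{l_1}\otimes V_{l_2}=\bigoplus_{l=|l_1-l_2|}^{l_1+l_2}V_l$ yields the stated range. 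This $K$-type argument is the single place where a nontrivial representation-theoretic computation enters.
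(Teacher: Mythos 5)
Your argument is correct. Note that the paper itself does not prove this lemma---it is imported as Lemma 8 of \cite{FL3}---but your proof follows exactly the method the author uses for the analogous proposition immediately afterward: the image is a $\mathfrak{gl}(2,\HC)$-invariant subspace, hence a sum of the irreducible summands $\Zh^+$, $\Zh^0$, $\Zh_1^-$, and one pins it down by combining homogeneity-degree bookkeeping with the $SU(2)\times SU(2)$ Clebsch--Gordan bound on which $V_l\boxtimes V_l$ can occur in a product of $K$-types, plus an explicit nonvanishing example. Your exponent computations ($k=-l-l_1-l_2-2\le-(2l+2)$ in case (2), and $k=l_2-l_1-l-1$ with $l\ge|l_1-l_2|$ giving $-(2l+1)\le k\le-1$ in case (3)) check out.
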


We turn our attention to the images under the natural
$\mathfrak{gl}(2,\HC)$-equivariant multiplication maps
$$
(\pi^0_l, {\cal H}^\pm) \otimes (\pi^0_l, {\cal H}^\pm) \otimes
(\pi^0_r, {\cal H}^\pm) \to (\varpi_2, \Zh)
\quad \text{and} \quad
(\pi^0_l, {\cal H}^\pm) \otimes (\varpi_1, V_i) \to (\varpi_2, \Zh),
$$
where $V_i$ ranges over the irreducible subrepresentations of
$(\rho_1, \Zh)$, i.e. $\Zh^+$, $\Zh_1^-$ and $\Zh^0$.

\begin{prop}
Under the multiplication maps
$(\pi^0_l, {\cal H}^\pm) \otimes (\pi^0_l, {\cal H}^\pm)
\otimes (\pi^0_r, {\cal H}^\pm) \to (\varpi_2, \Zh)$ and
$(\pi^0_l, {\cal H}^\pm) \otimes (\rho_1, V_i) \to (\varpi_2, \Zh)$,
where $V_i = \Zh^+, \Zh_1^-, \Zh^0$,
\begin{enumerate}
\item
The images of ${\cal H}^+ \otimes {\cal H}^+ \otimes {\cal H}^+$ and
${\cal H}^+ \otimes \Zh^+$ in $\Zh$ are $\Zh^+$;
\item
The images of ${\cal H}^- \otimes {\cal H}^- \otimes {\cal H}^-$ and
${\cal H}^- \otimes \Zh_1^-$ in $\Zh$ are $\Zh_2^-$;
\item
The images of ${\cal H}^+ \otimes {\cal H}^+ \otimes {\cal H}^-$,
${\cal H}^- \otimes \Zh^+$ and ${\cal H}^+ \otimes \Zh^0$ in $\Zh$ are $I_2^+$;
\item
The images of ${\cal H}^- \otimes {\cal H}^- \otimes {\cal H}^+$,
${\cal H}^+ \otimes \Zh^-_1$ and ${\cal H}^- \otimes \Zh^0$ in $\Zh$ are $I_2^-$.
\end{enumerate}
\end{prop}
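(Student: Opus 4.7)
The plan is to combine three ingredients: equivariance of the multiplication maps, the classification of $\mathfrak{gl}(2,\HC)$-invariant subspaces of $(\varpi_2,\Zh)$ given by Theorem \ref{Zh2-decomposition}, and Lemma \ref{image-lemma} identifying the images of ${\cal H}^\pm \otimes {\cal H}^\pm$ under two-fold multiplication. Since every multiplication map in sight is $\mathfrak{gl}(2,\HC)$-equivariant, each image is automatically a $\mathfrak{gl}(2,\HC)$-invariant subspace of $\Zh$; by Theorem \ref{Zh2-decomposition} it is therefore a (possibly trivial) sum of the basic pieces $\Zh^+$, $\Zh_2^-$, $J_2$, $I_2^+$, $I_2^-$, and there is only a very short list of possibilities.

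The first step is to reduce each three-factor case to a two-factor case. Pointwise multiplication is associative and commutative on functions, so $\phi_1\phi_2\phi_3 = \phi_2\cdot(\phi_1\phi_3)$; taking $\phi_1 \in {\cal H}^\pm$ (with $\pi^0_l$-action) and $\phi_3 \in {\cal H}^\pm$ (with $\pi^0_r$-action), Lemma \ref{image-lemma} says the products $\phi_1\phi_3$ span $\Zh^+$, $\Zh^-_1$, or $\Zh^0$ depending on the signs. Combined with equivariance of $(\pi^0_l,\widetilde{\cal H}) \otimes (\rho_1,\widetilde{\Zh}) \to (\varpi_2,\widetilde{\Zh})$, this identifies the image of each three-fold multiplication with the image of the listed two-fold multiplication and, simultaneously, shows that the several two-fold descriptions in each item coincide as subsets of $\Zh$.

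It then suffices to compute the image of each ${\cal H}^\pm \otimes V_i$ with $V_i \in \{\Zh^+, \Zh^-_1, \Zh^0\}$. I would use the basis (\ref{Zh-basis}) together with the Clebsch--Gordan-style decomposition $t^l(Z) \cdot t^{l'}(Z) = \sum_{L=|l-l'|}^{l+l'} (\text{harmonic of degree } 2L) \cdot N(Z)^{l+l'-L}$ coming from the $SU(2) \times SU(2)$-content of $\Zh^+$. Tracking the exponent of $N(Z)$ in a product of basis elements then places the image inside one of the subspaces of Theorem \ref{Zh2-decomposition}; the key bound $L \le l+l'$ supplies the required inequality in every case. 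For instance, in the ${\cal H}^- \otimes \Zh^-_1$ case the product $t^l N(Z)^{-(2l+1)} \cdot t^{l'} N(Z)^{k'}$ with $k' \le -(2l'+2)$ yields $N$-exponents $l'-l-L-1+k' \le -(l+l'+L+3) \le -(2L+3)$, so the image is contained in $\Zh^-_2$; since $\Zh^-_2$ is irreducible and the image is a nonzero invariant subspace, the image equals $\Zh^-_2$. The same pattern handles the $I_2^+$ and $I_2^-$ cases: after showing image $\subseteq I_2^\pm$ by exponent bookkeeping, one checks that the image is not contained in $\Zh^+ \oplus J_2$ or $\Zh^-_2 \oplus J_2$ using a single witness, e.g., $1 \cdot N(Z)^{-1} = N(Z)^{-1}$ for $I_2^+$ and $N(Z)^{-1} \cdot N(Z)^{-1} = N(Z)^{-2}$ for $I_2^-$ (both witnesses have $l=0$, so they fall outside $J_2$ and outside the corresponding ``lower'' piece). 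Irreducibility of the quotients $I_2^+/(\Zh^+ \oplus J_2)$ and $I_2^-/(\Zh^-_2 \oplus J_2)$ from Theorem \ref{Zh2-decomposition} then forces the image to be all of $I_2^+$ or $I_2^-$ respectively.

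The main obstacle is bookkeeping rather than any substantial new idea: one must verify the exponent inequality in each of the four cases and pick a witness that lies outside every smaller invariant subspace. Everything else --- the reduction to a short list of invariant subspaces and the upgrade from ``contained in'' to ``equal to'' --- is immediate from Theorem \ref{Zh2-decomposition} and the irreducibility of its pieces, so no new representation-theoretic input is required beyond what has already been established.
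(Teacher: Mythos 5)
Your proposal is correct, and its overall skeleton matches the paper's: reduce the three-fold multiplications to two-fold ones via Lemma \ref{image-lemma}, observe that each image is a $\mathfrak{gl}(2,\HC)$-invariant subspace so Theorem \ref{Zh2-decomposition} leaves only a short list of candidates, and then pin down the answer with an explicit witness plus irreducibility. Where you diverge is in the containment step. The paper only proves cases (1) and (3) directly and obtains (2) and (4) by applying the inversion $\bigl(\begin{smallmatrix} 0 & 1 \\ 1 & 0 \end{smallmatrix}\bigr)$; moreover, for (3) it does not bound the $N(Z)$-exponent of every basis product, but argues by contradiction: if the image $\widetilde I$ strictly contained $I_2^+$ it would contain $\Zh_2^-$ and hence $N(Z)^{-3}$, and that single function is excluded because $V_l\otimes V_{l'}$ contains the trivial $SU(2)$-type only when $l=l'$, in which case the homogeneity degree of the spanning products is at least $-2$. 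Your route instead uses the full decomposition $t^l\cdot t^{l'}=\sum_{L=|l-l'|}^{l+l'}h_{2L}\,N(Z)^{l+l'-L}$ (the lower bound $L\ge|l-l'|$ being the general-$L$ version of the same $SU(2)\times SU(2)$ fact the paper uses at $L=0$) and verifies the exponent inequality case by case. This costs more bookkeeping --- four containments instead of one exclusion plus a symmetry --- but it is uniform, treats all four items on the same footing without invoking the inversion element, and makes the inclusion $\widetilde I\subseteq I_2^\pm$ (resp. $\Zh_2^-$) direct rather than by contradiction. Both arguments are sound; just make sure, when writing yours up, to state and justify the bound $L\ge|l-l'|$ explicitly, since it is the one nontrivial representation-theoretic input in your version.
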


\begin{proof}
By Lemma \ref{image-lemma}, the multiplication map
${\cal H}^+ \otimes {\cal H}^+ \otimes {\cal H}^+ \to \Zh$
factors through the multiplication map
${\cal H}^+ \otimes \Zh^+ \to \Zh$, hence they have the same images.
Since the product of polynomials is another polynomial, this image lies in
$\Zh^+$. The representation $(\varpi_2, \Zh)$ is irreducible,
so the image is all of $\Zh^+$.

By Lemma \ref{image-lemma}, the map
${\cal H}^+ \otimes {\cal H}^+ \otimes {\cal H}^- \to \Zh$
factors through the maps
${\cal H}^- \otimes \Zh^+ \to \Zh$ and ${\cal H}^+ \otimes \Zh^0 \to \Zh$,
hence they have the same images. Let us denote this image by $\widetilde{I}$.
Clearly, $\widetilde{I}$ contains the function $N(Z)^{-1}$,
which generates $I_2^+$, thus $I_2^+ \subset \widetilde{I}$.
It remains to show that $\widetilde{I} \subset I_2^+$.
By Theorem \ref{Zh2-decomposition}, if $I_2^+ \subsetneq \widetilde{I}$,
then $\widetilde{I}$ also contains $\Zh_2^-$ and hence functions
$N(Z)^k$ with $k \le -3$.
Thus it is sufficient to prove that $\widetilde{I}$ cannot contain
$N(Z)^{-3}$.

By construction, $\widetilde{I}$ is spanned by
\begin{equation}  \label{N(Z)^{-3}}
t^l_{n\,\underline{m}}(Z) \cdot t^{l'}_{n'\,\underline{m'}}(Z) \cdot N(Z)^{k-2l'-1} ,
\qquad k \ge 0.
\end{equation}
Note that if $V_l$ and $V_{l'}$ are two irreducible representations of $SU(2)$
of dimensions $2l+1$ and $2l'+1$ respectively, then their tensor product
contains a copy of the trivial representation if and only if $l=l'$.
This means that a linear combination of the functions (\ref{N(Z)^{-3}})
can express $N(Z)^{-3}$ only if $l=l'$.
But then the homogeneity degree of (\ref{N(Z)^{-3}}) is $2(k-1) \ge -2$.
Therefore, $N(Z)^{-3} \notin \widetilde{I}$.

Finally the remaining parts of the proposition follow by applying
$\bigl(\begin{smallmatrix} 0 & 1 \\ 1 & 0 \end{smallmatrix}\bigr)$
to the assertions we have proved. For example, applying
$(\pi^0_l \otimes \rho_1)
\bigl(\begin{smallmatrix} 0 & 1 \\ 1 & 0 \end{smallmatrix}\bigr)$
to the left hand side of ${\cal H}^- \otimes \Zh^+ \to I_2^+$ and
$\varpi_2 \bigl(\begin{smallmatrix} 0 & 1 \\ 1 & 0 \end{smallmatrix}\bigr)$
to the right hand side, we see that the image of
${\cal H}^+ \otimes \Zh^-_1$ is $I_2^-$.
\end{proof}

\section{The One-Loop and Two-Loop Ladder Diagrams}  \label{fd-section}

In this section we introduce the integrals $l^{(1)}$ and $l^{(2)}$ represented
by the one- and two-loop ladder diagrams.
Then we introduce the integral operators $L^{(1)}$ and $L^{(2)}$ on
${\cal H}^+ \otimes {\cal H}^+$.
We also introduce auxiliary integral operators $\tilde L^{(2)}$ and
$\mathring{L}^{(2)}$ closely related to $L^{(2)}$.

\subsection{Ladder Integrals}

\begin{figure}
\begin{center}
\begin{subfigure}{0.3\textwidth}
\centering
\setlength{\unitlength}{1mm}
\begin{picture}(30,28)
\put(15,14){\circle*{2}}
\put(15,4){\line(0,1){20}}
\put(5,14){\line(1,0){20}}
\put(0,13){$Z_2$}
\put(14,25){$Z_1$}
\put(26,13){$W_2$}
\put(14,0){$W_1$}
\put(17,15){$T$}
\end{picture}
\end{subfigure}
\qquad
\begin{subfigure}{0.3\textwidth}
\centering
\setlength{\unitlength}{1mm}
\begin{picture}(40,28)
\put(14,14){\circle*{2}}
\put(26,14){\circle*{2}}
\put(6,14){\line(1,0){28}}
\put(14,14){\line(3,5){6}}
\put(14,14){\line(3,-5){6}}
\put(26,14){\line(-3,5){6}}
\put(26,14){\line(-3,-5){6}}
\multiput(20,4)(0,2){10}{\line(0,1){1}}
\put(0,13){$Z_2$}
\put(19,25){$Z_1$}
\put(36,13){$W_2$}
\put(19,0){$W_1$}
\put(27,16){$T_2$}
\put(10,16){$T_1$}
\end{picture}
\end{subfigure}
\end{center}
\caption{The one-loop ladder diagram (left) and
the two-loop ladder diagram (right).}
\label{12ladder}
\end{figure}
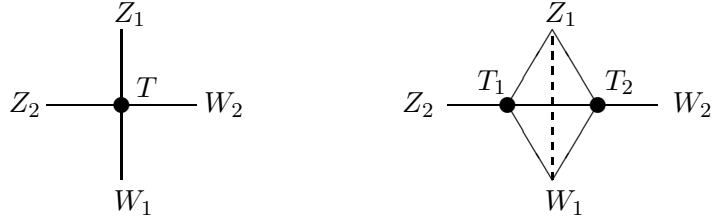



As in \cite{DHSS}, we use the coordinate space variable notation
(as opposed to the momentum notation).
With this choice of variable notation, the one- and two-loop ladder diagrams
are represented as in Figure \ref{12ladder}.
The one-loop ladder integral is
$$
l^{(1)}(Z_1,Z_2;W_1,W_2) =
\frac i{2\pi^3} \int_{T \in U(2)}
\frac{dV}{N(Z_1-T) \cdot N(Z_2-T) \cdot N(W_1-T) \cdot N(W_2-T)}.
$$
Next, we have the two-loop ladder integral:
\begin{equation}  \label{l2-rel}
l^{(2)}(Z_1,Z_2;W_1,W_2) = N(Z_1-W_1) \cdot \tilde l^{(2)}(Z_1,Z_2;W_1,W_2),
\end{equation}
where
\begin{multline*}
-4\pi^6 \cdot \tilde l^{(2)}(Z_1,Z_2;W_1,W_2) \\
= \iint_{\genfrac{}{}{0pt}{}{T_1 \in U(2)_{r_1}}{T_2 \in U(2)_{r_2}}}
\frac{|T_1-T_2|^{-2} \, dV_{T_1} \, dV_{T_2}}{|Z_1-T_1|^2 \cdot |Z_2-T_1|^2 \cdot
|W_1-T_1|^2 \cdot |Z_1-T_2|^2 \cdot |W_1-T_2|^2 \cdot |W_2-T_2|^2},
\end{multline*}
where we write $|Z-W|^2$ for $N(Z-W)$ in order to fit the formula on page.
The roles of variables $T_1$ and $T_2$ are symmetric, so we shall assume that
$r_1 > r_2 > 0$.
The purpose of the factor $N(Z_1-W_1)$ in (\ref{l2-rel}) is to give $l^{(2)}$
desired conformal properties (see Lemma \ref{conformal}).

These are the only ladder integrals that we consider in this paper.
In general, one obtains the integral from the ladder diagram by building
a rational function by writing a factor
$$
\begin{cases}
N(Y_i-Y_j)^{-1} &
\text{if there is a solid edge joining variables $Y_i$ and $Y_j$};  \\
N(Y_i-Y_j) & \text{if there is a dashed edge joining variables $Y_i$ and $Y_j$},
\end{cases}
$$
then integrating over the solid vertices.
If desired, by Corollary 90 in \cite{FL1} the integrals over various $U(2)_R$
can be replaced by integrals over the Minkowski space $\BB M$ via an
appropriate ``Cayley transform''.
The ladder diagrams are obtained by starting with the one-loop ladder diagram
(Figure \ref{12ladder}) and adding the so-called ``slingshots'',
as explained in \cite{DHSS}.

From Lemmas \ref{Z-W}, \ref{Jacobian_lemma} and the fact that the integrand
is a closed differential form we immediately obtain:

\begin{lem}  \label{conformal}
For each $h = \bigl(\begin{smallmatrix} a' & b' \\
c' & d' \end{smallmatrix}\bigr) \in GL(2,\HC)$
sufficiently close to the identity we have:
\begin{multline*}
l^{(1)}(\tilde Z_1, \tilde Z_2; \tilde W_1, \tilde W_2) \\
= N(a'-Z_1c') \cdot N(cZ_2+d) \cdot N(cW_1+d) \cdot N(a'-W_2c') \cdot
l^{(1)}(Z_1,Z_2;W_1,W_2),
\end{multline*}
\begin{multline*}
\tilde l^{(2)}(\tilde Z_1, \tilde Z_2; \tilde W_1, \tilde W_2)
= N(cZ_1+d) \cdot N(a'-Z_1c') \cdot N(cZ_2+d) \\
\cdot N(cW_1+d) \cdot N(a'-W_1c') \cdot N(a'-W_2c') \cdot
\tilde l^{(2)}(Z_1,Z_2;W_1,W_2),
\end{multline*}
\begin{multline*}
l^{(2)}(\tilde Z_1, \tilde Z_2; \tilde W_1, \tilde W_2) \\
= N(a'-Z_1c') \cdot N(cZ_2+d) \cdot N(cW_1+d) \cdot N(a'-W_2c') \cdot
l^{(2)}(Z_1,Z_2;W_1,W_2),
\end{multline*}
where $h^{-1} = \bigl(\begin{smallmatrix} a & b \\
c & d \end{smallmatrix}\bigr)$,
$\tilde Z_i = (aZ_i+b)(cZ_i+d)^{-1}$ and $\tilde W_i = (aW_i+b)(cW_i+d)^{-1}$,
$i=1,2$.
\end{lem}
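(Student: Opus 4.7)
All three identities will be established by a single change-of-variables argument. Let $h = \bigl(\begin{smallmatrix} a & b \\ c & d \end{smallmatrix}\bigr) \in GL(2,\HC)$ be close to the identity, and write $\tilde S = (aS+b)(cS+d)^{-1}$. The plan is to substitute $T \mapsto \tilde T$ (respectively $T_j \mapsto \tilde T_j$, $j=1,2$) in the integral defining $l^{(1)}(\tilde Z_1,\tilde Z_2;\tilde W_1,\tilde W_2)$ (respectively $\tilde l^{(2)}(\tilde Z_1,\ldots)$). Since the integrand is a meromorphic 4-form (respectively 8-form) and $h$ is close to the identity, the pushforward of $U(2)$ (or of $U(2)_{r_1}\times U(2)_{r_2}$) is homologous to the original cycle in the complement of the singular divisors, so the value of the integral does not change under this replacement of contour.

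For the one-loop integral, I will apply Lemma~\ref{Jacobian_lemma} to convert $dV_T$ and Lemma~\ref{Z-W} to convert each factor $N(\tilde Y - \tilde T)^{-1}$, where $Y$ ranges over $Z_1,Z_2,W_1,W_2$. The two equivalent factorizations from Lemma~\ref{Z-W} give
$$
N(\tilde Y - \tilde T) \;=\; N(a'-Tc')^{-1} N(Y-T) N(cY+d)^{-1} \;=\; N(a'-Yc')^{-1} N(Y-T) N(cT+d)^{-1}.
$$
The Jacobian contributes $N(cT+d)^{-2} N(a'-Tc')^{-2}$, so for the $T$-dependent factors to disappear I must use the first form for exactly two of the $Y$'s and the second form for the other two. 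Choosing the first form for $Y = Z_2,W_1$ and the second form for $Y = Z_1,W_2$ produces exactly the four external factors $N(a'-Z_1c')\cdot N(cZ_2+d)\cdot N(cW_1+d)\cdot N(a'-W_2c')$, which yields the first identity.

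For $\tilde l^{(2)}$ the same strategy works, but now each integration variable $T_j$ appears in four norm factors: three of type $N(\tilde Y - \tilde T_j)^{-1}$ plus the common factor $N(\tilde T_1 - \tilde T_2)^{-1}$. The Jacobian for each $T_j$ is again $N(cT_j+d)^{-2} N(a'-T_j c')^{-2}$. The key bookkeeping step is to choose, independently for each $Y_i - T_j$ and for $T_1 - T_2$, one of the two Lemma~\ref{Z-W} factorizations so that for each $j=1,2$ the total exponents of $N(cT_j+d)$ and $N(a'-T_jc')$ coming from these four norm factors are both exactly $2$. The remaining $Y$-dependent factors then give $N(cY+d)$ or $N(a'-Yc')$ according to the choice made. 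A consistent assignment (e.g.\ Form~A for the $Z_2, W_1$ terms and the $Z_1$--$T_1$, $W_1$--$T_2$ pairings on the diagonal, Form~B elsewhere, and a matching choice for $T_1-T_2$) produces precisely the six external factors
$$
N(cZ_1+d) N(a'-Z_1c') N(cZ_2+d) N(cW_1+d) N(a'-W_1c') N(a'-W_2c'),
$$
giving the second identity. This combinatorial bookkeeping, which essentially amounts to balancing how many times each $T_j$ gets a ``left'' versus ``right'' exponent from Lemma~\ref{Z-W}, is the one genuinely nontrivial step; everything else is substitution.

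Finally, the third identity is immediate: by (\ref{l2-rel}) we have $l^{(2)} = N(Z_1-W_1) \cdot \tilde l^{(2)}$, and Lemma~\ref{Z-W} gives $N(\tilde Z_1 - \tilde W_1) = N(a'-W_1c')^{-1} N(Z_1-W_1) N(cZ_1+d)^{-1}$. Multiplying this by the conformal factor just obtained for $\tilde l^{(2)}$, the pair $N(cZ_1+d)\cdot N(a'-W_1c')$ cancels and the remaining four factors are exactly those claimed for $l^{(2)}$.
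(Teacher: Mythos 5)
Your proposal is correct and follows exactly the route the paper intends: the paper proves this lemma in one line by citing Lemma~\ref{Z-W}, Lemma~\ref{Jacobian_lemma}, and the closedness of the integrand (justifying the contour deformation), and your argument is a careful expansion of precisely that, including the correct exponent bookkeeping for the two factorizations of $N(\tilde Y-\tilde T)$ and the cancellation of $N(cZ_1+d)\cdot N(a'-W_1c')$ in passing from $\tilde l^{(2)}$ to $l^{(2)}$ via (\ref{l2-rel}).
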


\subsection{Integral Operators Corresponding to the Ladder Diagrams}

Using bilinear pairings (\ref{H-pairing}) and (\ref{pairing}) we obtain
integral operators
\begin{align*}
L^{(1)} &: (\pi^0_l, {\cal H}^+) \otimes (\pi^0_r, {\cal H}^+) \to
(\pi^0_l, {\cal H}^+) \otimes (\pi^0_r, {\cal H}^+),  \\
\tilde L^{(2)} &: (\rho_1, \Zh) \otimes (\pi^0_r, {\cal H}^+) \to
(\rho_1, \Zh^+) \otimes (\pi^0_r, {\cal H}^+),  \\
\mathring{L}^{(2)} &: (\varpi_2, \Zh) \otimes (\pi^0_r, {\cal H}^+) \to
(\pi^0_l, \Zh^+) \otimes (\pi^0_r, {\cal H}^+)
\end{align*}
that have  $l^{(1)}$, $\tilde l^{(2)}$ and $l^{(2)}$ as their kernels:
\begin{multline*}
L^{(1)} (\phi_1 \otimes \phi_2)(W_1,W_2) \\
= \frac1{(2\pi^2)^2} \iint_{\genfrac{}{}{0pt}{}{Z_1 \in S^3_{R_1}}{Z_2 \in S^3_{R_2}}}
l^{(1)}(Z_1,Z_2;W_1,W_2) \cdot (\degt_{Z_1} \phi_1)(Z_1)
\cdot (\degt_{Z_2} \phi_2)(Z_2) \,\frac{dS_1\,dS_2}{R_1R_2},
\end{multline*}
$$
\tilde L^{(2)} (f \otimes \phi)(W_1,W_2)
= \frac{i}{4\pi^5} \iint_{\genfrac{}{}{0pt}{}{Z_1 \in U(2)_{R_1}}{Z_2 \in S^3_{R_2}}}
\tilde l^{(2)}(Z_1,Z_2;W_1,W_2) \cdot f(Z_1) \cdot (\degt_{Z_2} \phi)(Z_2)
\,dV_1\,\frac{dS_2}{R_2},
$$
$$
\mathring{L}^{(2)} (f \otimes \phi)(W_1,W_2)
= \frac{i}{4\pi^5} \iint_{\genfrac{}{}{0pt}{}{Z_1 \in U(2)_{R_1}}{Z_2 \in S^3_{R_2}}}
l^{(2)}(Z_1,Z_2;W_1,W_2) \cdot f(Z_1) \cdot (\degt_{Z_2} \phi)(Z_2)
\,dV_1\,\frac{dS_2}{R_2},
$$
where $\phi, \phi_1, \phi_2 \in {\cal H}^+$, $f \in \Zh$.
In the case of $L^{(1)}$ we require $R_1, R_2 > 1$, $W_1,W_2 \in \BB D^+$.
In the cases of $\tilde L^{(2)}$ and $\mathring{L}^{(2)}$ we require
$R_1, R_2 > r_1$, $W_1, W_2 \in \BB D^+_{r_2}$.
It follows from the $\mathfrak{gl}(2,\HC)$-invariance of the bilinear
pairings (\ref{H-pairing}), (\ref{pairing}), (\ref{pairing2}) and
Lemma \ref{conformal} that these three integral operators are
$\mathfrak{gl}(2,\HC)$-equivariant.

\begin{rem}
Strictly speaking, we need to show that the functions
$$
L^{(1)} (\phi_1 \otimes \phi_2)(W_1,W_2), \qquad
\tilde L^{(2)} (f \otimes \phi)(W_1,W_2) \qquad \text{and} \qquad
\mathring{L}^{(2)} (f \otimes \phi)(W_1,W_2)
$$
are {\em polynomials} in $W_1$ and $W_2$ as opposed to, say, smooth functions.
This will be done later.
\end{rem}

Finally, we define an integral operator
$$
L^{(2)} : {\cal H}^+ \otimes {\cal H}^+ \to \Zh^+ \otimes {\cal H}^+
$$
using a bilinear pairing (\ref{H-pairing2}) that also has $l^{(2)}$
as its kernel:
\begin{multline*}
L^{(2)} (\phi_1 \otimes \phi_2)(W_1,W_2) \\
= \frac{i}{4\pi^5} \iint_{\genfrac{}{}{0pt}{}{Z_1 \in U(2)_{R_1}}{Z_2 \in S^3_{R_2}}}
l^{(2)}(Z_1,Z_2;W_1,W_2) \cdot (\degt_{Z_1} \phi_1)(Z_1)
\cdot (\degt_{Z_2} \phi_2)(Z_2) \,\frac{dV_1}{N(Z_1)} \frac{dS_2}{R_2},
\end{multline*}
where $\phi_1, \phi_2 \in {\cal H}^+$, $W_1, W_2 \in \BB D^+_{r_2}$,
$R_1, R_2 > r_1$, as before.

\begin{rem}  \label{discussion}
At this point it is easy to see that $L^{(2)} (\phi_1 \otimes \phi_2)(W_1,W_2)$
is harmonic with respect to the $W_2$ variable, but it is not at all obvious
whether it is harmonic with respect to the $W_1$ variable or not.
Since $l^{(2)}(Z_1,Z_2;W_1,W_2)$ may or may not be harmonic with respect to
the $Z_1$ variable, it is also not clear if the operator $L^{(2)}$ is
$\mathfrak{gl}(2,\HC)$-equivariant.
However, we will see later (Theorem \ref{main-thm}) that
$L^{(2)} (\phi_1 \otimes \phi_2)(W_1,W_2)$ is indeed harmonic with respect to
the $W_1$ variable and that we have a $\mathfrak{gl}(2,\HC)$-equivariant map
(\ref{L2}).
\begin{equation}  \label{L2}
L^{(2)} : (\pi^0_l, {\cal H}^+) \otimes (\pi^0_r, {\cal H}^+) \to
(\pi^0_l, {\cal H}^+) \otimes (\pi^0_r, {\cal H}^+).
\end{equation}
\end{rem}

One of the central results of \cite{FL1} was to show that the operator
$L^{(1)}$ corresponding to the one-loop ladder diagram is the
$\mathfrak{gl}(2,\HC)$-equivariant projection onto the first irreducible
component (see (\ref{decomp-intro}), (\ref{tensor-decomp}))
\begin{equation*}
L^{(1)}: (\pi^0_l, {\cal H}^+) \otimes (\pi^0_r, {\cal H}^+)
\twoheadrightarrow (\rho_1,\Zh^+) \hookrightarrow
(\pi^0_l, {\cal H}^+) \otimes (\pi^0_r, {\cal H}^+)
\end{equation*}
(the multiplication map followed by the embedding) such that
$L^{(1)}(1 \otimes 1) = 1 \otimes 1$.
The goal of this article is to understand the map (\ref{L2}).

We conclude this subsection by observing some relations between operators
$\tilde L^{(2)}$, $\mathring{L}^{(2)}$ and $L^{(2)}$.
From (\ref{l2-rel}) we obtain the following relation:
\begin{equation}  \label{LL1}
\mathring{L}^{(2)} = \tilde L^{(2)} \circ N(Z_1-W_1),
\end{equation}
where (by abuse of notation) $N(Z_1-W_1)$ denotes multiplication by $N(Z_1-W_1)$.
We also have:
\begin{equation}  \label{LL2}
L^{(2)} = \mathring{L}^{(2)} \circ \bigl( N(Z_1)^{-1} \cdot \degt_{Z_1} \bigr).
\end{equation}

\section{Equivariant Maps $\tilde L^{(2)}$ and
$(\rho_1,\Zh) \to (\pi_l^0, {\cal H}) \otimes (\pi_r^0,{\cal H})$}  \label{I_R-section}

\subsection{Equivariant Maps
$(\rho_1,\Zh) \to (\pi_l^0, {\cal H}) \otimes (\pi_r^0,{\cal H})$}

A tensor product $(\pi_l^0, {\cal H}^+) \otimes (\pi_r^0,{\cal H}^+)$ of
representations of $\mathfrak{gl}(2,\HC)$ decomposes into a direct sum
of irreducible subrepresentations, one of which is $(\rho_1,\Zh^+)$.
This decomposition is stated precisely in equation (\ref{tensor-decomp}).
The irreducible component $(\rho_1,\Zh^+)$ has multiplicity one
and is generated by $1 \otimes 1 \in {\cal H}^+ \otimes {\cal H}^+$.
Thus we have a $\mathfrak{gl}(2,\HC)$-equivariant map
\begin{equation*}
I: (\rho_1,\Zh^+) \hookrightarrow
(\pi_l^0, {\cal H}^+) \otimes (\pi_r^0,{\cal H}^+),
\end{equation*}
which is unique up to multiplication by a scalar.
This scalar can be pinned down by a requirement $I(1) = 1 \otimes 1$.

We consider a map
\begin{equation}  \label{fork}
\Zh \ni f(Z) \quad \mapsto \quad (I_R f)(W_1,W_2) =
\frac i{2\pi^3} \int_{Z \in U(2)_R} \frac{f(Z) \,dV}{N(Z-W_1) \cdot N(Z-W_2)}
\quad \in \B{{\cal H} \otimes {\cal H}},
\end{equation}
where $\B{{\cal H} \otimes {\cal H}}$ denotes the Hilbert space obtained by
completing ${\cal H} \otimes {\cal H}$ with respect to the unitary structure
coming from the tensor product of unitary representations
$(\pi^0_l, {\cal H})$ and $(\pi^0_r, {\cal H})$.
If $W_1, W_2 \in \BB D^+_R$ or $W_1, W_2 \in \BB D^-_R$, the integrand has
no singularities and the result is a holomorphic function in two variables
$W_1, W_2$ which is harmonic in each variable separately.
Recall that $M$ denotes the multiplication map (\ref{M}).




\begin{thm}[Theorem 12 in \cite{FL3}]  \label{embedding-thm}
The map $f(Z) \mapsto (I_R f)(W_1,W_2)$ has the following properties:
\begin{enumerate}
\item
If $W_1, W_2 \in \BB D^+_R$, then $I_R: \Zh \to {\cal H}^+ \otimes {\cal H}^+$,
$$
M \circ (I_R f)(W_1,W_2) = f \quad \text{if $f \in \Zh^+$}
\qquad \text{and} \qquad
(I_R f)(W_1,W_2) = 0 \quad \text{if $f \in \Zh^-_1 \oplus \Zh^0$}.
$$
The restriction of $I_R$ to $\Zh^+$ coincides with the map $I$.
\item
If $W_1, W_2 \in \BB D^-_R$, then $I_R: \Zh \to {\cal H}^- \otimes {\cal H}^-$,
$$
M \circ (I_R f)(W_1,W_2) = f \quad \text{if $f \in \Zh^-_1$}
\qquad \text{and} \qquad
(I_R f)(W_1,W_2) = 0 \quad \text{if $f \in \Zh^0 \oplus \Zh^+$}.
$$
\end{enumerate}
\end{thm}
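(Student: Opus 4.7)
The plan is to verify the theorem by direct computation on the $K$-type basis of $\Zh$ furnished by part 4 of the proposition on $K$-type bases. Recall that $\Zh$ is spanned by the functions $t^l_{n\,\underline{m}}(Z) \cdot N(Z)^k$ with $k \in \BB Z$, and the irreducible summands $\Zh^+$, $\Zh^0$, $\Zh_1^-$ of $(\rho_1,\Zh)$ are cut out by the ranges $k \ge 0$, $-(2l+1) \le k \le -1$, and $k \le -(2l+2)$ respectively. The two cases $W_1, W_2 \in \BB D^{\pm}_R$ are mirror images of each other; I focus on $\BB D^+_R$.

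For $Z \in U(2)_R$ and $W_i \in \BB D^+_R$ we have $|W_i| < |Z|$, so the matrix coefficient expansion (\ref{1/N-expansion}) (with the roles of $Z$ and $W$ interchanged) applies to each factor $1/N(Z-W_i)$. Multiplying the two expansions, interchanging the (absolutely convergent on compact sets) sums with the integral, and rewriting in terms of the pairing (\ref{pairing}), I would obtain
\begin{equation*}
(I_R f)(W_1, W_2) = \sum t^{l_1}_{m_1\,\underline{n_1}}(W_1) \cdot t^{l_2}_{m_2\,\underline{n_2}}(W_2) \cdot \bigl\langle f(Z),\, t^{l_1}_{n_1\,\underline{m_1}}(Z^{-1}) \cdot t^{l_2}_{n_2\,\underline{m_2}}(Z^{-1}) \cdot N(Z)^{-2} \bigr\rangle_Z.
\end{equation*}
The crucial claim is that every test function $t^{l_1}(Z^{-1}) \cdot t^{l_2}(Z^{-1}) \cdot N(Z)^{-2}$ lies in the $\BB C$-span of $\bigl\{ t^l_{m\,\underline{n}}(Z^{-1}) \cdot N(Z)^{-k-2} : k \ge 0 \bigr\}$. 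Using the proportionality $t^l_{n\,\underline{m}}(Z^{-1}) \propto t^l_{-m\,\underline{-n}}(Z) \cdot N(Z)^{-2l}$, the product reduces to a product of two harmonic polynomials in $Z$ times $N(Z)^{-2l_1-2l_2-2}$. The Fischer decomposition of this polynomial product combined with the $SU(2) \times SU(2)$ Clebsch-Gordan rule restricts the harmonic parts to types $V_l \boxtimes V_l$ with $|l_1 - l_2| \le l \le l_1 + l_2$, so the expansion only involves powers $N(Z)^{l_1+l_2-l}$ with $l_1+l_2-l \ge 0$. Converting back via $t^l_{m\,\underline{n}}(Z) \propto t^l_{-n\,\underline{-m}}(Z^{-1}) \cdot N(Z)^{2l}$ yields the claim with $k = l_1 + l_2 - l \ge 0$.

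Combined with the orthogonality relations (\ref{orthogonality}), the pairing then vanishes on every basis element $f = t^{l'}_{n'\,\underline{m'}}(Z) \cdot N(Z)^{k'}$ with $k' < 0$, proving that $I_R$ annihilates $\Zh^0 \oplus \Zh_1^-$. For $f \in \Zh^+$, only finitely many indices contribute (since $k' = l_1+l_2-l'$ fixes $l_1+l_2$), and $(I_R f)(W_1,W_2)$ becomes a finite linear combination of $t^{l_1}(W_1) \cdot t^{l_2}(W_2)$, which lies in $\mathcal{H}^+ \otimes \mathcal{H}^+$ by the $K$-type basis proposition. The identity $M \circ I_R f = f$ is precisely the statement that re-collapsing the Fischer/Clebsch-Gordan decomposition by setting $W_1 = W_2$ recovers the polynomial product $t^{l'} \cdot N^{k'}$ that generated it.

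Finally, the identification $I_R|_{\Zh^+} = I$ reduces to three observations: $I_R|_{\Zh^+}$ is $\mathfrak{gl}(2,\HC)$-equivariant (by the invariance of (\ref{pairing}) combined with Lemma \ref{Jacobian_lemma} for a change of variables in $Z \in U(2)_R$); $(\rho_1,\Zh^+)$ has multiplicity one in $(\pi^0_l,\mathcal{H}^+) \otimes (\pi^0_r,\mathcal{H}^+)$, so the embedding is unique up to scalar; and the normalization $I_R(1) = 1 \otimes 1$ follows by computing that only the $l_1 = l_2 = 0$ term survives, reducing to $\langle 1, N(Z)^{-2}\rangle = 1$ by (\ref{orthogonality}). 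The case $W_1, W_2 \in \BB D^-_R$ is the symmetric counterpart: one uses the other form of (\ref{1/N-expansion}) valid for $|W_i| > |Z|$, and the same Clebsch-Gordan bookkeeping now forces the surviving basis elements of $f$ to satisfy $k' \le -(2l'+2)$, matching the $\Zh_1^-$ summand. The main technical obstacle I expect is the degree bookkeeping in the Clebsch-Gordan/Fischer step; once that is in place, everything else follows from orthogonality and the uniqueness of the equivariant embedding.
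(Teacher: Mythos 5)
The paper does not actually prove this statement --- it is imported verbatim from \cite{FL3} (Theorem 12 and Corollary 14), so there is no internal proof to compare against. Judged on its own, your argument is sound and is exactly the matrix-coefficient technique the paper itself uses for kindred computations (compare the proof of Lemma \ref{z^p}): expand each factor $N(Z-W_i)^{-1}$ via (\ref{1/N-expansion}) in the form appropriate to whether $W_i$ lies in $\BB D^+_R$ or $\BB D^-_R$, and reduce everything to the orthogonality relations (\ref{orthogonality}). Your central bookkeeping claim checks out: writing $t^{l_1}(Z^{-1})t^{l_2}(Z^{-1})N(Z)^{-2}\propto t^{l_1}(Z)t^{l_2}(Z)\cdot N(Z)^{-2l_1-2l_2-2}$ and decomposing the polynomial product into harmonics $V_l\boxtimes V_l$ times $N(Z)^{l_1+l_2-l}$ with $l_1+l_2-l\ge 0$ does force $k=l_1+l_2-l\ge 0$, which kills $\Zh^0\oplus\Zh^-_1$ in case (1) and, after the mirror computation, kills $\Zh^0\oplus\Zh^+$ in case (2). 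The one step you leave genuinely soft is $M\circ (I_R f)=f$ on $\Zh^+$: ``re-collapsing the Fischer/Clebsch--Gordan decomposition'' is really a dual-basis identity that you have not verified. But you do not need to verify it directly: you have already established that $I_R|_{\Zh^+}$ is $\mathfrak{gl}(2,\HC)$-equivariant with image in ${\cal H}^+\otimes{\cal H}^+$, so $M\circ I_R$ is an equivariant endomorphism of the irreducible $(\rho_1,\Zh^+)$, hence a scalar by Schur's lemma, and your normalization computation $I_R(1)=1\otimes 1$ pins that scalar to $1$; the same multiplicity-one argument you already invoke then gives $I_R|_{\Zh^+}=I$. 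With that substitution the proof is complete.
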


We finish this subsection with a lemma that will be used in
our computation of the map $\tilde L^{(2)}$ on
$(\rho_1,\Zh^+) \otimes (\pi_r^0, {\cal H}^+)$.

\begin{lem}  \label{z^p}
Let $p=1,2,3,\dots$, and let $z_{ij}=z_{11}$, $z_{12}$, $z_{21}$ or $z_{22}$. Then
$$
\bigl( I(z_{ij})^p \bigr)(W,W') =
\frac1{p+1} \sum_{k=0}^p (w_{ij})^k \cdot (w'_{ij})^{p-k}.
$$
\end{lem}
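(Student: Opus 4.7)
The plan is to leverage the $\mathfrak{gl}(2,\HC)$-equivariance of $I$ together with the normalization $I(1)=1\otimes 1$. For each pair $(i,j)$, I will produce a Lie algebra element $X\in\mathfrak{sl}(2,\HC)$ whose repeated $\rho_1$-action on the constant function $1$ produces an explicit scalar multiple of $z_{ij}^p$; equivariance of $I$ then reduces the computation of $I(z_{ij}^p)$ to an evaluation of $(\pi^0_l\otimes\pi^0_r)(X)^p(1\otimes 1)$, which can be expanded by the binomial theorem since the two tensor factors commute.

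Taking $(i,j)=(1,2)$ for concreteness, I would choose $X=\bigl(\begin{smallmatrix} 0 & 0 \\ C & 0 \end{smallmatrix}\bigr)$ with $C=\bigl(\begin{smallmatrix} 0 & 0 \\ 1 & 0 \end{smallmatrix}\bigr)$, so that $\tr(CZ)=z_{12}$. Since $X$ has vanishing $\mathfrak{gl}(4,\BB C)$-trace it lies in $\mathfrak{sl}(2,\HC)$, and hence $\pi^0_l(X)=\pi^0_r(X)$ on $\widetilde{\cal H}$. Plugging into Lemma~\ref{pi_m-algebra-action} with $m=1$ and using $\partial z_{12}^p = pz_{12}^{p-1}C$ together with $(ZCZ)_{12}=z_{12}^2$, one obtains the recursion $\rho_1(X)(z_{12}^p)=(p+2)z_{12}^{p+1}$, so
$$z_{12}^p = \tfrac{1}{(p+1)!}\rho_1(X)^p(1).$$
The cases $ij\in\{11,21,22\}$ are handled identically by choosing $C$ to be the appropriate matrix unit $E_{ji}$.

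Rather than re-derive infinitesimal formulas for $\pi^0_l$ and $\pi^0_r$, I would extract them from the $\mathfrak{gl}(2,\HC)$-equivariance of the multiplication map $M$ in (\ref{M}), which gives the Leibniz identity
$$\rho_1(X)(\phi_1\phi_2) = (\pi^0_l(X)\phi_1)\phi_2 + \phi_1\,\pi^0_r(X)\phi_2.$$
Setting $\phi_1=\phi_2=1$ and using $\pi^0_l(X)=\pi^0_r(X)$ yields $\pi^0_l(X)(1)=z_{12}$; setting $\phi_1=1$, $\phi_2=z_{12}^p$ and substituting the already-computed value of $\rho_1(X)(z_{12}^p)$ gives $\pi^0_r(X)(z_{12}^p)=(p+1)z_{12}^{p+1}$, so by induction $(\pi^0_l(X))^k(1)=k!\,w_{12}^k$ (and the same for $\pi^0_r$).

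Since $\pi^0_l(X)\otimes 1$ and $1\otimes\pi^0_r(X)$ commute, the equivariance of $I$ and the binomial theorem finally give
$$I(z_{12}^p) = \tfrac{1}{(p+1)!}\sum_{k=0}^p \binom{p}{k}(\pi^0_l(X))^k(1)\otimes(\pi^0_r(X))^{p-k}(1) = \tfrac{1}{(p+1)!}\sum_{k=0}^p \binom{p}{k}k!(p-k)!\,w_{12}^k(w'_{12})^{p-k},$$
which collapses to $\tfrac{1}{p+1}\sum_{k=0}^p w_{12}^k(w'_{12})^{p-k}$ via $\binom{p}{k}k!(p-k)!=p!$. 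The only real obstacle I anticipate is keeping the sign and normalization conventions consistent across $\rho_1$, $\pi^0_l$, and $\pi^0_r$; the Leibniz identity is precisely what sidesteps this, pinning down the latter two actions from $\rho_1$ (which is nailed by Lemma~\ref{pi_m-algebra-action}) without any recourse to external references.
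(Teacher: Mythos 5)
Your proposal is correct, and every step checks out against the paper's conventions: with the matrix unit $C=E_{ji}$ one indeed gets $\rho_1(X)(z_{ij}^p)=(p+2)\,z_{ij}^{p+1}$ from Lemma \ref{pi_m-algebra-action} with $m=1$; the element $X=\bigl(\begin{smallmatrix}0&0\\ C&0\end{smallmatrix}\bigr)$ is traceless, so $\pi^0_l(X)=\pi^0_r(X)$ as required; the Leibniz identity obtained by differentiating the equivariance of $M$ then pins down $(\pi^0_{l}(X))^k(1)=(\pi^0_{r}(X))^k(1)=k!\,w_{ij}^k$, and the collapse $\binom{p}{k}k!(p-k)!=p!$ gives the stated formula. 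However, your route is genuinely different from the paper's. The paper works with the concrete integral realization $I_R$ of (\ref{fork}): it expands the two Cauchy kernels $N(Z-W)^{-1}$ and $N(Z-W')^{-1}$ via the matrix coefficient expansion (\ref{1/N-expansion}), uses the identities $t^l_{-l\,\underline{-l}}(Z)=(z_{11})^{2l}$ and their analogues, and evaluates the resulting pairings by the orthogonality relations (\ref{orthogonality}) together with a monomial-counting argument showing that only the extreme weight vectors contribute; it then invokes Theorem \ref{embedding-thm} to identify $I_R$ with $I$ on $\Zh^+$. Your argument is purely algebraic: it characterizes $I$ by equivariance and the normalization $I(1)=1\otimes 1$ and never touches the integral, which buys economy (no expansions, no orthogonality relations). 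What it loses is the byproduct the paper extracts ``for future use'', namely the Corollary immediately following the lemma, which records the values of the pairings $\bigl\langle N(Z)^{-2-k}\, t^l_{m\,\underline{n}}(Z^{-1})\, t^{l'}_{m'\,\underline{n'}}(Z^{-1}),\, t^{p/2}_{-p/2\,\underline{-p/2}}(Z)\bigr\rangle$ as a consequence of the paper's computation; under your approach that Corollary would need its own proof. One small point to make explicit if you write this up: the Leibniz identity is applied to the pure tensor $1\otimes z_{ij}^p$ with both factors harmonic (which $z_{ij}^p$ is, since $\square z_{ij}^p=0$), so everything stays inside the domain on which $M$ is defined and equivariant.
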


\begin{proof}
By direct calculation,
\begin{center}
\begin{tabular}{rcr}
$t^l_{-l\,\underline{-l}}(Z) = (z_{11})^{2l}$, & \qquad &
$t^l_{-l\,\underline{l}}(Z) = (z_{12})^{2l}$, \\
$t^l_{l\,\underline{-l}}(Z) = (z_{21})^{2l}$, & \qquad &
$t^l_{l\,\underline{l}}(Z) = (z_{22})^{2l}$.
\end{tabular}
\end{center}
We give the proof for the case $z_{ij}=z_{11}$, the other cases are similar.
Applying the matrix coefficient expansion (\ref{1/N-expansion}), we obtain:
\begin{multline*}
\bigl( I_R (z_{11})^p \bigr)(W,W')
= \frac i{2\pi^3} \int_{Z \in U(2)_R} \frac{(z_{11})^p \,dV}{N(Z-W) \cdot N(Z-W')} \\
= \biggl\langle \frac1{N(Z-W) \cdot N(Z-W')},
t^{p/2}_{-p/2\,\underline{-p/2}}(Z) \biggr\rangle_Z \\
= \sum_{l,m,n,l',m',n'} t^l_{n \, \underline{m}}(W) \cdot t^{l'}_{n' \, \underline{m'}}(W')
\cdot \bigl\langle N(Z)^{-2} \cdot t^l_{m \, \underline{n}}(Z^{-1}) \cdot
t^{l'}_{m' \, \underline{n'}}(Z^{-1}), t^{p/2}_{-p/2\,\underline{-p/2}}(Z) \bigr\rangle_Z.
\end{multline*}
By the orthogonality relations (\ref{orthogonality}),
$$
\bigl\langle N(Z)^{-2} \cdot t^l_{m \, \underline{n}}(Z^{-1}) \cdot
t^{l'}_{m' \, \underline{n'}}(Z^{-1}), t^{p/2}_{-p/2\,\underline{-p/2}}(Z) \bigr\rangle
=0 \qquad \text{if $l+l' \ne p/2$}
$$
and
$$
\bigl\langle N(Z)^{-2} \cdot t^l_{-l \, \underline{-l}}(Z^{-1}) \cdot
t^{l'}_{-l' \, \underline{-l'}}(Z^{-1}), t^{p/2}_{-p/2\,\underline{-p/2}}(Z) \bigr\rangle
= \frac1{p+1} \qquad \text{if $l+l' = p/2$}.
$$
Finally, we need to show that if $l+l'= p/2$ and
$(m,n,m',n') \ne (-l,-l,-l',-l')$, then
\begin{equation}  \label{zero}
\bigl\langle N(Z)^{-2} \cdot t^l_{m \, \underline{n}}(Z^{-1}) \cdot
t^{l'}_{m' \, \underline{n'}}(Z^{-1}), t^{p/2}_{-p/2\,\underline{-p/2}}(Z) \bigr\rangle =0.
\end{equation}
Indeed, by (\ref{t}), each $t^a_{b \, \underline{c}}(Z)$ is a linear combination of
monomials
$$
(z_{11})^{\alpha_{11}} (z_{12})^{\alpha_{12}} (z_{21})^{\alpha_{21}}
(z_{22})^{2a-\alpha_{11}-\alpha_{12}-\alpha_{21}},
$$
and if $(b,c) \ne (-a,-a)$, then
$t^a_{b \, \underline{c}}(Z)$ does not contain the monomial $(z_{11})^{2a}$.
Hence the product $t^l_{m \, \underline{n}}(Z) \cdot t^{l'}_{m' \, \underline{n'}}(Z)$
does not contain the monomial $(z_{11})^p$, and the expansion of
$t^l_{m \, \underline{n}}(Z) \cdot t^{l'}_{m' \, \underline{n'}}(Z)$
into basis functions (\ref{Zh-basis}) does not contain the term 
$t^{p/2}_{-p/2\,\underline{-p/2}}(Z)$. Thus (\ref{zero}) follows.
Therefore,
$$
\bigl( I_R (z_{ij})^p \bigr)(W,W') = \frac1{p+1} \sum_{k=0}^p
t^{\frac{k}2}_{-\frac{k}2\,\underline{-\frac{k}2}}(W) \cdot
t^{\frac{p-k}2}_{-\frac{p-k}2\,\underline{-\frac{p-k}2}}(W').
$$
\end{proof}

For future use, we state the following consequence of this proof:

\begin{cor}
Let $k \ge 0$. We have the following orthogonality relations:
$$
\bigl\langle N(Z)^{-2-k} \cdot t^l_{m \, \underline{n}}(Z^{-1}) \cdot
t^{l'}_{m' \, \underline{n'}}(Z^{-1}), t^{p/2}_{-p/2\,\underline{-p/2}}(Z) \bigr\rangle =
\begin{cases}
\frac1{p+1} & \text{if $k=0$, $l+l'=p/2$,} \\
& \text{$m=n=-l$ and $m'=n'=-l'$}; \\
0 & \text{otherwise}.
\end{cases}
$$
\end{cor}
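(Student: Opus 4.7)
The corollary extends the orthogonality computation implicit in the proof of Lemma~\ref{z^p}, which already establishes the case $k=0$ together with the numerical value $\tfrac{1}{p+1}$; the new content is the vanishing of the pairing for $k \ge 1$. My plan is to expand the left-hand argument in the basis \eqref{Zh-basis} of $\Zh$ and then apply the orthogonality relations \eqref{orthogonality} to extract the coefficient that is paired with $t^{p/2}_{-p/2\,\underline{-p/2}}(Z)$.

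First, using the proportionality $t^a_{b\,\underline{c}}(Z^{-1}) \propto t^a_{-c\,\underline{-b}}(Z)\cdot N(Z)^{-2a}$, I would rewrite the left-hand argument as a constant multiple of
\[
t^l_{-n\,\underline{-m}}(Z) \cdot t^{l'}_{-n'\,\underline{-m'}}(Z) \cdot N(Z)^{-2-k-2(l+l')}.
\]
The polynomial factor has homogeneity degree $2(l+l')$, so its basis expansion $\sum c_{L,M,N,K}\, t^L_{N\,\underline{M}}(Z)\cdot N(Z)^{K}$ is constrained by $L, K \ge 0$ and $L+K = l+l'$. By \eqref{orthogonality}, the pairing with $t^{p/2}_{-p/2\,\underline{-p/2}}(Z)$ selects only the component proportional to $t^{p/2}_{-p/2\,\underline{-p/2}}(Z^{-1})\cdot N(Z)^{-2}$; converting this back to $Z$-form gives $t^{p/2}_{p/2\,\underline{p/2}}(Z) \cdot N(Z)^{-p-2}$. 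Matching exponents forces $L=p/2$, $M=N=p/2$, and $K = k+2(l+l')-p$; combined with $L+K=l+l'$ this yields $k+l+l' = p/2$, and substituting back gives $K=-k$. Since $K \ge 0$, this forces $k=0$, proving the vanishing assertion for $k \ge 1$.

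In the surviving case $k = 0$, $l+l' = p/2$, the remaining task is to extract the coefficient of $t^{p/2}_{p/2\,\underline{p/2}}(Z) = (z_{22})^p$ in the product $t^l_{-n\,\underline{-m}}(Z)\cdot t^{l'}_{-n'\,\underline{-m'}}(Z)$. The monomial argument at the end of the proof of Lemma~\ref{z^p}, applied to $z_{22}$ in place of $z_{11}$ (each $t^a_{b\,\underline{c}}(Z)$ contains $(z_{22})^{2a}$ only when $(b,c) = (a,a)$, since $t^a_{a\,\underline{a}}(Z) = (z_{22})^{2a}$), shows the coefficient is nonzero precisely when $m=n=-l$ and $m'=n'=-l'$, in which case the product equals $(z_{22})^p$ exactly, with coefficient $1$. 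The constant $\tfrac{1}{p+1}$ then follows from the $k=0$ computation already carried out in Lemma~\ref{z^p}, so no new bookkeeping of the $t(Z)$ versus $t(Z^{-1})$ proportionality constants is needed. The only obstacle is keeping these proportionalities consistent across the calculation, but this is entirely parallel to the work in Lemma~\ref{z^p} and is subsumed by invoking its result.
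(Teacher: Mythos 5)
Your proposal is correct and follows essentially the paper's own route: the paper states this corollary as a direct consequence of the proof of Lemma~\ref{z^p}, and your argument reproduces that proof's orthogonality-relation and monomial-coefficient computation for the $k=0$ case. The only genuinely new ingredient you supply is the homogeneity count forcing $K=-k\ge 0$ (hence $k=0$) for the vanishing when $k\ge 1$, which is the natural completion implicit in the paper's "consequence of this proof" and is carried out correctly.
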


\subsection{Some Irreducible Components of
$(\rho_1,\Zh^+) \otimes (\pi_r^0, {\cal H}^+)$}  \label{irred-comp-subsection}

In this subsection we describe some irreducible components of
$(\rho_1,\Zh^+) \otimes (\pi_r^0, {\cal H}^+)$.
Decompositions of tensor products of similar representations of $SU(n,n)$
(instead of just $SU(2,2)$) were studied, for example, in \cite{J1,J2,JV}.
But we could not find the decomposition of this particular tensor product
in the literature.

We denote by $\BB C^{n \times n}$ the space of complex $n \times n$ matrices.
Then $\widetilde{\Zh} \otimes \BB C^{n \times n}$ is the space of holomorphic
functions on $\HC$ (possibly with singularities) with values in
$\BB C^{n \times n}$.
We let parameters $m,n=1,2,3,\dots$ and consider the following actions of
$GL(2,\HC)$ on $\widetilde{\Zh} \otimes \BB C^{n \times n}$:
\begin{equation}  \label{pi_mn-action}
\varpi_m^n(h): \: F(Z) \quad \mapsto \quad \bigl( \varpi_m^n(h)F \bigr)(Z) =
\frac {\tau_{\frac{n-1}2}(cZ+d)^{-1}}{N(cZ+d)^m} \cdot
F \bigl( (aZ+b)(cZ+d)^{-1} \bigr) \cdot
\frac {\tau_{\frac{n-1}2}(a'-Zc')^{-1}}{N(a'-Zc')},
\end{equation}
where
$h = \bigl(\begin{smallmatrix} a' & b' \\ c' & d' \end{smallmatrix}\bigr)
\in GL(2,\HC)$,
$h^{-1} = \bigl(\begin{smallmatrix} a & b \\ c & d \end{smallmatrix}\bigr)$,
expressions $cZ+d$ and $a'-Zc'$ are regarded as elements of $\HC^{\times}$
and $\tau_l: \HC^{\times} \to\operatorname{Aut}(\BB C^{2l+1}) \subset
\BB C^{(2l+1) \times (2l+1)}$
is the irreducible $(2l+1)$-dimensional representation of $\HC^{\times}$
described in Subsection \ref{matrix-coeff-subsection},
$l=0,\frac12,1,\frac32, \dots$.

For $n=1$, $\tau_0 \equiv 1$ and $\varpi_m^1 \equiv \varpi_m$.
On the other hand, if $m=1$, then $\varpi_1^n \equiv \rho_n$,
where the action $\rho_n$ is described by equation (60) in \cite{FL1}.
Differentiating the $\varpi_m^n$-action, we obtain an action of
$\mathfrak{gl}(2,\HC)$ which preserves $\Zh \otimes \BB C^{n \times n}$
and $\Zh^+ \otimes \BB C^{n \times n}$.
As a special case of Proposition 4.7 in \cite{JV}
(see also the discussion preceding the proposition and references therein),
we have:

\begin{thm} \label{JV-thm}
The representations $(\varpi_m^n, \Zh^+ \otimes \BB C^{n \times n})$,
$m,n=1,2,3,\dots$, of $\mathfrak{sl}(2,\HC)$ are irreducible.
They possess inner products which make them unitary
representations of the real form $\mathfrak{su}(2,2)$
of $\mathfrak{sl}(2,\HC)$.
\end{thm}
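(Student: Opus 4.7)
My plan is to prove irreducibility by a $K$-type argument analogous to the one used in Theorem \ref{Zh2-decomposition}, and to prove unitarity by exhibiting an explicit weighted Bergman inner product on $\Zh^+\otimes\BB C^{n\times n}$; together these identify $(\varpi_m^n,\Zh^+\otimes\BB C^{n\times n})$ as a unitarizable lowest-weight module for $\mathfrak{su}(2,2)$, which is the special case of Proposition 4.7 of \cite{JV} we need. Throughout I take $K=U(2)\times U(2)$ as in (\ref{U(2)xU(2)}) and work in the basis of $\Zh^+\otimes\BB C^{n\times n}$ obtained by tensoring (\ref{Zh-basis}) (restricted to $k\ge 0$) with a basis of $V_{\frac{n-1}2}\boxtimes V_{\frac{n-1}2}$ coming from $\BB C^{n\times n}$.

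For irreducibility, I would first differentiate (\ref{pi_mn-action}) to obtain the Lie-algebra version of $\varpi_m^n$; this is the analogue of Lemma \ref{pi_m-algebra-action} with extra matrix-valued terms coming from differentiating $\tau_{\frac{n-1}2}(cZ+d)^{-1}$ and $\tau_{\frac{n-1}2}(a'-Zc')^{-1}$. The constants $1\otimes A$, $A\in\BB C^{n\times n}$, form the lowest $K$-type, because they are annihilated by the noncompact lowering operators $\varpi_m^n\bigl(\begin{smallmatrix} 0 & B \\ 0 & 0 \end{smallmatrix}\bigr)$. I would then carry out the matrix-valued analogue of (\ref{B-action})--(\ref{C-action}) to compute the transition coefficients of the noncompact raising operators $\varpi_m^n\bigl(\begin{smallmatrix} 0 & 0 \\ C & 0 \end{smallmatrix}\bigr)$ between adjacent $K$-types; for every $m,n\ge 1$ and every $k\ge 0$ these coefficients are nonzero in enough entries to allow one to reach every $K$-isotypic component by iterated application starting from the lowest $K$-type. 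A standard lowest-weight-module argument then forces any nonzero $\mathfrak{sl}(2,\HC)$-invariant subspace to contain the lowest $K$-type and hence to coincide with all of $\Zh^+\otimes\BB C^{n\times n}$.

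For unitarity I would introduce the weighted Bergman form
$$
\langle F_1,F_2\rangle_{m,n}=c_{m,n}\int_{\BB D^+}\tr\bigl(F_1(Z)\cdot\tau_{\frac{n-1}2}(1-Z^*Z)\cdot F_2(Z)^*\cdot\tau_{\frac{n-1}2}(1-ZZ^*)\bigr)\,N(1-ZZ^*)^{m-2}\,dV,
$$
with $c_{m,n}>0$ chosen so that $\langle 1\otimes I_n,1\otimes I_n\rangle_{m,n}=1$. $U(2,2)$-invariance would follow from Lemma \ref{Jacobian_lemma} together with the transformation identity $1-\tilde Z^*\tilde Z=(cZ+d)^{-*}(1-Z^*Z)(cZ+d)^{-1}$ (and its $ZZ^*$ companion) for $h\in U(2,2)$. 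Positive-definiteness reduces, by $K$-equivariance and the orthogonality of distinct $K$-types, to computing the norm of each $K$-type via polar coordinates on $\BB D^+$.

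The main obstacle is positive-definiteness when $m=1$: the weight $N(1-ZZ^*)^{m-2}=N(1-ZZ^*)^{-1}$ is then non-integrable near the Shilov boundary $U(2)$, so $\langle\cdot,\cdot\rangle_{m,n}$ must be defined by analytic continuation in $m$ (or equivalently by writing down the Shapovalov form on the lowest $K$-type and transporting it by the $\mathfrak{su}(2,2)$-action). Verifying that this continuation remains positive-definite for every $m,n\ge 1$ is precisely the Wallach-set analysis carried out in \cite{JV}, whose conclusion we invoke rather than reprove.
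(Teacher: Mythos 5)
The paper offers no proof of Theorem \ref{JV-thm}: it is quoted verbatim as a special case of Proposition 4.7 of \cite{JV}, so there is no internal argument to compare yours against. Your outline is the standard strategy for unitarizable lowest-weight modules of $\mathfrak{su}(2,2)$ and is consistent with how the cited result is actually established, but as a proof it has two genuine gaps, one on each half of the statement.

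For irreducibility, everything rests on the assertion that the matrix-valued analogues of the transition coefficients in (\ref{B-action})--(\ref{C-action}) are ``nonzero in enough entries'' for all $m,n\ge 1$ and all $k\ge 0$. That assertion \emph{is} the theorem and cannot be waved through: the paper's own Theorem \ref{Zh2-decomposition} shows that for the closely related representation $(\varpi_2,\Zh)$ the analogous coefficients do vanish at specific values of $k$ and $l$, and it is exactly those vanishings that produce the nontrivial lattice of submodules $\Zh^+$, $\Zh_2^-$, $I_2^\pm$, $J_2$. For $n\ge 2$ the computation is genuinely harder than the scalar one, because differentiating $\tau_{\frac{n-1}2}(cZ+d)^{-1}$ and $\tau_{\frac{n-1}2}(a'-Zc')^{-1}$ produces terms that mix the $\BB C^{n\times n}$-factor with the $K$-types of $\Zh^+$; without carrying it out you cannot exclude a vanishing coefficient creating an invariant subspace. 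For unitarity, you ultimately defer positive-definiteness of the analytically continued form to the Wallach-set analysis of \cite{JV} -- which is the very proposition the paper cites for the whole theorem -- so the dependence on \cite{JV} is relocated, not removed. A smaller but concrete error: on $\mathfrak{su}(2,2)\subset\mathfrak{sl}(2,\HC)$ the multiplier $N(cZ+d)^{-m}\,N(a'-Zc')^{-1}$ is equivalent to $N(cZ+d)^{-(m+1)}$, and invariance of a density $N(1-ZZ^*)^{s}$ times Lebesgue measure on the type $I_{2,2}$ domain $\BB D^+$ (using Lemma \ref{Jacobian_lemma} and $1-\tilde Z\tilde Z^*=(a'-Zc')^{-1}(1-ZZ^*)(a'-Zc')^{-*}$) forces $s=m-3$ in the scalar case $n=1$, not $m-2$. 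Consequently both $m=1$ and $m=2$ lie outside the range of convergence near the Shilov boundary, so your reduction of the difficulty to the single case $m=1$ is too optimistic.
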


According to \cite{JV}, we have the following decomposition of a tensor
product $(\pi^0_l, {\cal H}^+) \otimes (\pi^0_r, {\cal H}^+)$ into irreducible
subrepresentations of $\mathfrak{gl}(2,\HC)$:
\begin{equation}  \label{tensor-decomp}
(\pi^0_l, {\cal H}^+) \otimes (\pi^0_r, {\cal H}^+) \simeq
\bigoplus_{n=1}^{\infty} (\varpi_1^n,\Zh^+\otimes \BB C^{n \times n})
= \bigoplus_{n=1}^{\infty} (\rho_n,\Zh^+\otimes \BB C^{n \times n})
\end{equation}
(see also Subsection 5.1 in \cite{FL1}).
We outline the proof of this statement.
First of all, by Lemma \ref{Z-W}, the tensor product
$(\pi^0_l, {\cal H}^+) \otimes (\pi^0_r, {\cal H}^+)$ contains each
$(\varpi_1^n,\Zh^+\otimes \BB C^{n \times n})$ with 
$$
(\varpi_1^1,\Zh^+) \quad \text{generated by} \quad 1 \otimes 1
$$
and
$$
(\varpi_1^n,\Zh^+\otimes \BB C^{n \times n}) \quad \text{generated by} \quad
(z_{ij}-z'_{ij})^{n-1} , \qquad n \ge 2.
$$
Then one checks that the direct sum
$\bigoplus_{n=1}^{\infty} (\varpi_1^n,\Zh^+\otimes \BB C^{n \times n})$
exhausts all of $(\pi^0_l, {\cal H}^+) \otimes (\pi^0_r, {\cal H}^+)$
by comparing the two sides as representations of $U(2) \times U(2)$ or
$\mathfrak{u}(2) \times \mathfrak{u}(2)$.

Similarly, define subrepresentations $(\rho_1 \otimes \pi_r^0,\mathfrak{V}_n)$
of $(\rho_1,\Zh^+) \otimes (\pi_r^0, {\cal H}^+)$ as
$$
\mathfrak{V}_n =
\text{ the smallest $\mathfrak{gl}(2,\HC)$-invariant subspace containing }
\begin{cases}
1 \otimes 1 & \text{if $n=1$}; \\
(z_{11}-z'_{11})^{n-1} & \text{if $n \ge 2$}.
\end{cases}
$$
Then each $\mathfrak{V}_n$ can be $\mathfrak{gl}(2,\HC)$-equivariantly
mapped onto $(\varpi_2^n,\Zh^+\otimes \BB C^{n \times n})$, $n \ge 1$.
(It is possible that some $\mathfrak{V}_n$'s are actually isomorphic to
$(\varpi_2^n,\Zh^+\otimes \BB C^{n \times n})$.)
Thus $(\rho_1,\Zh^+) \otimes (\pi_r^0, {\cal H}^+)$ contains each
$(\varpi_2^n,\Zh^+\otimes \BB C^{n \times n})$, $n \ge 1$, among its irreducible
components.
Note that $\Zh^+ \otimes {\cal H}^+$ must contain more irreducible
components in addition to those, which can be seen by, for example, comparing
the two sides as representations of $U(2) \times U(2)$ or
$\mathfrak{u}(2) \times \mathfrak{u}(2)$.

We introduce another subrepresentation
$(\Zh^+ \otimes {\cal H}^+)_1 \subset \Zh^+ \otimes {\cal H}^+$ as
$$
(\Zh^+ \otimes {\cal H}^+)_1 =
\mathfrak{V}_1 + \mathfrak{V}_2 + \dots + \mathfrak{V}_n + \dots.
$$



\subsection{The Effect of $\tilde L^{(2)}$ on
$(\Zh^+ \otimes {\cal H}^+)_1 \subset \Zh^+ \otimes {\cal H}^+$}

In this subsection we compute the effect of $\tilde L^{(2)}$ on
$(\Zh^+ \otimes {\cal H}^+)_1 \subset \Zh^+ \otimes {\cal H}^+$.
These calculations will be used to compute the map
$L^{(2)}$ on $(\pi^0_l, {\cal H}^+) \otimes (\pi^0_r, {\cal H}^+)$.

\begin{figure}
\begin{center}
\begin{subfigure}{0.3\textwidth}
\centering
\setlength{\unitlength}{1mm}
\begin{picture}(20,38)
\put(2,4){\line(0,1){30}}
\put(17,4){\line(0,1){30}}
\put(2,4){\line(3,2){15}}
\put(2,24){\line(3,2){15}}
\put(2,24){\line(3,-2){15}}

\put(2,24){\circle*{2}}
\put(17,14){\circle*{2}}

\put(0,35){$Z_2$}
\put(16,35){$Z_1$}
\put(16,0){$W_2$}
\put(0,0){$W_1$}
\put(20,13){$T_2$}
\put(-4,23){$T_1$}
\end{picture}
\end{subfigure}
$\rightsquigarrow$
\begin{subfigure}{0.3\textwidth}
\centering
\setlength{\unitlength}{1mm}
\begin{picture}(20,48)
\multiput(2,4)(0,15){3}{\line(0,1){10}}
\multiput(17,4)(0,15){3}{\line(0,1){10}}
\put(2,4){\line(3,2){15}}
\put(2,34){\line(3,2){15}}
\put(2,29){\line(3,-2){15}}

\multiput(2,15)(0,2){2}{\line(0,1){1}}
\multiput(17,15)(0,2){2}{\line(0,1){1}}
\multiput(2,30)(0,2){2}{\line(0,1){1}}
\multiput(17,30)(0,2){2}{\line(0,1){1}}

\put(0,45){$Z_2$}
\put(16,45){$Z_1$}
\put(16,0){$W_2$}
\put(0,0){$W_1$}
\end{picture}
\end{subfigure}
\end{center}
\caption{Decomposition of the diagram for $\tilde l^{(2)}$ into
three zig-zag diagrams.}
\label{zig-zag-decomposition}
\end{figure}
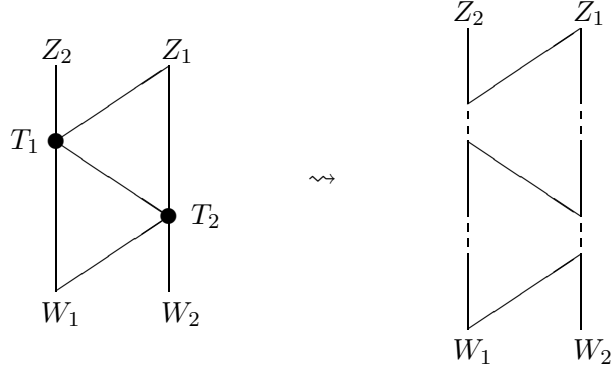

\begin{figure}
\begin{center}
\setlength{\unitlength}{1mm}
\begin{picture}(20,18)
\put(2,4){\line(0,1){10}}
\put(17,4){\line(0,1){10}}
\put(2,4){\line(3,2){15}}

\put(0,15){$Z'$}
\put(16,15){$Z$}
\put(16,0){$T'$}
\put(0,0){$T$}
\end{picture}
\end{center}
\caption{Zig-zag diagram.}
\label{zig-zag}
\end{figure}
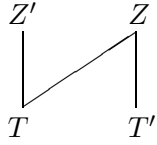

Although $\tilde l^{(2)}$ is not a ladder integral, we can think of it
as represented by the diagram on the left side of
Figure \ref{zig-zag-decomposition}.
(It is the two-loop ladder diagram with dashed line deleted.)
As shown in Figure \ref{zig-zag-decomposition}, we can break the diagram
into three ``zig-zags''.
To each zig-zag diagram as in Figure \ref{zig-zag},
we associate a function
$$
\lambda(Z,Z';T,T') = \frac1{N(Z'-T) \cdot N(T-Z) \cdot N(Z-T')}.
$$
From Lemma \ref{Z-W} we immediately obtain the following
conformal property of this function:

\begin{lem}
If $h = \bigl(\begin{smallmatrix} a' & b' \\ c' & d' \end{smallmatrix}\bigr)
\in GL(2,\HC)$,
$h^{-1} = \bigl(\begin{smallmatrix} a & b \\ c & d \end{smallmatrix}\bigr)$,
denote by $\tilde Z = (aZ+b)(cZ+d)^{-1}$ and define
$\tilde Z'$, $\tilde T$, $\tilde T'$ similarly. Then
\begin{multline*}
\lambda(\tilde Z, \tilde Z'; \tilde T, \tilde T')
= N(cZ+d) \cdot N(a'-Zc') \cdot N(cZ'+d) \\
\cdot N(cT+d) \cdot N(a'-Tc') \cdot N(a'-T'c') \cdot \lambda(Z,Z';T,T').
\end{multline*}
\end{lem}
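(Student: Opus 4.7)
The plan is to apply Lemma \ref{Z-W} three times, once to each of the three quadratic factors in the denominator of $\lambda$, and then collect the resulting conformal factors. Since $\lambda(Z,Z';T,T')$ is built from $N(Z'-T)$, $N(T-Z)$, and $N(Z-T')$, this reduces the lemma to a purely bookkeeping exercise.

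First, I would observe that for any $U,V \in \HC$, Lemma \ref{Z-W} gives
\[
N(\tilde U - \tilde V) = \frac{N(U-V)}{N(a'-Vc') \cdot N(cU+d)},
\]
obtained by taking the quaternionic norm of the first identity in Lemma \ref{Z-W} (the norm is multiplicative on $\HC$). Next I would apply this three times with the appropriate choices of $(U,V)$: take $(U,V) = (Z',T)$ to handle $N(\tilde Z' - \tilde T)$, producing the factor $N(a'-Tc') \cdot N(cZ'+d)$ in the denominator; take $(U,V) = (T,Z)$ to handle $N(\tilde T - \tilde Z)$, producing $N(a'-Zc') \cdot N(cT+d)$; and take $(U,V) = (Z,T')$ to handle $N(\tilde Z - \tilde T')$, producing $N(a'-T'c') \cdot N(cZ+d)$.

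Multiplying the three resulting identities together, all six factors migrate into the numerator of $\lambda(\tilde Z,\tilde Z';\tilde T,\tilde T')$, yielding exactly
\[
N(cZ+d) \cdot N(a'-Zc') \cdot N(cZ'+d) \cdot N(cT+d) \cdot N(a'-Tc') \cdot N(a'-T'c') \cdot \lambda(Z,Z';T,T'),
\]
which is the claimed formula. There is no real obstacle here; the only thing to be careful about is consistently choosing which of the two equivalent expressions from Lemma \ref{Z-W} is applied to each factor so that the asymmetric roles of the variables in $\lambda$ (each of $Z$, $Z'$, $T$, $T'$ appears a different number of times) are matched correctly with the asymmetric conformal weights on the right-hand side.
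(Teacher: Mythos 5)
Your proposal is correct and follows exactly the route the paper intends: the paper states this lemma as an immediate consequence of Lemma \ref{Z-W}, and your three applications of that lemma (after taking norms, using multiplicativity of $N$) with the stated choices of $(U,V)$ produce precisely the six conformal factors in the claimed identity.
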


Corresponding to this function $\lambda$, we have an integral operator
$\Lambda$ on $(\rho_1,\Zh) \otimes (\pi_r^0, {\cal H}^+)$ defined by
$$
\Lambda (f \otimes \phi)(T,T') \\
= \frac{i}{16\pi^5} \iint_{\genfrac{}{}{0pt}{}{Z \in U(2)_{R}}{Z' \in S^3_{R'}}}
\lambda(Z,Z';T,T') \cdot f(Z) \cdot (\degt_{Z'} \phi)(Z')
\,dV_Z\,\frac{dS_{Z'}}{R'},
$$
where $f \in \Zh$, $\phi \in {\cal H}^+$, $T, T' \in \BB D^+_r$,
$R, R'>0$ and $r=\min\{R,R'\}$.
Since the bilinear pairings (\ref{H-pairing}) and (\ref{pairing}) are
$\g{gl}(2,\HC)$-equivariant, so is
$$
\Lambda: (\rho_1,\Zh) \otimes (\pi_r^0, {\cal H}^+)
\to (\rho_1,\Zh) \otimes (\pi_r^0, {\cal H}^+).
$$
Then the map $\tilde L^{(2)} : (\rho_1,\Zh) \otimes (\pi_r^0, {\cal H}^+)
\to (\rho_1,\Zh) \otimes (\pi_r^0, {\cal H}^+)$ is a composition of three
copies of $\Lambda$:
\begin{equation}  \label{Lambda-cubed}
\tilde L^{(2)} = \Lambda \circ \Lambda \circ \Lambda.
\end{equation}

\begin{prop}  \label{Lambda-prop}
The operator $\Lambda$ annihilates $(\Zh^-_1 \oplus \Zh^0) \otimes {\cal H}^+$,
and its image lies in $\Zh^+ \otimes {\cal H}^+$.
If $x \in (\Zh^+ \otimes {\cal H}^+)_1$ belongs to
$\mathfrak{V}_n$ -- the subrepresentation of $\Zh^+ \otimes {\cal H}^+$
generated by $(z_{11}-z'_{11})^{n-1}$ -- then
$$
\Lambda(x) = \lambda_n x, \qquad \text{where} \qquad
\lambda_n =
\begin{cases}
1 & \text{if $n=1$;} \\
\frac{(-1)^{n+1}}n & \text{if $n \ge 2$.}
\end{cases}
$$
\end{prop}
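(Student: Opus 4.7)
The strategy is to decompose the zig-zag kernel into its natural factors and identify each with an object from the preliminaries. The key observation is that
\[
\lambda(Z,Z';T,T') \;=\; \frac{1}{N(Z'-T)}\cdot\frac{1}{N(T-Z)\,N(Z-T')},
\]
so by Fubini the double integral defining $\Lambda$ separates into a product of two single integrals. I would evaluate the $Z'$-integral via the Poisson formula (Theorem \ref{Poisson}), obtaining $2\pi^2\,\phi(T)$, and the $Z$-integral via the definition (\ref{fork}) of $I_R$, obtaining $\frac{2\pi^3}{i}(I_R f)(T,T')$. The constants in the definition of $\Lambda$ collapse, yielding the clean identity
\[
\Lambda(f\otimes \phi)(T,T') \;=\; \phi(T)\cdot (I_R f)(T,T').
\]

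From this formula the first two assertions are immediate. By Theorem \ref{embedding-thm}(1), $I_R$ annihilates $\Zh^-_1\oplus \Zh^0$ when $T,T'\in \BB D^+_R$, so $\Lambda$ annihilates $(\Zh^-_1\oplus \Zh^0)\otimes {\cal H}^+$; and for any $f\in \Zh^+$ one has $I_R f\in {\cal H}^+\otimes {\cal H}^+$, so multiplying each pure-tensor summand $\phi_i(T)\psi_i(T')$ by the harmonic polynomial $\phi(T)$ produces an element of $\Zh^+\otimes {\cal H}^+$, since a product of polynomials is a polynomial.

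For the eigenvalue claim, $\mathfrak{gl}(2,\HC)$-equivariance of $\Lambda$ (inherited from invariance of the pairings used in its definition) together with the definition of $\mathfrak{V}_n$ as the smallest invariant subspace containing the generator $v_n$ reduces the claim to verifying $\Lambda(v_n)=\lambda_n v_n$ on that single vector: once this is checked, equivariance yields $\Lambda(X_1\cdots X_k v_n)=\lambda_n X_1\cdots X_k v_n$ for all Lie-algebra elements $X_j$, and such vectors span $\mathfrak{V}_n$. For $n=1$ the computation is immediate: $\Lambda(1\otimes 1)=1\cdot (I_R 1)=1\otimes 1$, using $I_R(1)=1$ from Theorem \ref{embedding-thm}. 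For $n\ge 2$ I would expand $v_n=(z_{11}-z'_{11})^{n-1}$ by the binomial theorem, apply the formula above to each pure tensor $z_{11}^k\otimes (z'_{11})^{n-1-k}$, and compute $(I_R z_{11}^k)(T,T')$ via Lemma \ref{z^p}. Matching the coefficient of $t_{11}^a(t'_{11})^{n-1-a}$ on each side reduces everything to the binomial identity
\[
\sum_{l=0}^{a}\binom{n-1}{l}\frac{(-1)^l}{n-l} \;=\; \frac{(-1)^a\binom{n-1}{a}}{n},\qquad 0\le a\le n-1.
\]

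The only non-routine step is this combinatorial identity, which is where I expect the real work to sit. I would prove it by induction on $a$: the inductive step boils down to Pascal's rule $\binom{n-1}{a-1}+\binom{n-1}{a}=\binom{n}{a}$ together with the elementary relation $\binom{n-1}{a}/(n-a)=\binom{n}{a}/n$. Alternatively, one can recognize $\sum_{l=0}^{n-1}\binom{n-1}{l}(-1)^l/(n-l)=\int_0^1(x-1)^{n-1}\,dx=(-1)^{n+1}/n$ and extend to partial sums by a telescoping argument. Either way the identity forces $\lambda_n=(-1)^{n+1}/n$ for all $n\ge 1$, which matches the piecewise expression in the statement.
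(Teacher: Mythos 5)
Your proposal is correct and follows essentially the same route as the paper: factoring the zig-zag kernel so that the $Z'$-integral is the Poisson formula and the $Z$-integral is $I_R$, deducing the annihilation and image statements from Theorem \ref{embedding-thm}, and then computing $\Lambda$ on the generators via Lemma \ref{z^p} plus a binomial identity (yours, after the substitution $\binom{n-1}{l}/(n-l)=\binom{n}{l}/n$, is exactly the identity $\sum_{k=0}^{r}(-1)^k\binom{n+1}{k}=(-1)^r\binom{n}{r}$ used in the paper) and extending to all of $\mathfrak{V}_n$ by equivariance.
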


\begin{proof}
By Theorem \ref{Poisson} and Theorem \ref{embedding-thm}, the operator
$\Lambda$ is a composition of the canonical isomorphism switching the
components
$$
(\rho_1,\Zh) \otimes (\pi_r^0, {\cal H}^+) \simeq
(\pi_r^0, {\cal H}^+) \otimes (\rho_1,\Zh),
\qquad f \otimes \phi \mapsto \phi \otimes f,
$$
followed by the projection
$$
Id_{{\cal H}^+} \otimes Proj : (\pi_r^0, {\cal H}^+) \otimes (\rho_1,\Zh)
\twoheadrightarrow (\pi_r^0, {\cal H}^+) \otimes (\rho_1,\Zh^+),
$$
where $Proj: \Zh = \Zh^-_1 \oplus \Zh^0 \oplus \Zh^+ \twoheadrightarrow \Zh^+$
is the projection, followed by the inclusion
$$
I \otimes Id_{{\cal H}^+} : (\pi_r^0, {\cal H}^+) \otimes (\rho_1,\Zh^+)
\hookrightarrow (\pi_r^0, {\cal H}^+) \otimes (\pi_l^0, {\cal H}^+)
\otimes (\pi_r^0, {\cal H}^+)
$$
and followed by the multiplication map
$$
M \otimes Id_{{\cal H}^+} : (\pi_r^0, {\cal H}^+) \otimes (\pi_l^0, {\cal H}^+)
\otimes (\pi_r^0, {\cal H}^+) \to (\rho_1,\Zh^+) \otimes (\pi_r^0, {\cal H}^+)
$$
defined on pure tensors by
$$
\phi_1(Z_1) \otimes \phi_2(Z_2) \otimes \phi_3(Z_3) \mapsto
(\phi_1 \cdot \phi_2)(T) \otimes \phi_3(T').
$$
In particular, the operator $\Lambda$ annihilates
$(\Zh^-_1 \oplus \Zh^0) \otimes {\cal H}^+$,
and its image lies in $\Zh^+ \otimes {\cal H}^+$.


Next we compute the action of $\Lambda$ on the generators of $\mathfrak{V}_n$.

\begin{lem}  \label{Lambda-lem}
We have: $\Lambda(1 \otimes 1) = 1 \otimes 1$ and
$$
\Lambda: (z_{11}-z'_{11})^n \: \mapsto \: \frac{(-1)^n}{n+1}(t_{11}-t'_{11})^n,
\qquad n \ge 1.
$$
\end{lem}

\begin{proof}
It is clear that $\Lambda(1 \otimes 1) = 1 \otimes 1$, so let us assume
$n \ge 1$.
From the description of $\Lambda$ as a composition of four mappings
and Lemma \ref{z^p}, it follows that $\Lambda$ maps
$$
(z_{11}-z'_{11})^n = \sum_{k=0}^n (-1)^{k} \begin{pmatrix} n \\ k \end{pmatrix}
(z_{11})^{n-k} (z'_{11})^k
$$
into
\begin{multline*}
\sum_{k=0}^n \sum_{p=0}^{n-k} \frac{(-1)^k}{n-k+1}
\begin{pmatrix} n \\ k \end{pmatrix} (t_{11})^{k+p} (t'_{11})^{n-k-p}
= \sum_{k=0}^n \sum_{p=0}^{n-k} \frac{(-1)^k}{n+1}
\begin{pmatrix} n+1 \\ k \end{pmatrix} (t_{11})^{k+p} (t'_{11})^{n-k-p}  \\
= \frac1{n+1}\sum_{r=0}^n \sum_{k=0}^r (-1)^k
\begin{pmatrix} n+1 \\ k \end{pmatrix} (t_{11})^r (t'_{11})^{n-r}  \\
= \frac1{n+1}\sum_{r=0}^n (-1)^r \begin{pmatrix} n \\ r \end{pmatrix}
(t_{11})^r (t'_{11})^{n-r}
= \frac{(-1)^n}{n+1} (t_{11}-t'_{11})^n,
\end{multline*}
where we used an identity
$$
\sum_{k=0}^r (-1)^k \begin{pmatrix} n+1 \\ k \end{pmatrix}
= (-1)^r \begin{pmatrix} n \\ r \end{pmatrix}
$$
which can be easily proved by induction (see formula 0.15(4) in \cite{GR}).
\end{proof}

Since, $\Lambda$ is $\mathfrak{gl}(2,\HC)$-equivariant and maps the generator
of each $\mathfrak{V}_n$ into $\lambda_n$ multiple of itself, $\Lambda$ must
act by multiplication by $\lambda_n$ on the whole $\mathfrak{V}_n$.
\end{proof}

As immediate consequences of this proposition and (\ref{Lambda-cubed})
we obtain:

\begin{cor}
The subrepresentation $(\Zh^+ \otimes {\cal H}^+)_1$ is a direct sum of
$\mathfrak{V}_n$'s:
$$
(\Zh^+ \otimes {\cal H}^+)_1 = \bigoplus_{n=1}^{\infty} \mathfrak{V}_n.
$$
\end{cor}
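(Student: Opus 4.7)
\medskip

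\noindent\textbf{Proof proposal.} By definition $(\Zh^+ \otimes {\cal H}^+)_1 = \sum_{n=1}^{\infty} \mathfrak{V}_n$, so the only thing to verify is that the sum is direct. The plan is to use the eigenvalue information for $\Lambda$ furnished by Proposition \ref{Lambda-prop} together with a Vandermonde-type argument.

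First I would record the key input. By Proposition \ref{Lambda-prop}, for every $n \ge 1$ the operator $\Lambda$ preserves $\mathfrak{V}_n$ and acts on it as multiplication by the scalar
$$
\lambda_n = \begin{cases} 1 & n=1, \\ (-1)^{n+1}/n & n \ge 2. \end{cases}
$$
The crucial elementary observation is that these scalars are pairwise distinct: $\lambda_1=1,\ \lambda_2=-\tfrac12,\ \lambda_3=\tfrac13,\ \lambda_4=-\tfrac14,\dots$, with $|\lambda_n|=1/n$ strictly decreasing for $n\ge 2$ and $\lambda_1$ distinguished by being equal to $1$.

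Next I would prove directness by contradiction. Suppose some nontrivial finite relation
$$
v_{n_1}+v_{n_2}+\dots+v_{n_k}=0, \qquad v_{n_i}\in \mathfrak{V}_{n_i},\ n_1<n_2<\dots<n_k,
$$
holds, with not all $v_{n_i}$ zero. Applying $\Lambda^{j}$ for $j=0,1,\dots,k-1$ yields the system
$$
\sum_{i=1}^{k} \lambda_{n_i}^{\,j}\, v_{n_i} = 0, \qquad j=0,1,\dots,k-1.
$$
Since the $\lambda_{n_i}$ are distinct, the $k\times k$ Vandermonde matrix $\bigl(\lambda_{n_i}^{\,j}\bigr)$ is invertible, and inverting it produces each $v_{n_i}=0$, contradicting the assumption. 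Hence $(\Zh^+ \otimes {\cal H}^+)_1=\bigoplus_{n=1}^{\infty}\mathfrak{V}_n$.

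I do not anticipate a serious obstacle: the content of the corollary is essentially repackaging the proposition, and the Vandermonde step is standard. The only mild subtlety is checking that the eigenvalues $\lambda_n$ are genuinely pairwise distinct, which is immediate from their explicit form. One could alternatively phrase the same argument spectrally by decomposing $(\Zh^+\otimes {\cal H}^+)_1$ into the eigenspaces of $\Lambda$ and noting that $\mathfrak{V}_n$ is contained in the $\lambda_n$-eigenspace; distinctness of eigenvalues then forces the sum to be direct.
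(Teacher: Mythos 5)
Your proof is correct and matches the paper's intent: the paper states this corollary as an ``immediate consequence'' of Proposition \ref{Lambda-prop}, the implicit argument being exactly yours --- the $\mathfrak{V}_n$ lie in eigenspaces of $\Lambda$ for the pairwise distinct eigenvalues $\lambda_n$, so their sum is automatically direct. The Vandermonde formulation is a standard way of making that explicit and introduces no gap.
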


\begin{thm}  \label{main2-thm}
The operator $\tilde L^{(2)}$ annihilates
$(\Zh^-_1 \oplus \Zh^0) \otimes {\cal H}^+$,
and its image lies in $\Zh^+ \otimes {\cal H}^+$.
If $x \in (\Zh^+ \otimes {\cal H}^+)_1$ belongs to
$\mathfrak{V}_n$ -- the subrepresentation of $\Zh^+ \otimes {\cal H}^+$
generated by $(z_{11}-z'_{11})^{n-1}$ -- then
$$
\tilde L^{(2)}(x) = \tilde\lambda_n x, \qquad \text{where} \qquad
\tilde\lambda_n = \lambda_n^3 =
\begin{cases}
1 & \text{if $n=1$;} \\
\frac{(-1)^{n+1}}{n^3} & \text{if $n \ge 2$.}
\end{cases}
$$
\end{thm}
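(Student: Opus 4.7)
The plan is to derive Theorem \ref{main2-thm} by iterating Proposition \ref{Lambda-prop} three times, using the factorization $\tilde L^{(2)} = \Lambda \circ \Lambda \circ \Lambda$ recorded in equation (\ref{Lambda-cubed}). All three assertions of the theorem then reduce to the corresponding assertions for the single operator $\Lambda$.

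First I would handle the vanishing and image statements. Proposition \ref{Lambda-prop} already tells us that $\Lambda$ annihilates $(\Zh^-_1 \oplus \Zh^0) \otimes {\cal H}^+$, so the composition $\Lambda^3$ does as well. Similarly, the image of $\Lambda$ is contained in $\Zh^+ \otimes {\cal H}^+$, which forces the image of $\tilde L^{(2)}$ into the same space.

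For the eigenvalue calculation on $\mathfrak{V}_n \subset (\Zh^+ \otimes {\cal H}^+)_1$, Proposition \ref{Lambda-prop} asserts that $\Lambda$ acts on $\mathfrak{V}_n$ as the scalar $\lambda_n$. In particular $\Lambda$ preserves $\mathfrak{V}_n$, which legitimises applying it three times in succession; the composition then acts on $\mathfrak{V}_n$ as multiplication by $\lambda_n^3$. A quick numerical check confirms $\lambda_n^3 = \tilde\lambda_n$: for $n=1$ we have $1^3 = 1$, and for $n \geq 2$ we compute
$$
\lambda_n^3 = \Bigl( \frac{(-1)^{n+1}}{n} \Bigr)^3 = \frac{(-1)^{3(n+1)}}{n^3} = \frac{(-1)^{n+1}}{n^3},
$$
since cubing preserves the sign of $\pm 1$.

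There is no genuine obstacle in this step: the entire content of Theorem \ref{main2-thm} is a bookkeeping consequence of Proposition \ref{Lambda-prop} combined with the decomposition of the zig-zag diagram into three elementary zig-zags (Figure \ref{zig-zag-decomposition}). All the representation-theoretic and analytic work has already been carried out upstream, in particular in identifying $\Lambda$ with the composition of projection onto $\Zh^+$, embedding via $I$, and the multiplication map, and in computing the scalars $\lambda_n$ on the generators $(z_{11}-z'_{11})^{n-1}$ in Lemma \ref{Lambda-lem}.
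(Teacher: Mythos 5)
Your proposal is correct and matches the paper's own derivation: the paper presents Theorem \ref{main2-thm} precisely as an immediate consequence of Proposition \ref{Lambda-prop} together with the factorization $\tilde L^{(2)} = \Lambda \circ \Lambda \circ \Lambda$ from (\ref{Lambda-cubed}), which is exactly the argument you give, including the verification that $\lambda_n^3 = \tilde\lambda_n$.
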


\section{The Two-Loop Ladder Diagram and
$(\pi^0_l, {\cal H}^+) \otimes (\pi^0_r, {\cal H}^+)$}  \label{main-section}

In this section we combine the results we obtained so far to compute
the effect of the integral operator $L^{(2)}$ on
$(\pi^0_l, {\cal H}^+) \otimes (\pi^0_r, {\cal H}^+)$.
(Recall that $L^{(2)}$ is the operator corresponding to the two-loop ladder
diagram.)

\begin{thm}  \label{main-thm}
The image of the operator $L^{(2)}$ lies in ${\cal H}^+ \otimes {\cal H}^+$,
and the map
\begin{equation}  \label{L2-2}
L^{(2)} : (\pi^0_l, {\cal H}^+) \otimes (\pi^0_r, {\cal H}^+) \to
(\pi^0_l, {\cal H}^+) \otimes (\pi^0_r, {\cal H}^+)
\end{equation}
is $\mathfrak{gl}(2,\HC)$-equivariant.
If $x \in (\pi^0_l, {\cal H}^+) \otimes (\pi^0_r, {\cal H}^+)$
belongs to an irreducible component isomorphic to
$(\rho_n,\Zh^+ \otimes \BB C^{n \times n})$
in the decomposition (\ref{tensor-decomp}), then
$$
L^{(2)}(x) = \mu_n x, \qquad \text{where} \qquad
\mu_n =
\begin{cases}
1 & \text{if $n=1$;} \\
\frac{(-1)^{n+1}}{n(n-1)} & \text{if $n \ge 2$.}
\end{cases}
$$
\end{thm}

\begin{proof}
First, we prove a lemma analogous to Lemma \ref{Lambda-lem}.

\begin{lem}  \label{main-lem}
We have: $L^{(2)} (1 \otimes 1) = 1 \otimes 1$ and
$$
L^{(2)}: (z_{11}-z'_{11})^n \: \mapsto \: \frac{(-1)^n}{n(n+1)}(w_{11}-w'_{11})^n,
\qquad n \ge 1.
$$
\end{lem}

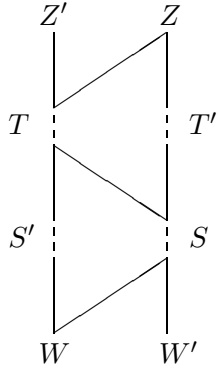
\begin{figure}
\begin{center}
\setlength{\unitlength}{1mm}
\begin{picture}(20,48)
\multiput(2,4)(0,15){3}{\line(0,1){10}}
\multiput(17,4)(0,15){3}{\line(0,1){10}}
\put(2,4){\line(3,2){15}}
\put(2,34){\line(3,2){15}}
\put(2,29){\line(3,-2){15}}

\multiput(2,15)(0,2){2}{\line(0,1){1}}
\multiput(17,15)(0,2){2}{\line(0,1){1}}
\multiput(2,30)(0,2){2}{\line(0,1){1}}
\multiput(17,30)(0,2){2}{\line(0,1){1}}

\put(0,45){$Z'$}
\put(16,45){$Z$}
\put(16,0){$W'$}
\put(0,0){$W$}
\put(20,30.5){$T'$}
\put(-4,30.5){$T$}
\put(20,15.5){$S$}
\put(-4,15.5){$S'$}
\end{picture}
\end{center}
\caption{Variable labeling.}
\label{labeling}
\end{figure}

\begin{proof}
We label the variables in the diagram describing $\tilde l^{(2)}$ as in
Figure \ref{labeling}.
First we compute $L^{(2)} (1 \otimes 1)$.
Using relations (\ref{LL1})-(\ref{LL2}) and the fact that $\tilde L^{(2)}$
annihilates $(\Zh_1^- \oplus \Zh^0) \otimes {\cal H}^+$, we obtain:
\begin{multline*}
L^{(2)} (1 \otimes 1)
= \mathring{L}^{(2)} \bigl( N(Z)^{-1} \cdot \degt_Z (1 \otimes 1) \bigr)
= \mathring{L}^{(2)} \bigl( N(Z)^{-1} \bigr)  \\
= \tilde L^{(2)} \biggl( \frac{N(Z-W)}{N(Z)} \biggr)
= \tilde L^{(2)} \biggl(
1 + \frac{N(W)}{N(Z)} - \frac{\tr(ZW^+)}{N(Z)} \biggr)
= \tilde L^{(2)} (1) = 1 \otimes 1.
\end{multline*}

Next we compute $L^{(2)} \bigl( (z_{11}-z'_{11})^n \bigr)$.
Let us introduce a notation
$$
\alpha_n(Z,Z') = N(Z)^{-1} \cdot \degt_Z \bigl( (z_{11}-z'_{11})^n \bigr),
$$
then, by (\ref{LL1})-(\ref{LL2}),
$$
L^{(2)} \bigl( (z_{11}-z'_{11})^n \bigr)
= \tilde L^{(2)} \bigl( N(Z-W) \cdot \alpha_n(Z,Z') \bigr).
$$
Observe that
$$
N(Z-W) = N(Z) - \tr(ZW^+) + N(W)
$$
and only the terms
$$
\tilde L^{(2)} \bigl( N(Z) \cdot \alpha_n(Z,Z') \bigr)
\qquad \text{and} \qquad
\tilde L^{(2)} \bigl( z_{22}w_{11} \cdot \alpha_n(Z,Z') \bigr)
$$
can potentially be non-zero -- all other terms belong to
$(\Zh_1^- \oplus \Zh^0) \otimes {\cal H}^+$ and thus annihilated by
$\tilde L^{(2)}$.
We have:
\begin{multline*}
N(Z) \cdot \alpha_n(Z,Z') = \degt_Z \bigl( (z_{11}-z'_{11})^n \bigr) \\
= (z_{11}-z'_{11})^n + \sum_{k=0}^n
(-1)^{n-k} \begin{pmatrix} n \\ k \end{pmatrix} k (z_{11})^k (z'_{11})^{n-k}  \\
= (z_{11}-z'_{11})^n + n z_{11} \sum_{l=0}^{n-1}
(-1)^{n-l-1} \begin{pmatrix} n-1 \\ l \end{pmatrix} (z_{11})^l (z'_{11})^{n-l-1} \\
= (z_{11}-z'_{11})^n + n z_{11} (z_{11}-z'_{11})^{n-1} \\
= (n+1)(z_{11}-z'_{11})^n + n z'_{11} (z_{11}-z'_{11})^{n-1}.
\end{multline*}
Then, using (\ref{Lambda-cubed}) and Proposition \ref{Lambda-prop},
\begin{multline}  \label{last1}
\tilde L^{(2)} \bigl( N(Z) \cdot \alpha_n(Z,Z') \bigr) =
\tilde L^{(2)} \bigl( (n+1)(z_{11}-z'_{11})^n + n z'_{11} (z_{11}-z'_{11})^{n-1} \bigr)
\\
= (-1)^n (\Lambda\circ\Lambda)
\bigl( (t_{11}-t'_{11})^n - t_{11}(t_{11}-t'_{11})^{n-1} \bigr)
= (-1)^{n+1} (\Lambda \circ \Lambda) \bigl( t'_{11}(t_{11}-t'_{11})^{n-1} \bigr) \\
= \frac1n \Lambda \bigl( s_{11} (s_{11}-s'_{11})^{n-1} \bigr)
= \frac1n \Lambda \bigl( (s_{11}-s'_{11})^n + s'_{11} (s_{11}-s'_{11})^{n-1} \bigr) \\
= \frac{(-1)^n}{n(n+1)}(w_{11}-w'_{11})^n
+ \frac{(-1)^{n-1}}{n^2}w_{11}(w_{11}-w'_{11})^{n-1}.
\end{multline}

Finally, we compute
$\tilde L^{(2)} \bigl( z_{22}w_{11} \cdot \alpha_n(Z,Z') \bigr)$:
$$
z_{22}w_{11} \cdot \alpha_n(Z,Z')
= \frac{z_{22}w_{11}}{N(Z)} \sum_{k=0}^n (-1)^{n-k}
\begin{pmatrix} n \\ k \end{pmatrix} (k+1) (z_{11})^k (z'_{11})^{n-k}.
$$
Since the terms in $(\Zh_1^- \oplus \Zh^0) \otimes {\cal H}^+$ are
annihilated by $\tilde L^{(2)}$, we can drop them.
By (\ref{Z^+f}), modulo terms in $(\Zh_1^- \oplus \Zh^0) \otimes {\cal H}^+$,
\begin{multline*}
z_{22}w_{11} \cdot \alpha_n(Z,Z') \equiv
w_{11} \sum_{k=0}^n (-1)^{n-k} \begin{pmatrix} n \\ k \end{pmatrix}
k (z_{11})^{k-1} (z'_{11})^{n-k}  \\
= n w_{11} \sum_{l=0}^{n-1} (-1)^{n-l-1} \begin{pmatrix} n-1 \\ l \end{pmatrix}
(z_{11})^l (z'_{11})^{n-l-1}
= nw_{11} (z_{11}-z'_{11})^{n-1},
\end{multline*}
and by Theorem \ref{main2-thm},
\begin{equation}  \label{last2}
\tilde L^{(2)} \bigl( z_{22}w_{11} \cdot \alpha_n(Z,Z') \bigr) =
\tilde L^{(2)} \bigl( nw_{11} (z_{11}-z'_{11})^{n-1} \bigr)
= \frac{(-1)^{n-1}}{n^2}w_{11}(w_{11}-w'_{11})^{n-1}.
\end{equation}
Combining (\ref{last1}) and (\ref{last2}) finishes the proof.
\end{proof}

We have yet to establish that the operator $L^{(2)}$ is
$\mathfrak{gl}(2,\HC)$-equivariant.
For this reason we cannot proceed exactly as in the proof of
Proposition \ref{Lambda-prop}.
Let $\mathfrak{V} \subset \Zh \otimes {\cal H}^+$ denote the
subrepresentation of $(\varpi_2, \Zh) \otimes (\pi^0_r, {\cal H}^+)$
generated by
$$
\Bigl\{ N(Z)^{-1} \cdot \degt_Z \bigl( (z_{ij}-z'_{ij})^n \bigr);\:
n=0,1,2,3,\dots \Bigr\}.
$$
Thus we have a surjective $\mathfrak{gl}(2,\HC)$-equivariant map
$$
\mathring{L}^{(2)} : (\varpi_2 \otimes \pi^0_r, \mathfrak{V})
\twoheadrightarrow (\pi^0_l, {\cal H}^+) \otimes (\pi^0_r, {\cal H}^+).
$$

\begin{lem}
The operator $\mathring{L}^{(2)}$ annihilates
$\mathfrak{V} \cap (\Zh^+ \otimes {\cal H}^+)$.
\end{lem}

\begin{proof}
Observe that the operator $\mathring{L}^{(2)}$ increases the total
degree of an element of $\Zh \otimes {\cal H}^+$ by 2
(essentially because it involves multiplication by $N(Z-W)$).
Now, suppose that there exists an element
$x \in \mathfrak{V} \cap (\Zh^+ \otimes {\cal H}^+)$ such that
$\mathring{L}^{(2)}(x) \ne 0$. Since $\mathring{L}^{(2)}$ is
$\mathfrak{gl}(2,\HC)$-equivariant, without loss of generality we can
assume that $\mathring{L}^{(2)}(x)$ belongs to one of the irreducible components
of $(\pi^0_l, {\cal H}^+) \otimes (\pi^0_r, {\cal H}^+)$.
Furthermore, we may assume that
$$
\mathring{L}^{(2)}(x) = (z_{ij}-z'_{ij})^n \qquad \text{for some }
x \in \mathfrak{V} \cap (\Zh^+ \otimes {\cal H}^+),\: n=0,1,2,\dots.
$$
Since $(z_{ij}-z'_{ij})^n$ is homogeneous of degree $n$, only the homogeneous
component $x'$ of degree $n-2$ of $x$ contributes anything to
$\mathring{L}^{(2)}(x)$, and $x' \in \Zh^+ \otimes {\cal H}^+$.

Now, let us regard $\mathring{L}^{(2)}$ as a $U(2) \times U(2)$ equivariant
map $(\varpi_2, \Zh^+) \otimes (\pi^0_l, {\cal H}^+) \to
(\pi^0_l, \Zh^+) \otimes (\pi^0_r, {\cal H}^+)$.
We have:
$$
\mathring{L}^{(2)}(x') = (z_{ij}-z'_{ij})^n \quad
\in V_{\frac{n}2} \boxtimes V_{\frac{n}2}.
$$
Since the degree of $x'$ is $n-2$,
$$
x' \in \bigoplus_{\genfrac{}{}{0pt}{}{2l+2k+2l'=n-2}{k,l,l' \ge 0}}
N(Z)^k (V_l \boxtimes V_l) \otimes (V_{l'} \boxtimes V_{l'}).
$$
But $V_{l'} \otimes V_l$ does not contain $V_{\frac{n}2}$ unless $l+l' \ge n/2$,
which produces a contradiction.
\end{proof}

On the other hand, since $\tilde L^{(2)}$ annihilates
$(\Zh_1^- \oplus \Zh^0) \otimes {\cal H}^+$, by (\ref{LL1}) and (\ref{Z^+f}),
$\mathring{L}^{(2)}$ also annihilates $I^-_2 \otimes {\cal H}^+$.
Therefore, $\mathring{L}^{(2)}$ descends to a well-defined
$\mathfrak{gl}(2,\HC)$-equivariant quotient map
\begin{equation}  \label{surjective}
\frac{\mathfrak{V}}{\mathfrak{V} \cap ((I^-_2 \oplus \Zh^+) \otimes {\cal H}^+)}
\twoheadrightarrow {\cal H}^+ \otimes {\cal H}^+.
\end{equation}
Clearly, this quotient space is a $\mathfrak{gl}(2,\HC)$-invariant subspace of 
$(\Zh/(I^-_2 \oplus \Zh^+)) \otimes {\cal H}^+$.
Combining the fact that the map (\ref{surjective}) is surjective and
Proposition \ref{quotient-prop}, we obtain the following isomorphisms of
representations of $\mathfrak{gl}(2,\HC)$:
$$
\biggl( \varpi_2 \otimes \pi^0_r, \frac{\mathfrak{V}}{\mathfrak{V} \cap
((I^-_2 \oplus \Zh^+) \otimes {\cal H}^+)} \biggr) \simeq
\biggl( \varpi_2, \frac{\Zh}{I^-_2 \oplus \Zh^+} \biggr)
\otimes (\pi^0_r, {\cal H}^+)
\simeq (\pi^0_l, {\cal H}^+) \otimes (\pi^0_r, {\cal H}^+).
$$
We conclude that the operator $L^{(2)}$ has image in
${\cal H}^+ \otimes {\cal H}^+$
and the map (\ref{L2-2}) is indeed $\mathfrak{gl}(2,\HC)$-equivariant.
Finally, to prove the assertion about the action of $L^{(2)}$ on the
irreducible components of $(\pi^0_l, {\cal H}^+) \otimes (\pi^0_r, {\cal H}^+)$,
it is sufficient to show $L^{(2)}(x_n) = \mu_n x_n$ for some suitably chosen
generators $x_n$ of each $(\rho_n,\Zh^+ \otimes \BB C^{n \times n})$,
and this was established in Lemma \ref{main-lem}.
\end{proof}

We have the following symmetry property for the operator $L^{(2)}$,
which is a direct analogue of equation (8) in \cite{DHSS}.

\begin{lem}  \label{symmetry-lem}
The operator $L^{(2)}: (\pi^0_l, {\cal H}^+) \otimes (\pi^0_r, {\cal H}^+)
\to (\pi^0_l, {\cal H}^+) \otimes (\pi^0_r, {\cal H}^+)$
has the following symmetry:
$$
L^{(2)} (\phi_1 \otimes \phi_2)(W_1,W_2) = L^{(2)} (\phi_2 \otimes \phi_1)(W_2,W_1),
\qquad \phi_1,\phi_2 \in {\cal H}^+.
$$
\end{lem}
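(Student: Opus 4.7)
The plan is to deduce the symmetry from Theorem \ref{main-thm} together with a Schur's lemma argument applied to the multiplicity-free decomposition (\ref{tensor-decomp}). Let $\tau$ denote the linear involution $\phi_1 \otimes \phi_2 \mapsto \phi_2 \otimes \phi_1$ on ${\cal H}^+ \otimes {\cal H}^+$. Viewing elements as functions of $(W_1, W_2)$, we have $\tau(F)(W_1, W_2) = F(W_2, W_1)$, so the lemma is exactly the assertion that $\tau$ commutes with $L^{(2)}$.

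The key structural observation is that $\tau$ is $\mathfrak{gl}(2, \HC)$-equivariant as an endomorphism of $(\pi^0_l, {\cal H}^+) \otimes (\pi^0_r, {\cal H}^+)$. Equivariance under $\mathfrak{sl}(2, \HC)$ is immediate because $\pi^0_l$ and $\pi^0_r$ coincide there, so $\tau$ simply swaps two copies of the same module. For the central direction, a direct calculation from the defining formulas of $\pi^0_l$ and $\pi^0_r$ shows that $\lambda I_2 \in GL(2, \HC)$ acts on ${\cal H}^+$ as multiplication by $\lambda^2$ via $\pi^0_l$ and by $\lambda^{-2}$ via $\pi^0_r$; both being scalar operators (which in fact cancel on the tensor product, so the center acts trivially there), they automatically commute with $\tau$. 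Thus $\tau$ commutes with the full $\mathfrak{gl}(2, \HC)$-action.

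By Theorem \ref{main-thm}, $L^{(2)}$ acts as the scalar $\mu_n$ on each irreducible component $(\rho_n, \Zh^+ \otimes \BB C^{n \times n})$ of the multiplicity-free decomposition (\ref{tensor-decomp}). Schur's lemma then forces the equivariant endomorphism $\tau$ to act as a scalar on each component as well; one may verify that this scalar is $(-1)^{n-1}$ by applying $\tau$ to the generator $(z_{11} - z'_{11})^{n-1}$, although the specific value is not needed. Since scalar operators on the same subspace commute, $L^{(2)} \tau = \tau L^{(2)}$ holds component-by-component, hence on all of ${\cal H}^+ \otimes {\cal H}^+$. The main obstacle is really just the equivariance check for $\tau$: the actions $\pi^0_l$ and $\pi^0_r$ genuinely differ on the center, so the naive swap argument requires the scalar-action calculation above. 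A direct integral-change-of-variables proof appears harder, because the defining integral for $L^{(2)}$ treats $Z_1 \in U(2)_{R_1}$ (volume integral) and $Z_2 \in S^3_{R_2}$ (surface integral) asymmetrically, which is precisely the asymmetry the Schur argument bypasses.
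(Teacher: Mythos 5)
Your proof is correct and follows essentially the same route as the paper: the paper's two-sentence proof checks the symmetry directly on the generators $(z_{11}-z'_{11})^n$ via Lemma \ref{main-lem} and then extends by $\mathfrak{gl}(2,\HC)$-equivariance, which is just an unpacked form of your Schur-lemma argument on the multiplicity-free decomposition (\ref{tensor-decomp}) where both $L^{(2)}$ and the swap $\tau$ act by scalars on each irreducible component. Your explicit verification that $\tau$ is equivariant --- including on the center, where $\pi^0_l$ and $\pi^0_r$ genuinely differ --- is a worthwhile detail that the paper's terse proof leaves implicit.
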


\begin{proof}
Clearly, this property is true for the generators $(z_{11}-z'_{11})^n$,
$n \ge 0$, of ${\cal H}^+ \otimes {\cal H}^+$.
Therefore, by the $\mathfrak{gl}(2,\HC)$-equivariance of $L^{(2)}$,
it is true for all elements of ${\cal H}^+ \otimes {\cal H}^+$.
\end{proof}

\separate

\separate

\noindent
{\em Department of Mathematics, Indiana University,
Rawles Hall, 831 East 3rd St, Bloomington, IN 47405}

\end{document}